\documentclass[pdflatex,sn-mathphys-num]{sn-jnl}
\usepackage{graphicx} 
\usepackage{bm}
\usepackage{mathtools,amssymb}
\usepackage{physics}
\usepackage{mleftright}
\usepackage{chngcntr}
\counterwithin{footnote}{section}

\numberwithin{equation}{section}
\usepackage{doi}
\usepackage{amsthm}
\theoremstyle{thmstyleone}
\newtheorem{thm}{Theorem}[section]
\newtheorem{cor}[thm]{Corollary}
\newtheorem{lem}[thm]{Lemma}
\newtheorem{prop}[thm]{Proposition}
\theoremstyle{thmstyletwo}

\newtheorem{rem}{Remark}[section]
\theoremstyle{thmstylethree}
\newtheorem{dfn}{Definition}[section]
\newcommand{\Iwade}{Smooth-Convex-Concave Splitting Scheme}
\newcommand{\set}[2]{\left\{\,#1\,\middle|\,#2\,\right\}}
\newcommand{\cc}{c}
\newcommand{\cx}{\cc_x}
\newcommand{\cy}{\cc_y}
\newcommand{\cz}{\cc_z}
\newcommand{\bigkakko}[3]{\left[#1\right]_{#2}^{#3}}
\newcommand{\delpi}{{\delta_{i}^{+}}}
\newcommand{\delpj}{{\delta_{j}^{+}}}
\newcommand{\delpk}{{\delta_{k}^{+}}}
\newcommand{\delpifunc}[1]{\delta_{i}^{+}\mleft(#1\mright)}
\newcommand{\delpjfunc}[1]{\delta_{j}^{+}\mleft(#1\mright)}
\newcommand{\delpkfunc}[1]{\delta_{k}^{+}\mleft(#1\mright)}
\newcommand{\delmi}{\delta_{i}^{-}}
\newcommand{\delmj}{\delta_{j}^{-}}
\newcommand{\delmk}{\delta_{k}^{-}}
\newcommand{\delmifunc}[1]{\delta_{i}^{-}\mleft(#1\mright)}
\newcommand{\delmjfunc}[1]{\delta_{j}^{-}\mleft(#1\mright)}
\newcommand{\delmkfunc}[1]{\delta_{k}^{-}\mleft(#1\mright)}
\newcommand{\deli}{\delta_{i}^{\langle 1\rangle}}
\newcommand{\delj}{\delta_{j}^{\langle 1\rangle}}
\newcommand{\delk}{\delta_{k}^{\langle 1\rangle}}

\newcommand{\delii}{{\delta_{i}^{\langle 2\rangle}}}
\newcommand{\deljj}{{\delta_{j}^{\langle 2\rangle}}}
\newcommand{\delkk}{{\delta_{k}^{\langle 2\rangle}}}
\newcommand{\deltwo}{{\Delta_{\mathrm{d}}}}
\newcommand{\delfour}{{\deltwo^{2}}}
\newcommand{\Dt}{\Delta t}
\newcommand{\Dx}{\Delta x}
\newcommand{\Dy}{\Delta y}
\newcommand{\Dz}{\Delta z}
\newcommand{\LL}{L}
\newcommand{\Lx}{\ell_x}
\newcommand{\Ly}{\ell_y}
\newcommand{\Lz}{\ell_z}
\newcommand{\NN}{N}
\newcommand{\Nx}{\NN_x}
\newcommand{\Ny}{\NN_y}
\newcommand{\Nz}{\NN_z}
\newcommand{\aaa}{0}
\newcommand{\ax}{0}
\newcommand{\ay}{0}
\newcommand{\az}{0}
\newcommand{\bb}{\NN}
\newcommand{\bx}{\Nx}
\newcommand{\by}{\Ny}
\newcommand{\bz}{\Nz}
\newcommand{\vv}{v}
\newcommand{\vfunc}[3]{\vv\mleft(#1,#2,#3\mright)}
\newcommand{\uu}{u}
\newcommand{\ufunc}[4]{\uu\mleft(#1,#2,#3,#4\mright)}
\newcommand{\uijkn}[4]{{\uu_{{#1},{#2},{#3}}^{(#4)}}}
\newcommand{\dudtn}[1]{{\left.\pdv{\uu}{t}\right|^{(#1)}}}
\newcommand{\dudtijkn}[4]{{\left.\pdv{\uu}{t}\right|_{{#1},{#2},{#3}}^{(#4)}}}
\newcommand{\Deltaun}[1]{{\left.\Delta \uu\right|^{(#1)}}}
\newcommand{\Deltauijkn}[4]{{\left.\Delta \uu\right|_{{#1},{#2},{#3}}^{(#4)}}}
\newcommand{\Deltatwoun}[1]{{\left.\Delta^{2}\uu\right|^{(#1)}}}
\newcommand{\Deltatwouijkn}[4]{{\left.\Delta^{2}\uu\right|_{{#1},{#2},{#3}}^{(#4)}}}
\newcommand{\UU}{U}
\newcommand{\Ubar}{\bar{\UU}}
\newcommand{\Util}{\tilde{\UU}}
\newcommand{\Uhat}{\hat{\UU}}
\newcommand{\Ubarijk}[3]{\Ubar_{{#1},{#2},{#3}}}
\newcommand{\Uijk}[3]{\UU_{{#1},{#2},{#3}}}
\newcommand{\Uijkn}[4]{{\UU_{{#1},{#2},{#3}}^{(#4)}}}
\newcommand{\eijkn}[4]{{e_{{#1},{#2},{#3}}^{(#4)}}}
\newcommand{\Un}[1]{{\UU^{(#1)}}}
\newcommand{\Um}[1]{{\UU^{\llbracket#1\rrbracket}}}
\newcommand{\Utiln}[1]{{\Util^{(#1)}}}
\newcommand{\Uhatn}[1]{{\Uhat^{(#1)}}}
\newcommand{\un}[1]{{\uu^{(#1)}}}
\newcommand{\unn}[1]{{\uu^{[#1]}}}
\newcommand{\unnfunc}[4]{{\uu^{[#4]}\mleft(#1,#2,#3\mright)}}
\newcommand{\en}[1]{{e^{(#1)}}}
\newcommand{\fF}{f}

\newcommand{\fstar}{\fF^{*}}
\newcommand{\gG}{g}

\newcommand{\hhhh}{h}
\newcommand{\fii}[1]{{\fF_{#1}}}

\newcommand{\fijk}[3]{{\fF_{{#1},{#2},{#3}}}}
\usepackage{stmaryrd}

\newcommand{\gijk}[3]{{\gG_{{#1},{#2},{#3}}}}
\newcommand{\deliUU}[1]{\left.\delta_{i}^{\langle 1\rangle}\Uijk{i}{j}{k}\right|_{i=#1}}
\newcommand{\deljUU}[1]{\left.\delta_{j}^{\langle 1\rangle}\Uijk{i}{j}{k}\right|_{j=#1}}
\newcommand{\delkUU}[1]{\left.\delta_{k}^{\langle 1\rangle}\Uijk{i}{j}{k}\right|_{k=#1}}
\newcommand{\deliU}[1]{\left.\delta_{i}^{\langle 1\rangle}\Uijkn{i}{j}{k}{n}\right|_{i=#1}}
\newcommand{\deljU}[1]{\left.\delta_{j}^{\langle 1\rangle}\Uijkn{i}{j}{k}{n}\right|_{j=#1}}
\newcommand{\delkU}[1]{\left.\delta_{k}^{\langle 1\rangle}\Uijkn{i}{j}{k}{n}\right|_{k=#1}}
\newcommand{\deliu}[1]{\left.\delta_{i}^{\langle 1\rangle}\uijkn{i}{j}{k}{n}\right|_{i=#1}}
\newcommand{\delju}[1]{\left.\delta_{j}^{\langle 1\rangle}\uijkn{i}{j}{k}{n}\right|_{j=#1}}
\newcommand{\delku}[1]{\left.\delta_{k}^{\langle 1\rangle}\uijkn{i}{j}{k}{n}\right|_{k=#1}}
\newcommand{\delidelijkUU}[1]{\left.\delta_{i}^{\langle 1\rangle}\deltwo\Uijk{i}{j}{k}\right|_{i=#1}}
\newcommand{\deljdelijkUU}[1]{\left.\delta_{j}^{\langle 1\rangle}\deltwo\Uijk{i}{j}{k}\right|_{j=#1}}
\newcommand{\delkdelijkUU}[1]{\left.\delta_{k}^{\langle 1\rangle}\deltwo\Uijk{i}{j}{k}\right|_{k=#1}}
\newcommand{\delidelijkU}[1]{\left.\delta_{i}^{\langle 1\rangle}\deltwo\Uijkn{i}{j}{k}{n}\right|_{i=#1}}
\newcommand{\deljdelijkU}[1]{\left.\delta_{j}^{\langle 1\rangle}\deltwo\Uijkn{i}{j}{k}{n}\right|_{j=#1}}
\newcommand{\delkdelijkU}[1]{\left.\delta_{k}^{\langle 1\rangle}\deltwo\Uijkn{i}{j}{k}{n}\right|_{k=#1}}
\newcommand{\delidelijku}[1]{\left.\delta_{i}^{\langle 1\rangle}\deltwo\uijkn{i}{j}{k}{n}\right|_{i=#1}}
\newcommand{\deljdelijku}[1]{\left.\delta_{j}^{\langle 1\rangle}\deltwo\uijkn{i}{j}{k}{n}\right|_{j=#1}}
\newcommand{\delkdelijku}[1]{\left.\delta_{k}^{\langle 1\rangle}\deltwo\uijkn{i}{j}{k}{n}\right|_{k=#1}}
\newcommand{\sumd}[2]{{\sum_{#1}^{#2}}\,\!''}
\newcommand{\nabladp}{\nabla_{\mathrm{d}}^{+}}
\newcommand{\nabladpfunc}[1]{\nabla_{\mathrm{d}}^{+}\mleft(#1\mright)}
\newcommand{\nabladm}{\nabla_{\mathrm{d}}^{-}}
\newcommand{\nabladmfunc}[1]{\nabla_{\mathrm{d}}^{-}\mleft(#1\mright)}
\newcommand{\normLd}[2]{\norm{#2}_{L_{\mathrm{d}}^{#1}}}
\newcommand{\DD}{\mathrm{D}}
\newcommand{\Ldtwo}{L_{\mathrm{d}}^{2}}
\newcommand{\productLdtwo}[2]{\left\langle\, #1\, ,\,#2\,\right\rangle_{\Ldtwo}}
\newcommand{\normLdtwo}[1]{\normLd{2}{#1}}
\newcommand{\nablaLdtwo}{\nabla_{\Ldtwo}}
\newcommand{\nuu}{\nu}
\newcommand{\nufunc}[1]{\nuu\mleft(#1\mright)}
\newcommand{\nuone}{\nuu_{1}}
\newcommand{\nuonefunc}[1]{\nuone\mleft(#1\mright)}
\newcommand{\nutwo}{\nuu_{2}}
\newcommand{\nutwofunc}[1]{\nutwo\mleft(#1\mright)}
\newcommand{\nuthree}{\nuu_{3}}
\newcommand{\nuthreefunc}[1]{\nuthree\mleft(#1\mright)}
\newcommand{\nufourn}[1]{\rho^{(#1)}}
\newcommand{\nufournfunc}[2]{\nufourn{#1}\mleft(#2\mright)}
\newcommand{\nusum}{\nuu_{\mathrm{total}}}
\newcommand{\nusumfunc}[1]{\nuu_{\mathrm{total}}\mleft(#1\mright)}
\newcommand{\nutilone}{\nabla_{L^{2}}\tilde{\nuu}_{1}}
\newcommand{\nutilonefunc}[1]{\nutilone\mleft(#1\mright)}
\newcommand{\nutiltwo}{\nabla_{L^{2}}\tilde{\nuu}_{2}}
\newcommand{\nutiltwofunc}[1]{\nutiltwo\mleft(#1\mright)}
\newcommand{\nutilthree}{\nabla_{L^{2}}\tilde{\nuu}_{3}}
\newcommand{\nutilthreefunc}[1]{\nutilthree\mleft(#1\mright)}
\newcommand{\phione}{\phi_{1}}
\newcommand{\phionefunc}[1]{\phione\mleft(#1\mright)}
\newcommand{\phitwo}{\phi_{2}}
\newcommand{\phitwofunc}[1]{\phitwo\mleft(#1\mright)}
\newcommand{\phithree}{\phi_{3}}
\newcommand{\phithreefunc}[1]{\phithree\mleft(#1\mright)}
\newcommand{\phifour}{\phi_{4}}
\newcommand{\phifourfunc}[1]{\phifour\mleft(#1\mright)}
\newcommand{\phionetilde}{\tilde{\phi}_{1}}
\newcommand{\phionetildefunc}[1]{\phionetilde\mleft(#1\mright)}
\newcommand{\phionehat}{\hat{\phi}_{1}}
\newcommand{\phionehatfunc}[1]{\phionehat\mleft(#1\mright)}
\newcommand{\psii}{\psi}
\newcommand{\psifunc}[1]{\psi\mleft(#1\mright)}
\newcommand{\psidfunc}[1]{\psi'\mleft(#1\mright)}
\newcommand{\psiddfunc}[1]{\psi''\mleft(#1\mright)}
\newcommand{\psihat}{\hat{\psii}}
\newcommand{\psihatfunc}[1]{\psihat\mleft(#1\mright)}
\newcommand{\xii}{\xi}
\newcommand{\xiijk}[3]{\xii_{{#1},{#2},{#3}}}
\newcommand{\Hfunc}[1]{H\mleft(#1\mright)}
\newcommand{\Htruefunc}[1]{H_{\mathrm{true}}\mleft(#1\mright)}
\newcommand{\Htilfunc}[1]{\tilde{H}\mleft(#1\mright)}
\newcommand{\zetaa}{\zeta}
\newcommand{\CC}{C}
\newcommand{\CCd}{C'}
\newcommand{\Ceez}{\CC_{\varepsilon,\eta,\zetaa}}
\newcommand{\CT}{\CC_{T}}
\newcommand{\CTd}{{\CC'_{T}}}
\newcommand{\CTdd}{{\CC''_{T}}}
\newcommand{\omegaa}{\omega}
\newcommand{\thetafunc}[1]{\theta\mleft(#1\mright)}
\newcommand{\nd}{n'}

\newcommand{\kappan}[1]{\kappa_{#1}}
\newcommand{\Lhat}{\hat{\LL}}
\newcommand{\rr}{r}
\newcommand{\iotaa}{\iota}
\newcommand{\iotaafunc}[1]{\iotaa\mleft(#1\mright)}
\newcommand{\iotaafuncijk}[4]{\left\{\iotaa\mleft(#1\mright)\right\}_{{#2},{#3},{#4}}}
\newcommand{\chii}{\chi}
\newcommand{\chimfunc}[2]{\chii^{\llbracket#1\rrbracket}\mleft(#2\mright)}
\newcommand{\gamman}[1]{{\gamma^{(#1)}}}
\newcommand{\gammatiln}[1]{{\tilde{\gamma}^{(#1)}}}

\newcommand{\Lambdaa}{\Lambda}

\newcommand{\mm}{m}

\newcommand{\Md}{{M'}}
\newcommand{\placeholder}{\,\cdot\,}
\makeatletter
\gdef\orcidlogo{\raisebox{0ex}{\includegraphics[height=2ex]{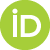}}}
\makeatother

\begin{document}

\title[A Mathematical Analysis of a \Iwade \ for the Swift--Hohenberg Equation]{A Mathematical Analysis of a \Iwade \ for the Swift--Hohenberg Equation}

\author*[1]{\fnm{Yuki} \sur{Yonekura}}\email{yonekurayuki00@gmail.com}

\author[1]{\fnm{Daiki} \sur{Iwade}}\email{iwadedaiki@gmail.com}

\author[2]{\fnm{Shun} \sur{Sato}~\orcid{https://orcid.org/0000-0002-8938-0825}}\email{shun\_sato@tmu.ac.jp}

\author[1]{\fnm{Takayasu} \sur{Matsuo}~\orcid{https://orcid.org/0009-0007-1336-3934}}\email{matsuo@mist.i.u-tokyo.ac.jp}

\affil*[1]{\orgdiv{Department of Mathematical Informatics}, \orgdiv{Graduate School of Information Science and Technology}, \orgname{The University of Tokyo}, \orgaddress{\street{7-3-1 Hongo, Bunkyo-ku}, \city{Tokyo}, \postcode{113-8656}, \country{Japan}}}

\affil[2]{\orgdiv{Department of Mathematical Sciences}, \orgdiv{Graduate School of Science}, \orgname{Tokyo Metropolitan University}, \orgaddress{\street{1-1 Minami-Osawa, Hachioji-shi}, \city{Tokyo}, \postcode{192-0397}, \country{Japan}}}

\abstract{
    The Swift--Hohenberg equation is a widely studied fourth-order model, originally proposed to describe hydrodynamic fluctuations. It admits an energy-dissipation law and, under suitable assumptions, bounded solutions. Many structure-preserving numerical schemes have been proposed to retain such properties; however, existing approaches are often fully implicit and therefore computationally expensive. We introduce a simple design principle for constructing dissipation-preserving finite difference schemes and apply it to the Swift--Hohenberg equation in three spatial dimensions. Our analysis relies on discrete inequalities for the underlying energy, assuming a Lipschitz continuous gradient and either convexity or $\mu$-strong convexity of the relevant terms. The resulting method is linearly implicit, yet it preserves the original energy-dissipation law, guarantees unique solvability, ensures boundedness of numerical solutions, and admits an a priori error estimate, provided that the time step is sufficiently small. To the best of our knowledge, this is the first linearly implicit finite difference scheme for the Swift--Hohenberg equation for which all of these properties are established.
}

\pacs[MSC Classification]{65M06, 65M12, 65M15, 35K35}

\maketitle

\section{Introduction}
\label{sec:Introduction}
The Swift--Hohenberg equation was introduced in~\cite{SH} as a model for hydrodynamic fluctuations. On a domain $\Omega\coloneqq[0,\Lx]\times[0,\Ly]\times[0,\Lz]$, the equation takes the form
\begin{equation}
    \label{equ:SH}
    \pdv{\uu}{t}=-\left(\uu^{3}+(1-\eta)\uu+\varepsilon\Delta^{2}\uu+2\Delta\uu\right),
\end{equation}
where $\uu:\Omega\times[0,\infty)\to\mathbb{R}$ denotes the dependent variable, $\varepsilon>0$ and $\eta\in\mathbb{R}$ are constants, and $ \Delta $ denotes the Laplacian operator. The boundary conditions are
\[
    \begin{dcases}
        \nabla\uu\cdot\bm{n}=0 & \text{on}\ \partial\Omega,\\
        \nabla(\Delta\uu)\cdot\bm{n}=0 & \text{on}\ \partial\Omega,
    \end{dcases}
\]
where $\bm{n}$ is the outward unit normal vector on $\partial\Omega$. It has been applied in various fields, such as fluid mechanics~\cite{SH}, optical physics~\cite{SH_appli_1}, and neuroscience~\cite{SH_appli_2}.
\par The Swift--Hohenberg equation admits the following structural properties. For the associated energy functional
\begin{equation}
    \label{equ:SH_energy}
    \Htruefunc{\uu}\coloneqq\int_{\Omega}\left(\frac{1}{4}\uu^{4}+\frac{1-\eta}{2}\uu^{2}+\frac{\varepsilon}{2}(\Delta \uu)^{2}-|\nabla\uu|^{2}\right)\dd{\bm{x}},
\end{equation}
the Swift--Hohenberg equation satisfies an energy-dissipation law 
\[
    \dv{\Htruefunc{\uu}}{t}\leq 0.
\]
Moreover, classical solutions remain bounded in time (see Section~\ref{subsec:Properties_of_the_Swift--Hohenberg_equation}). Hence, it is desirable to design a numerical scheme which inherits these properties at the discrete level. 
\par There have been many studies on numerical schemes for the Swift--Hohenberg equation. In~\cite{senko_1}, a numerical scheme for the one-dimensional case was proposed and validated by numerical experiments, but no theoretical guarantee of energy-dissipation or boundedness was provided. In~\cite{senko_2,senko_2_2,senko_2_3}, energy-stable schemes for two- and three-dimensional cases were proposed; however, these schemes require solving a nonlinear system at each time step, and a proof of boundedness of the numerical solutions is not addressed there. In~\cite{senko_3}, a space-time discretization that strictly respects a Lyapunov functional was introduced, emphasizing energy stability; however, boundedness or unique solvability are not addressed there.
\par Recently, several approaches have been developed to establish additional properties beyond energy stability. In~\cite{senko_4}, a fully implicit scheme for two- and three-dimensional cases was developed, and it was proved to satisfy the energy-dissipation law, boundedness, and an error estimate. In~\cite{senko_5}, a linearly implicit approach based on a stabilized Lagrange multiplier was introduced, together with error estimates; this scheme satisfies a modified energy-dissipation law and involves an auxiliary parameter. In~\cite{senko_FE}, a finite element scheme for the Swift--Hohenberg equation was proposed, establishing unique solvability, an energy-dissipation property, and boundedness; however, the error analysis is left open. In~\cite{Barbara}, a semi-discretized linearly implicit scheme for the Swift--Hohenberg equation was introduced, and it was shown that this scheme satisfies an energy-dissipation law; however, an a priori error estimate is not established there.
\par It is also possible to adapt the finite volume method to the Swift--Hohenberg equation. Indeed,~\cite{senko_notSH_FV_1} shows that for the biharmonic problem one can construct finite volume schemes with convergence and error analysis on general meshes, and~\cite{senko_notSH_FV_2} proposes bound-preserving and energy-dissipating finite volume schemes for the Cahn--Hilliard equation. However, implementing finite volume schemes for fourth-order models is generally involved, and in this sense, a finite difference scheme is often more practical. 
\par Therefore, it is important to develop a numerical scheme that reduces computational cost while still satisfying the energy-dissipation law, boundedness, unique solvability, and an error estimate. 
\par Our approach is based on a simple design principle, motivated by the splitting idea in~\cite{Iwade}. Specifically, we construct energy-dissipating schemes by appropriately splitting the right-hand side of the partial differential equation (PDE) into explicit and implicit terms. Building on that idea, we propose a new linearly implicit scheme for the Swift--Hohenberg equation.
\par The proposed scheme has the following properties. It is linearly implicit, preserves the original energy-dissipation law, is uniquely solvable at each time step, ensures boundedness of numerical solutions, and admits an a priori error estimate. These properties are proved under the assumption that the time step is smaller than or equal to a constant independent of the mesh size. To the best of our knowledge, this is the first linearly implicit finite difference scheme for the Swift--Hohenberg equation for which all of the above properties are established.
\par When applying the general principle to the Swift--Hohenberg equation (Section~\ref{sec:A_Dissipation-Preserving_Scheme_for_the_Swift--Hohenberg_Equation}), we work in three spatial dimensions with homogeneous Neumann boundary conditions. Analogous statements hold for periodic boundary conditions, and the one- or two-dimensional cases are similar.
\bmhead{Organization} Section~\ref{sec:Preliminaries} collects notation and standard preliminaries for the analysis. Section~\ref{sec:Dissipation-Preserving_Scheme_Framework} presents a design principle for dissipation-preserving schemes, establishes unique solvability and a discrete energy-dissipation law, and derives an error estimate, under a time-step restriction and a suitable regularity assumption on the exact solution. Section~\ref{sec:A_Dissipation-Preserving_Scheme_for_the_Swift--Hohenberg_Equation} applies the principle to the Swift--Hohenberg equation and establishes its properties, including boundedness of numerical solutions.
\section{Preliminaries}
\label{sec:Preliminaries}
\subsection{Definitions of the Computational Domain and Difference Operators}
In this paper, we apply the finite difference method. The computational domain is a rectangular box with lengths $\Lx$, $\Ly$, and $\Lz$ in the $x$-, $y$-, and $z$-directions. The mesh sizes are denoted by $\Dx$, $\Dy$, and $\Dz$, and the domain is discretized into $\Nx+1$, $\Ny+1$, and $\Nz+1$ grid points in the $x$-, $y$-, and $z$-directions, respectively. Namely, $\Lx=\Nx\Dx$, $\Ly=\Ny\Dy$, and $\Lz=\Nz\Dz$. 
\par We define $\delpi, \delmi, \deli$ as the forward difference operator, the backward difference operator, and the central difference operator for the $x$-direction, respectively, i.e.,
\[
    \begin{split}
        \delpi\fijk{i}{j}{k}&\coloneqq\frac{\fijk{i+1}{j}{k}-\fijk{i}{j}{k}}{\Dx},\\
        \delmi\fijk{i}{j}{k}&\coloneqq\frac{\fijk{i}{j}{k}-\fijk{i-1}{j}{k}}{\Dx},\\
        \deli\fijk{i}{j}{k}&\coloneqq\frac{\fijk{i+1}{j}{k}-\fijk{i-1}{j}{k}}{2\Dx}.\\
    \end{split}
\]
Similarly, $\delpj$, $\delmj$, and $\delj$ denote the corresponding operators for the $y$-direction, and $\delpk$, $\delmk$, and $\delk$ for the $z$-direction. We also define $\nabladp$ and $\nabladm$ as discrete approximations of the gradient, and $\deltwo$ as a discrete approximation of the Laplacian, i.e.,
\[
    \nabladp\coloneqq\left(\begin{array}{c}
        \delpi\\
        \delpj\\
        \delpk
    \end{array}\right),\quad\nabladm\coloneqq\left(\begin{array}{c}
        \delmi\\
        \delmj\\
        \delmk
    \end{array}\right),\quad\deltwo\coloneqq\nabladp\cdot\nabladm.
\]
\par The following lemma holds for these operators, which can be verified by a straightforward calculation.
\begin{lem}
    \label{lem:integration_by_parts}\cite{Furihata}
    For any three-dimensional grid functions $\fijk{i}{j}{k}$, and $\gijk{i}{j}{k}$, if
    \[
        \begin{dcases}
            \deli\fijk{\ax}{j}{k}=\deli\gijk{\ax}{j}{k}=\deli\fijk{\bx}{j}{k}=\deli\gijk{\bx}{j}{k}=0,\\
            \delj\fijk{i}{\ay}{k}=\delj\gijk{i}{\ay}{k}=\delj\fijk{i}{\by}{k}=\delj\gijk{i}{\by}{k}=0,\\
            \delk\fijk{i}{j}{\az}=\delk\gijk{i}{j}{\az}=\delk\fijk{i}{j}{\bz}=\delk\gijk{i}{j}{\bz}=0,
        \end{dcases}
    \]
    then
    \[
        \begin{split}
            &\quad\sumd{i=\ax}{\bx}\sumd{j=\ay}{\by}\sumd{k=\az}{\bz}\fijk{i}{j}{k}\deltwo\gijk{i}{j}{k}\Dx\Dy\Dz\\
            &=-\sumd{i=\ax}{\bx}\sumd{j=\ay}{\by}\sumd{k=\az}{\bz}\frac{\nabladp\fijk{i}{j}{k}\cdot\nabladp\gijk{i}{j}{k}+\nabladm\fijk{i}{j}{k}\cdot\nabladm\gijk{i}{j}{k}}{2}\Dx\Dy\Dz,\\
        \end{split}
    \]
    where
    \[
        \sumd{i=\aaa}{\bb}\fii{i}\coloneqq\frac{\fii{\aaa}}{2}+\fii{1}+\dots+\fii{\bb-1}+\frac{\fii{\bb}}{2}.
    \]
\end{lem}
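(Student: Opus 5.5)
Since $\deltwo=\delpi\delmi+\delpj\delmj+\delpk\delmk$ and the weighted triple sum factorizes direction by direction, the identity splits into three analogous one-dimensional statements. I will prove only the $x$-direction one, with $j,k$ frozen (the $y$- and $z$-sums then just ride along), and the others follow by symmetry:
\[
\sumd{i=\ax}{\bx} \fii{i}\,\delpi\delmi \gG_i\,\Dx=-\sumd{i=\ax}{\bx}\frac{\delpi\fii{i}\,\delpi\gG_i+\delmi\fii{i}\,\delmi\gG_i}{2}\Dx .
\]
The reason for the special weights is that the boundary condition $\deli\fii{0}=0$ means we use ghost values with $\fii{-1}=\fii{1}$, and similarly $\fii{\bx+1}=\fii{\bx-1}$; the same holds for $\gG$. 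The products $\delpi\fii{i}\delpi\gG_i$ and $\delmi\fii{i}\delmi\gG_i$ at $i=0,\bx$ are interpreted using these ghost values.

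First I will rewrite the right-hand side. Since $\delmi \fii{i}=\delpi\fii{i-1}$, the half-weighted averages of forward and backward products combine telescopically, and the reflection symmetry gives $\delmi\fii{0}\delmi\gG_0=\delpi\fii{0}\delpi\gG_0$ (and the analogue at $\bx$). Putting these together, the right-hand side equals the ordinary sum $-\sum_{i=0}^{\bx-1}\delpi\fii{i}\,\delpi\gG_i\,\Dx$.

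Second, I will apply summation by parts to the left-hand side, $\delpi\delmi \gG_i=(\delpi\gG_i-\delpi\gG_{i-1})/\Dx$. Abel summation gives
\[
\sum_{i=0}^{\bx-1}\fii{i}(\delpi\gG_i-\delpi\gG_{i-1})=-\sum_{i=0}^{\bx-1}(\fii{i+1}-\fii{i})\delpi\gG_i +\fii{\bx}\delpi\gG_{\bx-1}-\fii{0}\delpi\gG_{-1}.
\]
The endpoint contributions from the trapezoidal halves at $i=0,\bx$ are
\[
\tfrac12\fii{0}(\delpi\gG_0-\delpi\gG_{-1})\quad\text{and}\quad \tfrac12\fii{\bx}(\delpi\gG_{\bx}-\delpi\gG_{\bx-1}).
\]
Using $\gG_{-1}=\gG_1$ we get $\delpi\gG_{-1}=-\delpi\gG_0$, and likewise $\delpi\gG_{\bx}=-\delpi\gG_{\bx-1}$. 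These relations make all boundary terms cancel, leaving exactly $-\sum_{i=0}^{\bx-1}\delpi\fii{i}\delpi\gG_i\,\Dx$. This matches the rewritten right-hand side.

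The main obstacle is purely bookkeeping: keeping track of the half weights at the endpoints together with the ghost-point reflection, so that the boundary terms cancel rather than double. I would verify this by checking a tiny case such as $\bx=1$ explicitly.
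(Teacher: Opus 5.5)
Your argument is correct and is exactly the ``straightforward calculation'' the paper has in mind; the paper gives no details beyond citing Furihata. After splitting by direction, the reflections $f_{-1}=f_{1}$, $f_{N_x+1}=f_{N_x-1}$ (and likewise for $g$) reduce both sides to $-\sum_{i=0}^{N_x-1}\delta_i^{+}f_i\,\delta_i^{+}g_i\,\Delta x$. One thing to make explicit in the second step is the signs: the trapezoidal sum equals $\sum_{i=0}^{N_x-1}$ minus half the $i=0$ term plus half the $i=N_x$ term, and your boundary cancellation holds only with these signs.
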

\subsection{Discrete Function Space and Convex Analysis}
\par We first define the discrete counterpart of the function space of functions $\Omega\to\mathbb{R}$ and discuss Lipschitz continuous gradients, convexity, and $\mu$-strong convexity in the discrete setting.
\par We define the vector space
\[
    \Ldtwo\coloneqq\set{\fF=\left(\fijk{i}{j}{k}\right)}{\fijk{i}{j}{k}\in\mathbb{R},i\in\left\{0,\dots,\Nx\right\},j\in\left\{0,\dots,\Ny\right\},k\in\left\{0,\dots,\Nz\right\}}
\]
and the inner product on it as follows: for $\fF,\gG\in\Ldtwo$, the inner product $\productLdtwo{\fF}{\gG}$ is given by
\[
    \productLdtwo{\fF}{\gG}\coloneqq\sumd{i=0}{\Nx}\sumd{j=0}{\Ny}\sumd{k=0}{\Nz}\fijk{i}{j}{k}\gijk{i}{j}{k}\Dx\Dy\Dz.
\]
In addition, we define the norms and the seminorm on it as follows: for $\fF\in\Ldtwo$, the norms $\normLdtwo{\fF}$ and $\normLd{\infty}{\fF}$, and the seminorm $\normLdtwo{\DD\fF}$ are given by
\[
    \begin{split}
        \normLdtwo{\fF}&\coloneqq\sqrt{\productLdtwo{\fF}{\fF}},\\
        \normLd{\infty}{\fF}&\coloneqq\max_{(i,j,k)\in\{0,\dots,N_{x}\}\times\{0,\dots,N_{y}\}\times\{0,\dots,N_{z}\}}\left|\fijk{i}{j}{k}\right|,\\
        \normLdtwo{\DD\fF}&\coloneqq\left(\sumd{i=0}{\Nx}\sumd{j=0}{\Ny}\sumd{k=0}{\Nz}\frac{\left|\nabladp\fijk{i}{j}{k}\right|^{2}+\left|\nabladm\fijk{i}{j}{k}\right|^{2}}{2}\Dx\Dy\Dz\right)^{\frac{1}{2}},
    \end{split}
\]
where, for any $c=\left(\begin{array}{c}
    \cx\\
    \cy\\
    \cz
\end{array}\right)\in\mathbb{R}^{3}$, we set
\[
    \abs{c}\coloneqq\sqrt{\cx^{2}+\cy^{2}+\cz^{2}}.
\]
Moreover, we introduce the mapping $\iotaa:\left\{\Omega\to\mathbb{R}\right\}\to\Ldtwo$ defined by
\[
    \iotaafuncijk{\vv}{i}{j}{k}\coloneqq\vfunc{i\Dx}{j\Dy}{k\Dz}
\]
for a function $\vv:\Omega\to\mathbb{R}$.
\par The following discussion in this subsection is based on~\cite{iwade_convex}.
\par For a function $\nuu:\Ldtwo\to\mathbb{R}$, we define the gradient of $\nuu$. Let $\nuu:\Ldtwo\to\mathbb{R}$ be a function and suppose that, for any $\fF,\hhhh\in\Ldtwo$,
\[
    \lim_{\Lambdaa\to +0}\frac{\nufunc{\fF+\Lambdaa\hhhh}-\nufunc{\fF}}{\Lambdaa}\quad(\Lambdaa\in\mathbb{R})
\]
is well-defined. Moreover, assume that the map
\[
    \Ldtwo\ni\hhhh\mapsto\lim_{\Lambdaa\to +0}\frac{\nufunc{\fF+\Lambdaa\hhhh}-\nufunc{\fF}}{\Lambdaa}\in\mathbb{R}
\]
is linear and continuous. Then, since $\Ldtwo$ is finite-dimensional and hence a Hilbert space, the Riesz representation theorem implies that there exists a unique $\nablaLdtwo\nufunc{\fF}\in\Ldtwo$ such that
\[
    \lim_{\Lambdaa\to +0}\frac{\nufunc{\fF+\Lambdaa\hhhh}-\nufunc{\fF}}{\Lambdaa}=\productLdtwo{\nablaLdtwo\nufunc{\fF}}{\hhhh}.
\]
Therefore, we define this $\nablaLdtwo\nufunc{\fF}\in\Ldtwo$ as the gradient of $\nuu$ at $\fF$.
\par A function $\nuu:\Ldtwo\to\mathbb{R}$ is called $\LL$-smooth if $\nablaLdtwo\nufunc{\fF}$ exists for all $\fF\in\Ldtwo$, and if $\nablaLdtwo\nufunc{\fF}$ is Lipschitz continuous; in other words, there exists a positive constant $\LL$ such that for all $\fF,\gG\in\Ldtwo$,
\[
    \normLdtwo{\nablaLdtwo\nufunc{\fF}-\nablaLdtwo\nufunc{\gG}}\leq\LL\normLdtwo{\fF-\gG}.
\]
In addition, a function $\nuu:\Ldtwo\to\mathbb{R}$ is called convex if, for all $\fF,\gG\in\Ldtwo$ and $\lambda\in[0,1]$, one has
\[
    \nufunc{(1-\lambda)\fF+\lambda\gG}\leq(1-\lambda)\nufunc{\fF}+\lambda\nufunc{\gG}.
\]
Moreover, a function $\nuu:\Ldtwo\to\mathbb{R}$ is called $\mu$-strongly convex if $\nablaLdtwo\nufunc{\fF}$ exists for all $\fF\in\Ldtwo$ and there exists $\mu$ such that, for all $\fF,\gG\in\Ldtwo$,
\[
    \nufunc{\gG}\geq\nufunc{\fF}+\productLdtwo{\nablaLdtwo\nufunc{\fF}}{\gG-\fF}+\frac{\mu}{2}\normLdtwo{\gG-\fF}^{2}
\]
holds. In particular, the case $\mu=0$ reduces to convexity; see Proposition~\ref{prop:convex_equivalent}.
\par The following are standard facts in convex analysis. We present them here for completeness.
\begin{prop}
    Let $\nuu:\Ldtwo\to\mathbb{R}$ be $\LL$-smooth. Then, for all $\fF,\gG$, the following inequality holds:
    \[
        \nufunc{\gG}\leq\nufunc{\fF}+\productLdtwo{\nablaLdtwo\nufunc{\fF}}{\gG-\fF}+\frac{\LL}{2}\normLdtwo{\gG-\fF}^{2}.
    \]
\end{prop}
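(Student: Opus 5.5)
The plan is to reduce the inequality to a one-dimensional integral along the segment from $\fF$ to $\gG$ and then apply the Lipschitz bound on the gradient. Fix $\fF,\gG\in\Ldtwo$, set $\hhhh\coloneqq\gG-\fF$, and define $\phi(s)\coloneqq\nufunc{\fF+s\hhhh}$ for $s\in[0,1]$.

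The first step is to show that $\phi$ is differentiable with
\[
    \phi'(s)=\productLdtwo{\nablaLdtwo\nufunc{\fF+s\hhhh}}{\hhhh}.
\]
The definition of the gradient only provides the right derivative, namely the limit $\Lambdaa\to+0$. By the assumed linearity of the directional derivative in $\hhhh$, the right derivative in direction $-\hhhh$ equals minus that in direction $\hhhh$, so the left derivative coincides with the right derivative and $\phi$ is differentiable. The expression above shows $\phi'$ is continuous, since $\nablaLdtwo\nuu$ is Lipschitz and hence continuous. Therefore the fundamental theorem of calculus applies:
\[
    \nufunc{\gG}-\nufunc{\fF}=\int_{0}^{1}\productLdtwo{\nablaLdtwo\nufunc{\fF+s\hhhh}}{\hhhh}\dd{s}.
\]

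The second step is to subtract $\productLdtwo{\nablaLdtwo\nufunc{\fF}}{\hhhh}$ from both sides. This gives
\[
    \nufunc{\gG}-\nufunc{\fF}-\productLdtwo{\nablaLdtwo\nufunc{\fF}}{\hhhh}=\int_{0}^{1}\productLdtwo{\nablaLdtwo\nufunc{\fF+s\hhhh}-\nablaLdtwo\nufunc{\fF}}{\hhhh}\dd{s}.
\]
We bound the integrand using the Cauchy--Schwarz inequality in $\Ldtwo$ together with $\LL$-smoothness. This yields the bound $\LL s\normLdtwo{\hhhh}^{2}$, and integrating gives $\frac{\LL}{2}\normLdtwo{\hhhh}^{2}$, which is the claim.

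The only delicate point is the first step, because the paper defines the gradient through a one-sided limit. It must be checked carefully that the left derivative of $\phi$ exists and equals the right derivative. I would handle this via the linearity of $\hhhh\mapsto\lim_{\Lambdaa\to+0}(\cdots)$, applied at the point $\fF+s\hhhh$ in directions $\pm\hhhh$. Alternatively, one can avoid this issue by noting that a continuous function on an interval with a continuous right derivative is $C^{1}$. Everything else is routine.
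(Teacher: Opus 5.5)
Your proof is correct and complete. The paper gives no proof of this proposition: it states it as a standard fact from convex analysis, following the cited book of Peypouquet, and your argument is the standard proof of the descent lemma via the fundamental theorem of calculus along the segment from $f$ to $g$.

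You handle the one-sided limit in the paper's definition of the gradient correctly. Applying linearity of the directional derivative at $f+sh$ to the directions $h$ and $-h$ shows that $s\mapsto\nu(f+sh)$ is two-sided differentiable. Its derivative $\langle\nabla_{L^{2}_{\mathrm{d}}}\nu(f+sh),h\rangle_{L^{2}_{\mathrm{d}}}$ is continuous, so the fundamental theorem of calculus applies.

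One caveat on your alternative remark: it presupposes that $\phi$ is continuous. A right derivative alone gives only right-continuity; a step function has a continuous right derivative but is not continuous. In this setting, continuity of $\phi$ again comes from the same linearity argument, so your main route is the one to keep.
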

\begin{prop}
    \label{prop:convex_equivalent}
    Let $\nuu:\Ldtwo\to\mathbb{R}$ be a function such that the gradient $\nablaLdtwo\nufunc{\fF}$ exists for all $\fF\in\Ldtwo$. Then the following statements are equivalent.
    \begin{enumerate}
        \item The function $\nuu$ is convex.
        \item $\forall\fF,\gG\in\Ldtwo,\nufunc{\gG}\geq\nufunc{\fF}+\productLdtwo{\nablaLdtwo\nufunc{\fF}}{\gG-\fF}.$
        \item $\forall\fF,\gG\in\Ldtwo,\productLdtwo{\nablaLdtwo\nufunc{\fF}-\nablaLdtwo\nufunc{\gG}}{\fF-\gG}\geq 0.$
    \end{enumerate}
\end{prop}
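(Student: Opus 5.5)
We prove the cycle (1)$\Rightarrow$(2)$\Rightarrow$(3)$\Rightarrow$(1). Throughout, the only tool is the definition of the gradient as a one-sided directional derivative, $\lim_{\Lambdaa\to+0}(\nufunc{\fF+\Lambdaa\hhhh}-\nufunc{\fF})/\Lambdaa=\productLdtwo{\nablaLdtwo\nufunc{\fF}}{\hhhh}$, applied along the segment joining two grid functions.

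\emph{(1)$\Rightarrow$(2).} Fix $\fF,\gG$ and take $\lambda\in(0,1]$. Convexity gives $\nufunc{\fF+\lambda(\gG-\fF)}\leq(1-\lambda)\nufunc{\fF}+\lambda\nufunc{\gG}$. Rearranging,
\[
\frac{\nufunc{\fF+\lambda(\gG-\fF)}-\nufunc{\fF}}{\lambda}\leq\nufunc{\gG}-\nufunc{\fF}.
\]
Letting $\lambda\to+0$ and using the definition of $\nablaLdtwo\nufunc{\fF}$ with $\hhhh=\gG-\fF$ yields (2).

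\emph{(2)$\Rightarrow$(3).} Write (2) once as stated and once with $\fF$ and $\gG$ interchanged, and add the two inequalities. The function values cancel, leaving
\[
0\geq\productLdtwo{\nablaLdtwo\nufunc{\fF}}{\gG-\fF}+\productLdtwo{\nablaLdtwo\nufunc{\gG}}{\fF-\gG},
\]
which is exactly (3).

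\emph{(3)$\Rightarrow$(1).} This is the main step. Fix $\fF,\gG$ and set $\varphi(s)\coloneqq\nufunc{\fF+s(\gG-\fF)}$ for $s\in[0,1]$. By the definition of the gradient, $\varphi$ has right derivative $\varphi'_+(s)=\productLdtwo{\nablaLdtwo\nufunc{\fF+s(\gG-\fF)}}{\gG-\fF}$. Linearity of the directional derivative in $\hhhh$, applied with $\hhhh=-(\gG-\fF)$, shows that the left derivative exists and coincides with it. Hence $\varphi$ is differentiable on $[0,1]$ (one-sided at the endpoints).

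Now take $s<t$ and apply (3) to the points $\fF+t(\gG-\fF)$ and $\fF+s(\gG-\fF)$. Their difference is $(t-s)(\gG-\fF)$, so (3) gives $(t-s)(\varphi'(t)-\varphi'(s))\geq0$. Thus $\varphi'$ is nondecreasing.

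A differentiable function on an interval with nondecreasing derivative is convex. To see this, let $x_\lambda\coloneqq(1-\lambda)\cdot0+\lambda\cdot1=\lambda$. Apply the mean value theorem to $\varphi$ on $[0,\lambda]$ and on $[\lambda,1]$; this produces points $\xi_1\leq\lambda\leq\xi_2$ with
\[
\frac{\varphi(\lambda)-\varphi(0)}{\lambda}=\varphi'(\xi_1)\leq\varphi'(\xi_2)=\frac{\varphi(1)-\varphi(\lambda)}{1-\lambda}.
\]
Rearranging gives $\varphi(\lambda)\leq(1-\lambda)\varphi(0)+\lambda\varphi(1)$, which is precisely the convexity inequality for $\fF,\gG$.

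\emph{Main obstacle.} The delicate point is justifying differentiability of $\varphi$, since the gradient is only assumed to exist and no continuity of $\nablaLdtwo\nuu$ is assumed. The mean value theorem, however, requires only differentiability, not $C^1$, so this suffices. The two-sided derivative follows from the assumed linearity of the one-sided directional derivative, because the left difference quotient at $s$ is minus the right quotient in direction $-(\gG-\fF)$. Differentiability of $\varphi$ also gives its continuity, so no separate argument is needed.
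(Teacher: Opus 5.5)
Your proof is correct and complete. The paper does not prove this proposition; it lists it among ``standard facts in convex analysis'' taken from Peypouquet's book, so there is no in-paper argument to match.

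The usual textbook route closes the cycle as (3)$\Rightarrow$(2)$\Rightarrow$(1). First one shows that $t\mapsto \nu(f+t(g-f))-t\,\langle\nabla\nu(f),g-f\rangle$ has nonnegative derivative, hence is nondecreasing, which gives (2). Then one gets convexity by applying (2) at $z=(1-\lambda)f+\lambda g$ toward $f$ and toward $g$ and taking the $(1-\lambda),\lambda$ combination, so that the gradient terms cancel. You instead go from (3) straight to (1) by applying the mean value theorem to the one-dimensional restriction $\varphi$ on $[0,\lambda]$ and $[\lambda,1]$.

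Both routes rest on the same two facts, and you justify both properly:
\begin{itemize}
\item $\varphi$ is genuinely two-sided differentiable. This holds because the paper defines the gradient through a one-sided directional derivative that is assumed linear in the direction.
\item Condition (3) makes $\varphi'$ nondecreasing.
\end{itemize}
Neither route needs continuity of the gradient, since the mean value theorem only uses differentiability. You correctly flagged this as the delicate point.

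The textbook route has the side benefit of also giving (2)$\Rightarrow$(1) directly. Yours is slightly shorter for the single implication (3)$\Rightarrow$(1).

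The only cosmetic omission is that your mean value step needs $\lambda\in(0,1)$; the cases $\lambda\in\{0,1\}$ are trivial.
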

\begin{prop}
    \label{prop:mu_min}
    Let $\mu$ be a positive constant. Then, a $\mu$-strongly convex function $\nuu:\Ldtwo\to\mathbb{R}$ attains its minimum at a unique point $\fstar\in\Ldtwo$, and
    \[
        \nablaLdtwo\nufunc{\fstar}=0.
    \]
    Moreover, the equation $\nablaLdtwo\nufunc{\fF}=0$ holds only when $\fF=\fstar$.
\end{prop}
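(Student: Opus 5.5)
We prove Proposition~\ref{prop:mu_min} in three steps: uniqueness of critical points, existence of a minimizer, and the identification of the two.

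\emph{Uniqueness of critical points.} Suppose $\nablaLdtwo\nufunc{\fF}=\nablaLdtwo\nufunc{\gG}=0$. Applying the $\mu$-strong convexity inequality with base point $\fF$ and then with base point $\gG$ gives
\[
    \nufunc{\gG}\geq\nufunc{\fF}+\frac{\mu}{2}\normLdtwo{\gG-\fF}^{2},\qquad \nufunc{\fF}\geq\nufunc{\gG}+\frac{\mu}{2}\normLdtwo{\gG-\fF}^{2}.
\]
Adding the two yields $\mu\normLdtwo{\gG-\fF}^{2}\leq 0$, so $\fF=\gG$ because $\mu>0$. This proves the last assertion once a minimizer with vanishing gradient is known to exist.

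\emph{Existence of a minimizer.} This is the main point. Fix any $\fF_{0}\in\Ldtwo$. By the strong convexity inequality and the Cauchy--Schwarz inequality,
\[
    \nufunc{\gG}\geq\nufunc{\fF_{0}}-\normLdtwo{\nablaLdtwo\nufunc{\fF_{0}}}\normLdtwo{\gG-\fF_{0}}+\frac{\mu}{2}\normLdtwo{\gG-\fF_{0}}^{2},
\]
so $\nuu$ is coercive. Hence the sublevel set $S\coloneqq\set{\gG}{\nufunc{\gG}\leq\nufunc{\fF_{0}}}$ is contained in a closed ball, which is compact since $\Ldtwo$ is finite-dimensional. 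Next, $\nuu$ is continuous. By Proposition~\ref{prop:convex_equivalent}, a $\mu$-strongly convex function is convex, because the strong convexity inequality implies statement~2 there. A convex real-valued function on a finite-dimensional space is continuous. Alternatively, continuity can be obtained directly from the G\^ateaux gradient and convexity via the standard argument. The sublevel set $S$ is nonempty, since it contains $\fF_{0}$, and it is closed by continuity. By the Weierstrass theorem, $\nuu$ attains its minimum on $S$ at some point $\fstar$, and this is a global minimizer because every point outside $S$ has a larger value.

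\emph{Vanishing gradient.} For any $\hhhh\in\Ldtwo$ and $\Lambdaa>0$, minimality gives $\nufunc{\fstar+\Lambdaa\hhhh}-\nufunc{\fstar}\geq 0$. Dividing by $\Lambdaa$ and letting $\Lambdaa\to+0$ yields $\productLdtwo{\nablaLdtwo\nufunc{\fstar}}{\hhhh}\geq 0$. Taking $\hhhh=-\nablaLdtwo\nufunc{\fstar}$ then gives $\nablaLdtwo\nufunc{\fstar}=0$.

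\emph{Uniqueness of the minimizer.} Any minimizer is a critical point by the previous step, so it coincides with $\fstar$ by the first step. The only delicate point in the whole argument is the continuity of $\nuu$ needed for the Weierstrass theorem, which is supplied by convexity in finite dimensions.
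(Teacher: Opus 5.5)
Your proof is correct and complete. The paper does not actually prove this proposition. It lists it among ``standard facts in convex analysis'' and defers to \cite{iwade_convex}, where such results are treated in general normed (reflexive) spaces; the infinite-dimensional version needs weak compactness of bounded sets and weak lower semicontinuity. Your argument is a self-contained version of the same standard route, and it avoids the weak topology by exploiting that $\Ldtwo$ is finite-dimensional. Its steps are:
\begin{enumerate}
    \item coercivity from the strong convexity inequality at a fixed base point;
    \item norm compactness of the bounded, closed sublevel set;
    \item the Weierstrass theorem;
    \item the one-sided directional derivative at the minimizer, giving $\nablaLdtwo\nufunc{\fstar}=0$;
    \item the symmetrized strong convexity estimate, showing that critical points, and hence minimizers, are unique.
\end{enumerate}
Each step is valid for the paper's G\^ateaux-type definition of the gradient. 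Your appeal to Proposition~\ref{prop:convex_equivalent} uses only the elementary implication from statement~2 to statement~1, so nothing is circular.

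You can shorten the argument slightly by dropping the theorem that finite convex functions are continuous. The strong convexity inequality itself shows
\[
    \nufunc{\gG}=\sup_{\fF\in\Ldtwo}\Bigl\{\nufunc{\fF}+\productLdtwo{\nablaLdtwo\nufunc{\fF}}{\gG-\fF}+\frac{\mu}{2}\normLdtwo{\gG-\fF}^{2}\Bigr\},
\]
with the supremum attained at $\fF=\gG$. Thus $\nuu$ is a supremum of continuous functions of $\gG$, hence lower semicontinuous, which is all the Weierstrass step needs. Similarly, if $\nablaLdtwo\nufunc{\fF}=0$ at any point $\fF$, strong convexity at that base point gives $\nufunc{\gG}\geq\nufunc{\fF}+\frac{\mu}{2}\normLdtwo{\gG-\fF}^{2}$. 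So every critical point is a strict global minimizer, and uniqueness of both the minimizer and the critical point follows directly.
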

\subsection{Notation for Solutions}
In this paper, we denote the exact solution of an equation by $\ufunc{x}{y}{z}{t}$, and we take $\Un{n}\in\Ldtwo$ so that $\Uijkn{i}{j}{k}{n}$ approximates $\ufunc{i\Dx}{j\Dy}{k\Dz}{n\Dt}$, where $\Dt$ is the time step. We assume that the initial values coincide, i.e.,
\[
    \Uijkn{i}{j}{k}{0}=\ufunc{i\Dx}{j\Dy}{k\Dz}{0}.
\]
We denote the error $\en{n}\in\Ldtwo$ by
\[
    \eijkn{i}{j}{k}{n}\coloneqq\Uijkn{i}{j}{k}{n}-\ufunc{i\Dx}{j\Dy}{k\Dz}{n\Dt}.
\]
\par For simplicity, we denote the function $\unn{n}:\Omega\to\mathbb{R}$ as
\[
    \unnfunc{x}{y}{z}{n}\coloneqq\ufunc{x}{y}{z}{n\Dt}.
\]
In addition, let $\displaystyle\un{n},\dudtn{n},\Deltaun{n},\Deltatwoun{n}\in\Ldtwo$ be defined by
\[
    \begin{split}
        \uijkn{i}{j}{k}{n}&\coloneqq\ufunc{i\Dx}{j\Dy}{k\Dz}{n\Dt},\\
        \dudtijkn{i}{j}{k}{n}&\coloneqq\pdv{\uu}{t}\mleft(i\Dx,j\Dy,k\Dz,n\Dt\mright),\\
        \Deltauijkn{i}{j}{k}{n}&\coloneqq\Delta\uu\mleft(i\Dx,j\Dy,k\Dz,n\Dt\mright),\\
        \Deltatwouijkn{i}{j}{k}{n}&\coloneqq\Delta^{2}\uu\mleft(i\Dx,j\Dy,k\Dz,n\Dt\mright).
    \end{split}
\]
Also, for $\UU\in\Ldtwo$, we define $\UU^{3},\deltwo\UU,\delfour\UU\in\Ldtwo$ as
\[
    \begin{split}
        \left(\UU^{3}\right)_{i,j,k}&\coloneqq\Uijk{i}{j}{k}^{3},\\
        \left(\deltwo\UU\right)_{i,j,k}&\coloneqq\deltwo\Uijk{i}{j}{k},\\
        \left(\delfour\UU\right)_{i,j,k}&\coloneqq\delfour\Uijk{i}{j}{k}.
    \end{split}
\] 
    
\section{Dissipation-Preserving Scheme Framework}
\label{sec:Dissipation-Preserving_Scheme_Framework}
In this paper, we propose a linearly implicit finite difference scheme for the Swift--Hohenberg equation. The key idea is to split the right-hand side into explicit and implicit parts so as to obtain a dissipation-preserving scheme, motivated by~\cite{Iwade}. In this section, we formulate the general design principle and establish unique solvability and a discrete energy-dissipation law, together with an error estimate, under a time-step restriction and a suitable regularity assumption on the exact solution. The concrete scheme for the Swift--Hohenberg equation is presented in the next section.
\subsection{Definition of a \Iwade}
Consider the PDE
\begin{equation}
    \label{equ:exact_general}
    \pdv{\uu}{t}=-\left\{\nutilonefunc{\uu}+\nutiltwofunc{\uu}-\nutilthreefunc{\uu}\right\}
\end{equation}
on $\Omega$, together with a scheme
\[
    \frac{\Un{n+1}-\Un{n}}{\Dt}=-\left\{\nablaLdtwo\nuonefunc{\Un{n}}+\nablaLdtwo\nutwofunc{\Un{n+1}}-\nablaLdtwo\nuthreefunc{\Un{n}}\right\},
\]
where $\nuone,\nutwo,\nuthree:\Ldtwo\to\mathbb{R}$, $\tilde{\nuu}_{1},\tilde{\nuu}_{2},\tilde{\nuu}_{3}$ denote the continuous counterparts of $\nuone,\nutwo,\nuthree$, and $\nabla_{L^{2}}$ denotes the continuous counterpart of $\nablaLdtwo$ (exact definitions are omitted for brevity). We assume boundary conditions (e.g., periodic or Neumann) under which $\nabla_{L^{2}}$ and $\nablaLdtwo$ are well-defined.
\par It is not obvious how to choose a splitting of the right-hand side into $\tilde{\nuu}_{1},\tilde{\nuu}_{2},\tilde{\nuu}_{3}$. However, we show below that a splitting based on the decomposition ($\LL$-smooth) $+$ (strongly convex) $-$ (strongly convex) leads to a scheme which admits a unique solution at each time step and satisfies a dissipation law under a time-step restriction.
\begin{dfn}
    \label{dfn:iwade}
    Let $\nuone:\Ldtwo\to\mathbb{R}$ be $\LL$-smooth, and let $\nutwo,\nuthree:\Ldtwo\to\mathbb{R}$ be $\mu_{2}$- and $\mu_{3}$-strongly convex, respectively, with $\mu_{2}\geq 0$. For $\Un{n}\in\Ldtwo$, the following scheme is called a ``\Iwade'':
    \[
        \frac{\Un{n+1}-\Un{n}}{\Dt}=-\left\{\nablaLdtwo\nuonefunc{\Un{n}}+\nablaLdtwo\nutwofunc{\Un{n+1}}-\nablaLdtwo\nuthreefunc{\Un{n}}\right\}.
    \]
\end{dfn}
\begin{thm}
    \label{thm:iwade_unique}
    The scheme in Definition~\ref{dfn:iwade} admits a unique solution at each time step.
\end{thm}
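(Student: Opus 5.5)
Given $\Un{n}\in\Ldtwo$, we recast the scheme as a minimization problem. Define the auxiliary functional $\Phi:\Ldtwo\to\mathbb{R}$ by
\[
    \Phi(\UU)\coloneqq\frac{1}{2\Dt}\normLdtwo{\UU-\Un{n}}^{2}+\nutwofunc{\UU}+\productLdtwo{\nablaLdtwo\nuonefunc{\Un{n}}-\nablaLdtwo\nuthreefunc{\Un{n}}}{\UU}.
\]
The last term is linear in $\UU$ because $\Un{n}$ is fixed, so its gradient is the constant vector $\nablaLdtwo\nuonefunc{\Un{n}}-\nablaLdtwo\nuthreefunc{\Un{n}}$. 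The gradient of the quadratic term is $(\UU-\Un{n})/\Dt$. Hence
\[
    \nablaLdtwo\Phi(\UU)=\frac{\UU-\Un{n}}{\Dt}+\nablaLdtwo\nuonefunc{\Un{n}}+\nablaLdtwo\nutwofunc{\UU}-\nablaLdtwo\nuthreefunc{\Un{n}},
\]
and $\Un{n+1}$ solves the scheme in Definition~\ref{dfn:iwade} if and only if $\nablaLdtwo\Phi(\Un{n+1})=0$.

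Next we check that $\Phi$ is $(1/\Dt+\mu_{2})$-strongly convex.
\begin{itemize}
    \item The quadratic term satisfies the defining inequality of strong convexity with constant $1/\Dt$, and in fact with equality, by expanding the square.
    \item The functional $\nutwo$ satisfies it with constant $\mu_{2}\geq 0$ by hypothesis.
    \item The linear term satisfies it with constant $0$, again with equality.
\end{itemize}
Adding the three inequalities gives the strong-convexity inequality for $\Phi$ with constant $\mu\coloneqq 1/\Dt+\mu_{2}>0$. The gradient of $\Phi$ exists everywhere, since each summand's does.

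We can now apply Proposition~\ref{prop:mu_min}. It gives a unique minimizer $\fstar$ of $\Phi$ with $\nablaLdtwo\Phi(\fstar)=0$, so $\Un{n+1}=\fstar$ solves the scheme. The proposition also states that $\nablaLdtwo\Phi(\fF)=0$ only when $\fF=\fstar$, so any solution of the scheme equals $\fstar$, which gives uniqueness. Induction on $n$ then yields a unique solution at every time step.

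No step is a real obstacle. The one point to handle carefully is that the condition $\mu_{2}\geq 0$ is exactly what keeps $\mu$ strictly positive for every $\Dt>0$. The $L$-smoothness of $\nuone$ and the convexity of $\nuthree$ play no role here, because those terms are evaluated only at the known $\Un{n}$; they will be needed for the dissipation law rather than for solvability.
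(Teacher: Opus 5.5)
Your proposal is correct and is essentially the paper's proof: your $\Phi$ is exactly the paper's auxiliary functional $\rho^{(n)}$, which is shown to be $(\mu_{2}+1/\Dt)$-strongly convex, and Proposition~\ref{prop:mu_min} then gives a unique zero of its gradient, which is $\Un{n+1}$. Your closing observation that the smoothness of $\nuone$ plays no role also matches the paper's Remark~\ref{rem:iwade_unique}.
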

\begin{proof}
    Let
    \[
        \nufournfunc{n}{\UU}\coloneqq\nutwofunc{\UU}-\productLdtwo{\UU}{-\nablaLdtwo\nuonefunc{\Un{n}}+\nablaLdtwo\nuthreefunc{\Un{n}}}+\frac{1}{2\Dt}\normLdtwo{\UU-\Un{n}}^{2}.
    \]
    It follows that $\nufourn{n}$ is $\left(\mu_{2}+\frac{1}{\Dt}\right)$-strongly convex. Therefore, as $\mu_{2}\geq 0$ in Definition~\ref{dfn:iwade}, Proposition~\ref{prop:mu_min} yields the existence of a unique $\UU$ with $\nablaLdtwo\nufournfunc{n}{\UU}=0$. Here,
    \[
        \begin{split}
            &\nablaLdtwo\nufournfunc{n}{\UU}=0\\
            \iff&\nablaLdtwo\nutwofunc{\UU}+\nablaLdtwo\nuonefunc{\Un{n}}-\nablaLdtwo\nuthreefunc{\Un{n}}+\frac{\UU-\Un{n}}{\Dt}=0,\\
        \end{split}
    \]
    and this condition is satisfied when $\UU=\Un{n+1}$. Hence, there exists a unique $\Un{n+1}$, i.e., the scheme in Definition~\ref{dfn:iwade} admits a unique solution.
\end{proof}
\begin{rem}
    \label{rem:iwade_unique}
    Theorem~\ref{thm:iwade_unique} holds without any smoothness assumption on $\nuone$. The smoothness of $\nuone$ will be used in subsequent results.
\end{rem}
\begin{thm}
    \label{thm:iwade_general_mu}
    In Definition~\ref{dfn:iwade}, let $\nusum\coloneqq\nuone+\nutwo-\nuthree$. If $\LL\leq\mu_{2}+\mu_{3}$ or $\begin{cases}
        \LL>\mu_{2}+\mu_{3}\\
        \Dt\leq\frac{2}{\LL-\mu_{2}-\mu_{3}}
    \end{cases}$ holds, $\nusumfunc{\Un{n}}$ is non-increasing in $n$.
\end{thm}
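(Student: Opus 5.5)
Write $\delta\coloneqq\Un{n+1}-\Un{n}$. The plan is to bound each of $\nuone$, $\nutwo$, $-\nuthree$ at $\Un{n+1}$ by a first-order expansion plus a quadratic term. Then use the scheme to eliminate the gradient terms, which leaves a single multiple of $\normLdtwo{\delta}^{2}$ whose sign is controlled by the time-step condition.

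First, the $\LL$-smoothness inequality from the first proposition of the preliminaries, applied with base point $\Un{n}$, gives
\[
\nuonefunc{\Un{n+1}}\leq\nuonefunc{\Un{n}}+\productLdtwo{\nablaLdtwo\nuonefunc{\Un{n}}}{\delta}+\frac{\LL}{2}\normLdtwo{\delta}^{2}.
\]
Next, the $\mu_{2}$-strong convexity of $\nutwo$ is used with base point $\Un{n+1}$ and evaluation point $\Un{n}$, because the scheme evaluates $\nablaLdtwo\nutwo$ implicitly. This gives
\[
\nutwofunc{\Un{n}}\geq\nutwofunc{\Un{n+1}}-\productLdtwo{\nablaLdtwo\nutwofunc{\Un{n+1}}}{\delta}+\frac{\mu_{2}}{2}\normLdtwo{\delta}^{2},
\]
which rearranges to
\[
\nutwofunc{\Un{n+1}}\leq\nutwofunc{\Un{n}}+\productLdtwo{\nablaLdtwo\nutwofunc{\Un{n+1}}}{\delta}-\frac{\mu_{2}}{2}\normLdtwo{\delta}^{2}.
\]
Finally, the $\mu_{3}$-strong convexity of $\nuthree$ is used with base point $\Un{n}$, since $\nablaLdtwo\nuthree$ is evaluated explicitly:
\[
-\nuthreefunc{\Un{n+1}}\leq-\nuthreefunc{\Un{n}}-\productLdtwo{\nablaLdtwo\nuthreefunc{\Un{n}}}{\delta}-\frac{\mu_{3}}{2}\normLdtwo{\delta}^{2}.
\]

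Adding the three inequalities, the inner-product terms combine into
\[
\productLdtwo{\nablaLdtwo\nuonefunc{\Un{n}}+\nablaLdtwo\nutwofunc{\Un{n+1}}-\nablaLdtwo\nuthreefunc{\Un{n}}}{\delta}.
\]
By the scheme in Definition~\ref{dfn:iwade}, this equals $-\frac{1}{\Dt}\normLdtwo{\delta}^{2}$. We therefore obtain
\[
\nusumfunc{\Un{n+1}}-\nusumfunc{\Un{n}}\leq\left(\frac{\LL-\mu_{2}-\mu_{3}}{2}-\frac{1}{\Dt}\right)\normLdtwo{\delta}^{2}.
\]

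The last step is to check the sign of the coefficient. If $\LL\leq\mu_{2}+\mu_{3}$, the coefficient is negative for every $\Dt>0$. Otherwise, the condition $\Dt\leq 2/(\LL-\mu_{2}-\mu_{3})$ is exactly equivalent to $\frac{1}{\Dt}\geq\frac{\LL-\mu_{2}-\mu_{3}}{2}$, so the coefficient is nonpositive. In both cases $\nusumfunc{\Un{n+1}}\leq\nusumfunc{\Un{n}}$. Existence of $\Un{n+1}$ for every $n$ is guaranteed by Theorem~\ref{thm:iwade_unique}, so the argument can be iterated over all $n$.

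The only delicate point is choosing the base point correctly in each inequality. The implicit term $\nutwo$ must be expanded around $\Un{n+1}$ and the explicit terms $\nuone,\nuthree$ around $\Un{n}$, so that the resulting gradients match exactly those appearing in the scheme. Note also that the $\mu_{3}$-strong convexity inequality holds for any real $\mu_{3}$ as stated in the definition, so no sign assumption on $\mu_{3}$ is needed. Beyond this, the proof is a direct computation.
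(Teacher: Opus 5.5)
Your proof is correct and is essentially the paper's argument: you use the $\LL$-smoothness bound for $\nuone$ at $\Un{n}$, strong convexity of $\nutwo$ expanded at $\Un{n+1}$ and of $\nuthree$ at $\Un{n}$, and then substitute the scheme. This yields the coefficient $-\left(\frac{1}{\Dt}-\frac{\LL-\mu_{2}-\mu_{3}}{2}\right)$ on $\normLdtwo{\Un{n+1}-\Un{n}}^{2}$, exactly as in the paper.
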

\begin{proof}
    Because $\nuone$ is $\LL$-smooth and $\nutwo,\nuthree$ are $\mu_{2}$- and $\mu_{3}$-strongly convex, respectively,
    \[
        \begin{split}
            \nuonefunc{\Un{n+1}}-\nuonefunc{\Un{n}}&\leq\productLdtwo{\nablaLdtwo\nuonefunc{\Un{n}}}{\Un{n+1}-\Un{n}}+\frac{\LL}{2}\normLdtwo{\Un{n+1}-\Un{n}}^{2},\\
            \nutwofunc{\Un{n+1}}-\nutwofunc{\Un{n}}&\leq\productLdtwo{\nablaLdtwo\nutwofunc{\Un{n+1}}}{\Un{n+1}-\Un{n}}\\
            &\quad\quad\quad\quad\quad\quad\quad\quad\quad\quad\quad\quad\quad\quad\quad\quad\quad-\frac{\mu_{2}}{2}\normLdtwo{\Un{n+1}-\Un{n}}^{2},\\
            \nuthreefunc{\Un{n+1}}-\nuthreefunc{\Un{n}}&\geq\productLdtwo{\nablaLdtwo\nuthreefunc{\Un{n}}}{\Un{n+1}-\Un{n}}+\frac{\mu_{3}}{2}\normLdtwo{\Un{n+1}-\Un{n}}^{2}.
        \end{split}
    \]
    Hence,
    \[
        \begin{split}
            &\quad\nusumfunc{\Un{n+1}}-\nusumfunc{\Un{n}}\\
            &\leq\productLdtwo{\nablaLdtwo\nuonefunc{\Un{n}}+\nablaLdtwo\nutwofunc{\Un{n+1}}-\nablaLdtwo\nuthreefunc{\Un{n}}}{\Un{n+1}-\Un{n}}\\
            &\quad+\frac{\LL-\mu_{2}-\mu_{3}}{2}\normLdtwo{\Un{n+1}-\Un{n}}^{2}\\
            &=-\left(\frac{1}{\Dt}-\frac{\LL-\mu_{2}-\mu_{3}}{2}\right)\normLdtwo{\Un{n+1}-\Un{n}}^{2}.
        \end{split}
    \]
    This is non-positive whenever $\LL\leq\mu_{2}+\mu_{3}$ or $\begin{cases}
        \LL>\mu_{2}+\mu_{3}\\
        \Dt\leq\frac{2}{\LL-\mu_{2}-\mu_{3}}
    \end{cases}$ holds.
\end{proof}
\begin{rem}
    In fact, for the Cahn--Hilliard equation, the \Iwade\ was already suggested in~\cite{CH_collision_1}, and \cite{CH_collision_2} proved that this scheme for the Cahn--Hilliard equation satisfies the energy-dissipation law.
\end{rem}

\subsection{Error Analysis for the General \Iwade}
We next give an error analysis for the general \Iwade.
\begin{thm}
    \label{thm:error_iwade}
    Assume that
    \begin{enumerate}
        \item $\nablaLdtwo\nutwo,\nablaLdtwo\nuthree,\nutiltwo$ are linear;
        \item $\nusum$ is bounded from below;
        \item the solution $\uu$ to \eqref{equ:exact_general} satisfies $\uu\in C^{2}$;
        \item $\nutiltwofunc{\pdv{\uu}{t}}$ is well-defined, and that $\Lhat\coloneqq\LL-\mu_{2}-\mu_{3}\neq 0$.
    \end{enumerate}
    Then, for any $\rr\in(0,1)$ and any positive constants $\kappan{1},\kappan{2}$ with $\kappan{1}+\kappan{2}<2$, if $\Lhat<0$ or $\begin{cases}
        \Lhat>0\\
        \Dt\leq\frac{\rr\kappan{2}}{\Lhat}
    \end{cases}$ holds, we obtain
    \[
        \begin{split}
            \normLdtwo{\en{n}}&\leq\sqrt{\frac{\kappan{1}\kappan{2}}{\left(2-\kappan{1}-\kappan{2}\right)\left(\kappan{1}\Lhat^{2}+4\kappan{2}\LL^{2}\right)}}\left(\CT\Dt+\max_{\nd\in\{0,\dots,n-1\}}\normLdtwo{\gamman{\nd+\frac{1}{2}}}\right)\\
            &\quad\quad\quad\quad\quad\quad\quad\quad\quad\quad\quad\quad\times\sqrt{\exp(\left(\frac{4\LL^{2}}{\kappan{1}\abs{\Lhat}}+\frac{\abs{\Lhat}}{\kappan{2}\left(1-\rr\thetafunc{\Lhat}\right)}\right)T)-1}
        \end{split}
    \]
    for a constant $\CT$ depending only on $T\coloneqq n\Dt$, $\nutiltwo$, and the exact solution. Here, $\thetafunc{\tilde{\eta}}$ is the step function
    \[
        \thetafunc{\tilde{\eta}}\coloneqq\begin{dcases}
            1 & (\tilde{\eta}>0),\\
            0 & (\tilde{\eta}\leq 0),
        \end{dcases}
    \]
    and
    \[
        \begin{split}
            \gamman{n+\frac{1}{2}}&\coloneqq\left\{\nablaLdtwo\nuonefunc{\un{n}}+\nablaLdtwo\nutwofunc{\un{n+1}}-\nablaLdtwo\nuthreefunc{\un{n}}\right\}\\
            &\quad-\left\{\iotaafunc{\nutilonefunc{\unn{n}}}+\iotaafunc{\nutiltwofunc{\unn{n+1}}}-\iotaafunc{\nutilthreefunc{\unn{n}}}\right\}.
        \end{split}
    \]
\end{thm}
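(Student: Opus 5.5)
The plan is to derive an error recursion of the form
\[
\normLdtwo{\en{n+1}}^{2}\le(1+c\Dt)\normLdtwo{\en{n}}^{2}+K\Dt\Bigl(\CT\Dt+\normLdtwo{\gamman{n+\frac{1}{2}}}\Bigr)^{2}
\]
and then close it with a discrete Gronwall argument. Summing the geometric series $\sum_{m<n}(1+c\Dt)^{m}\Dt=((1+c\Dt)^{n}-1)/c\le(e^{cT}-1)/c$ and using $\en{0}=0$ produces exactly the shape of the claimed bound. It has a square root of $e^{cT}-1$ with
\[
c=\frac{4\LL^{2}}{\kappan{1}\abs{\Lhat}}+\frac{\abs{\Lhat}}{\kappan{2}(1-\rr\thetafunc{\Lhat})},
\]
and a prefactor of order $\sqrt{K/c}$. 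I expect the rest to be bookkeeping of Young's inequalities with weights $\kappan{1},\kappan{2}$ chosen so that the constants match.

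\textbf{Step 1 (consistency).} Evaluate the PDE \eqref{equ:exact_general} at $t=n\Dt$ on the grid. Write $\un{n+1}-\un{n}=\Dt\,\dudtn{n}+O(\Dt^{2})$, using $\uu\in C^{2}$. Since $\nutiltwo$ is linear and $\nutiltwofunc{\pdv{\uu}{t}}$ is well defined, we also have
\[
\iotaafunc{\nutiltwofunc{\unn{n+1}}}=\iotaafunc{\nutiltwofunc{\unn{n}}}+O(\Dt).
\]
This yields
\[
\frac{\un{n+1}-\un{n}}{\Dt}=-\Bigl\{\nablaLdtwo\nuonefunc{\un{n}}+\nablaLdtwo\nutwofunc{\un{n+1}}-\nablaLdtwo\nuthreefunc{\un{n}}\Bigr\}+\gamman{n+\frac{1}{2}}+\tau^{(n)},
\]
with $\normLdtwo{\tau^{(n)}}\le\CT\Dt$, where $\CT$ depends only on $T$, $\nutiltwo$ and $\uu$.

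\textbf{Step 2 (error equation).} Subtract the identity of Step 1 from the scheme. Write $A_{2},A_{3}$ for the linear operators $\nablaLdtwo\nutwo,\nablaLdtwo\nuthree$ (affine in general, but the constants cancel). They are self-adjoint and positive, with $A_{2}\succeq\mu_{2}$ and $A_{3}\succeq\mu_{3}$. The error then satisfies
\[
\en{n+1}-\en{n}=-\Dt\Bigl\{\bigl(\nablaLdtwo\nuonefunc{\Un{n}}-\nablaLdtwo\nuonefunc{\un{n}}\bigr)+A_{2}\en{n+1}-A_{3}\en{n}+\gamman{n+\frac{1}{2}}+\tau^{(n)}\Bigr\}.
\]
The $\nuone$ difference is bounded by $\LL\normLdtwo{\en{n}}$ by $\LL$-smoothness. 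To exploit the same cancellation as in Theorem~\ref{thm:iwade_general_mu}, rewrite $A_{2}\en{n+1}=A_{2}\en{n}+A_{2}(\en{n+1}-\en{n})$. Then take the inner product of the error equation with $\en{n+1}-\en{n}$, and separately with $\en{n}$ or $\en{n+1}$. This mirrors the energy estimate and makes the combination $\Lhat=\LL-\mu_{2}-\mu_{3}$ appear as the coefficient of $\normLdtwo{\en{n+1}-\en{n}}^{2}$.

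\textbf{Step 3 (absorbing terms).} Apply Young's inequality twice. The first application uses weight $\kappan{1}$ and controls the cross term between the Lipschitz part ($\LL$) and the error increment; it produces the factor $4\LL^{2}/(\kappan{1}\abs{\Lhat})$. The second uses weight $\kappan{2}$ and controls the $\Lhat$ term.

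When $\Lhat>0$, the increment term carries the bad sign. The restriction $\Dt\le\rr\kappan{2}/\Lhat$ guarantees that the coefficient $1-\Dt\Lhat/\kappan{2}\ge1-\rr$ stays positive, so one may divide by it. This is the source of $1-\rr\thetafunc{\Lhat}$. When $\Lhat<0$, no restriction is needed.

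The forcing terms $\gamman{n+\frac{1}{2}}+\tau^{(n)}$ are handled with the remaining budget $2-\kappan{1}-\kappan{2}>0$. This explains the denominator $(2-\kappan{1}-\kappan{2})(\kappan{1}\Lhat^{2}+4\kappan{2}\LL^{2})$ once $K/c$ is simplified. Boundedness from below of $\nusum$ is used, if at all, only to keep the solution and the constants uniform on $[0,T]$. I expect this weighted Young's step, which has to reproduce the stated constants exactly, to be the main obstacle. Failing that, one obtains the same structure with cruder constants.
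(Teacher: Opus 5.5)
You have the outer shell right (consistency defect, error equation, Young's inequality with weights $\kappan{1},\kappan{2}$, division by $1-\Lhat\Dt/\kappan{2}\geq 1-\rr$ when $\Lhat>0$, Gronwall). The core step is missing, though: a Lyapunov functional other than $\normLdtwo{e}^{2}$. Your Steps~2--3 never say how the terms carrying $A_2$ and $A_3$ are controlled, and with $\normLdtwo{e}^{2}$ alone they cannot be. The target constant $c$ involves only $\LL$ and $\Lhat$, whereas $A_2,A_3$ have no mesh-independent bound. For Swift--Hohenberg with $\eta<1$, $A_2=(1-\eta)I+\varepsilon\delfour$ and $A_3=-2\deltwo$, with norms of order $(\Dx)^{-4}$ and $(\Dx)^{-2}$. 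Testing with $\en{n}$ or $\en{n+1}$ leaves cross terms such as $\productLdtwo{A_3\en{n}}{\en{n+1}}$ or $\productLdtwo{A_2 w}{\en{n}}$, where $w\coloneqq\en{n+1}-\en{n}$, and nothing in $\normLdtwo{\placeholder}$ absorbs them. Testing with $w$ instead gives, by symmetry of $A_2,A_3$,
\[
\productLdtwo{A_2\en{n+1}-A_3\en{n}}{w}=\frac{1}{2}\left(Q(\en{n+1})-Q(\en{n})\right)+\frac{1}{2}\productLdtwo{(A_2+A_3)w}{w},\qquad Q(e)\coloneqq\productLdtwo{(A_2-A_3)e}{e}.
\]
So what telescopes is the indefinite quadratic form $Q$. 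The proof therefore needs a Lyapunov functional containing $Q$, plus a reason that this functional dominates $\normLdtwo{e}^{2}$.

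That reason is hypothesis~2, which you set aside as used ``if at all'' for uniformity; in fact it is indispensable. Take $\nuone=\nutwo=0$ and $\nuthree=\phithree$. Then $\mu_2=\mu_3=0$, $\Lhat=\LL$, and $\nusum=-\phithree$ is unbounded below. The error propagator is $I-2\Dt\deltwo$, which amplifies the top eigenmode of $-\deltwo$ by $1+2\Dt\lambda_{\max}$ per step, with $\lambda_{\max}$ of order $(\Dx)^{-2}$. No estimate whose constants depend only on $\LL$ and $\Lhat$ can survive this. For the same reason, the $\abs{\Lhat}$ in $4\LL^{2}/(\kappan{1}\abs{\Lhat})$ and in $\abs{\Lhat}/\kappan{2}$ does not come from Young's inequality, which gives only $2\LL^{2}\Dt/\kappan{1}$ and $\Lhat^{2}\Dt/(2\kappan{2})$. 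It comes from the coercivity constant $\abs{\Lhat}/2$ of that functional.

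The paper uses $\chimfunc{\mm}{e}=\nusumfunc{\Um{\mm}+e}-\inf\nusum+\frac{\abs{\Lhat}}{2}\normLdtwo{e}^{2}$, where $\Um{\mm}$ is a minimizing sequence with $\nablaLdtwo\nusumfunc{\Um{\mm}}\to0$; this is where boundedness from below enters, giving $\chimfunc{\mm}{e}\geq\frac{\abs{\Lhat}}{2}\normLdtwo{e}^{2}$. The smoothness and convexity inequalities, linearity, and the error equation then yield $\chimfunc{\mm}{\en{n+1}}-\chimfunc{\mm}{\en{n}}\leq(2\LL\normLdtwo{\en{n}}+\normLdtwo{g}+\abs{\Lhat}\normLdtwo{\en{n+\thetafunc{\Lhat}}})\normLdtwo{w}-\frac{1}{\Dt}\normLdtwo{w}^{2}$, with $g=\gamman{n+\frac{1}{2}}+\gammatiln{n+\frac{1}{2}}-\nablaLdtwo\nusumfunc{\Um{\mm}}$. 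Young's inequality with weights $\kappan{1}$, $2-\kappan{1}-\kappan{2}$, $\kappan{2}$ cancels the $\normLdtwo{w}^{2}$ terms exactly. The recursion and Gronwall step are run on $\chimfunc{\mm}{\cdot}$, and $\mm\to\infty$ is taken at the end.

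Your route can be repaired, and more simply. Set $\mathcal{E}(e)\coloneqq\frac{1}{2}Q(e)+\frac{\LL+\abs{\Lhat}}{2}\normLdtwo{e}^{2}$.
1. Evaluate hypothesis~2 together with the $\LL$-smoothness of $\nuone$ at $sV$ and let $s\to\infty$. This gives $A_2-A_3+\LL I\succeq0$, hence $\mathcal{E}(e)\geq\frac{\abs{\Lhat}}{2}\normLdtwo{e}^{2}$.
2. Test the error equation only with $w$. Use $A_2\succeq\mu_2$, $A_3\succeq\mu_3$, and $\frac{\LL}{2}(\normLdtwo{\en{n+1}}^{2}-\normLdtwo{\en{n}}^{2})=\LL\productLdtwo{\en{n}}{w}+\frac{\LL}{2}\normLdtwo{w}^{2}$. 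This reproduces the paper's one-step inequality, now with $g=\gamman{n+\frac{1}{2}}+\gammatiln{n+\frac{1}{2}}$.
3. Since $\mathcal{E}(\en{0})=0$, no minimizing sequence or limit in $\mm$ is needed.

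As written, however, your proposal lacks any such functional and does not close.
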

\par Its proof will be given after Remark~\ref{rem:not-essential}.
\begin{rem}
    In Theorem~\ref{thm:error_iwade}, the parameters $\kappan{1}$ and $\kappan{2}$ arise from applications of Young's inequality in the proof, while the parameter $\rr$ is introduced in order to absorb an undesirable $\Dt$-dependent term from the error bound. Since $\rr$ and $\kappan{2}$ affect both the time-step restriction and the constants in the bound, it is not straightforward to identify the optimal choice of $\rr,\kappan{1},\kappan{2}$. For the reader's convenience, we also state a simplified version with fixed parameters.
\end{rem}
\begin{cor}
    \label{cor:error_iwade_special}
    Assume that the four assumptions in Theorem~\ref{thm:error_iwade} hold. Then, if $\Lhat<0$ or $\begin{cases}
        \Lhat>0\\
        \Dt\leq\frac{1}{3\Lhat}
    \end{cases}$ holds, for a constant $\CT$ depending only on $T$, $\nutiltwo$, and the exact solution, and a constant $\CTd$ depending only on $\LL,\Lhat,T$,
    \[
        \normLdtwo{\en{n}}\leq\CTd\left(\CT\Dt+\max_{\nd\in\{0,\dots,n-1\}}\normLdtwo{\gamman{\nd+\frac{1}{2}}}\right).
    \]
\end{cor}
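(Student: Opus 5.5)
The corollary follows from Theorem~\ref{thm:error_iwade} by fixing the free parameters. I will take $\kappan{1}=\kappan{2}=\frac{2}{3}$, so that $\kappan{1}+\kappan{2}=\frac{4}{3}<2$, and $\rr=\frac{1}{2}$. With these choices $\frac{\rr\kappan{2}}{\Lhat}=\frac{1}{3\Lhat}$. Hence the time-step hypothesis of the corollary ($\Lhat<0$, or $\Lhat>0$ together with $\Dt\leq\frac{1}{3\Lhat}$) is exactly the hypothesis of Theorem~\ref{thm:error_iwade} for these parameters. The four structural assumptions are assumed verbatim, and $\Lhat\neq 0$ is among them.

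Applying the theorem gives
\[
\normLdtwo{\en{n}}\leq\sqrt{\frac{\kappan{1}\kappan{2}}{(2-\kappan{1}-\kappan{2})(\kappan{1}\Lhat^{2}+4\kappan{2}\LL^{2})}}\left(\CT\Dt+\max_{\nd}\normLdtwo{\gamman{\nd+\frac{1}{2}}}\right)\sqrt{\exp(\Theta T)-1},
\]
where $\Theta\coloneqq\frac{4\LL^{2}}{\kappan{1}\abs{\Lhat}}+\frac{\abs{\Lhat}}{\kappan{2}(1-\rr\thetafunc{\Lhat})}$. With the chosen parameters, $1-\rr\thetafunc{\Lhat}\in\{\frac{1}{2},1\}$, depending only on the sign of $\Lhat$. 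Therefore
\[
\Theta=\frac{6\LL^{2}}{\abs{\Lhat}}+\frac{3\abs{\Lhat}}{2(1-\frac{1}{2}\thetafunc{\Lhat})},
\]
which depends only on $\LL$ and $\Lhat$. The prefactor simplifies to
\[
\sqrt{\frac{4/9}{(2/3)\cdot\frac{2}{3}(\Lhat^{2}+4\LL^{2})}}=\frac{1}{\sqrt{\Lhat^{2}+4\LL^{2}}},
\]
which is finite because $\Lhat\neq 0$.

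I will set
\[
\CTd\coloneqq\frac{\sqrt{\exp(\Theta T)-1}}{\sqrt{\Lhat^{2}+4\LL^{2}}},
\]
which depends only on $\LL$, $\Lhat$ and $T$. The constant $\CT$ is the same one supplied by the theorem, so its dependence on $T$, $\nutiltwo$ and the exact solution is inherited. This yields the stated inequality.

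There is no real obstacle. The only point needing care is checking that the parameter choice reproduces exactly the restriction $\Dt\leq\frac{1}{3\Lhat}$, and that the exponential rate is finite and independent of the sign issue in both cases of $\thetafunc{\Lhat}$.
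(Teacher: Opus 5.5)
Your proposal is correct and is exactly the paper's argument: the paper also obtains the corollary by substituting $r=\frac{1}{2}$ and $\kappa_1=\kappa_2=\frac{2}{3}$ into Theorem~\ref{thm:error_iwade}. Your explicit simplifications are correct and simply spell out the paper's one-line proof: the restriction $\Delta t\le \frac{1}{3\hat{L}}$, the prefactor $1/\sqrt{\hat{L}^2+4L^2}$, and the exponent $\frac{6L^2}{|\hat{L}|}+\frac{3|\hat{L}|}{2(1-\frac{1}{2}\theta(\hat{L}))}$.
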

\begin{proof}
    It follows straightforwardly by setting $\rr=\frac{1}{2}$ and $\kappan{1}=\kappan{2}=\frac{2}{3}$ in Theorem~\ref{thm:error_iwade}. Note that the choice is made for simplicity and is not intended to be optimal.
\end{proof}
\begin{rem}
    In Theorem~\ref{thm:error_iwade} and Corollary~\ref{cor:error_iwade_special}, we bound $\normLdtwo{\en{n}}$ from above. The term $\CT\Dt$ corresponds to the time-discretization error, while $\max_{\nd\in\{0,\dots,n-1\}}\normLdtwo{\gamman{\nd+\frac{1}{2}}}$ corresponds to the spatial-discretization error. Hence, the time-discretization error vanishes as $\Dt\to 0$.
\end{rem}
\begin{rem}
    \label{rem:not-essential}
    We assume that $\Lhat\neq 0$ in Theorem~\ref{thm:error_iwade} and Corollary~\ref{cor:error_iwade_special}. This is not essential, since an $\LL$-smooth function is also $\LL'$-smooth for any $\LL'\geq\LL$.
\end{rem}
\begin{proof}[Proof of Theorem~\ref{thm:error_iwade}]
    First observe that
    \[
        \begin{split}
            \frac{\en{n+1}-\en{n}}{\Dt}&=\left(\frac{\Un{n+1}-\Un{n}}{\Dt}-\dudtn{n}\right)-\left(\frac{\un{n+1}-\un{n}}{\Dt}-\dudtn{n}\right)\\
            &=-\left(\left\{\nablaLdtwo\nuonefunc{\Un{n}}+\nablaLdtwo\nutwofunc{\Un{n+1}}-\nablaLdtwo\nuthreefunc{\Un{n}}\right\}\right.\\
            &\quad\quad\quad\left.-\left\{\nablaLdtwo\nuonefunc{\un{n}}+\nablaLdtwo\nutwofunc{\un{n+1}}-\nablaLdtwo\nuthreefunc{\un{n}}\right\}\right)\\
            &\quad-\left(\left\{\nablaLdtwo\nuonefunc{\un{n}}+\nablaLdtwo\nutwofunc{\un{n+1}}-\nablaLdtwo\nuthreefunc{\un{n}}\right\}\right.\\
            &\quad\quad\quad\left.-\left\{\iotaafunc{\nutilonefunc{\unn{n}}}+\iotaafunc{\nutiltwofunc{\unn{n}}}-\iotaafunc{\nutilthreefunc{\unn{n}}}\right\}\right)\\
            &\quad-\left(\frac{\un{n+1}-\un{n}}{\Dt}-\dudtn{n}\right)\\
            &=-\left\{\nablaLdtwo\nuonefunc{\Un{n}}-\nablaLdtwo\nuonefunc{\un{n}}+\nablaLdtwo\nutwofunc{\en{n+1}}-\nablaLdtwo\nuthreefunc{\en{n}}\right\}\\
            &\quad-\gamman{n+\frac{1}{2}}-\gammatiln{n+\frac{1}{2}},
        \end{split}
    \]
    where
    \[
        \gammatiln{n+\frac{1}{2}}\coloneqq\iotaafunc{\nutiltwofunc{\unn{n+1}-\unn{n}}}+\left(\frac{\un{n+1}-\un{n}}{\Dt}-\dudtn{n}\right).
    \]
    Define
    \[
        \chimfunc{\mm}{\UU}\coloneqq\nusumfunc{\Um{\mm}+\UU}-\inf_{\hat{\UU}\in\Ldtwo}\nusumfunc{\hat{\UU}}+\frac{\abs{\Lhat}}{2}\normLdtwo{\UU}^{2},
    \]
    where $\left\{\Um{\mm}\right\}$ is a sequence in $\Ldtwo$ such that
    \[
        \lim_{\mm\to\infty}\normLdtwo{\nablaLdtwo\nusumfunc{\Um{\mm}}}=0.
    \]
    Such a sequence exists by Corollary~4.1 in~\cite{bounded_gradient_sequence}. Since $\nuone,\nutwo,\nuthree$ are $\LL$-smooth, $\mu_{2}$-strongly convex, and $\mu_{3}$-strongly convex, respectively, we obtain
    \[
        \begin{split}
            &\quad\chimfunc{\mm}{\en{n+1}}-\chimfunc{\mm}{\en{n}}\\
            &\leq\productLdtwo{\nablaLdtwo\nuonefunc{\Um{\mm}+\en{n}}+\nablaLdtwo\nutwofunc{\Um{\mm}+\en{n+1}}\right.\\
            &\quad\quad\left.-\nablaLdtwo\nuthreefunc{\Um{\mm}+\en{n}}+\frac{\abs{\Lhat}}{2}\left(\en{n+1}+\en{n}\right)}{\en{n+1}-\en{n}}\\
            &\quad+\frac{\Lhat}{2}\normLdtwo{\en{n+1}-\en{n}}^{2}\\
            &=\productLdtwo{\nablaLdtwo\nuonefunc{\Um{\mm}+\en{n}}+\left(\nablaLdtwo\nutwofunc{\Um{\mm}}-\nablaLdtwo\nuthreefunc{\Um{\mm}}\right)\right.\\
            &\quad\quad\left.+\left(\nablaLdtwo\nutwofunc{\en{n+1}}-\nablaLdtwo\nuthreefunc{\en{n}}\right)-(-1)^{\thetafunc{\Lhat}}\Lhat\en{n+\thetafunc{\Lhat}}}{\en{n+1}-\en{n}}\\
            &=\productLdtwo{\nablaLdtwo\nuonefunc{\Um{\mm}+\en{n}}+\nablaLdtwo\nusumfunc{\Um{\mm}}-\nablaLdtwo\nuonefunc{\Um{\mm}}\right.\\
            &\quad\quad\quad\quad\quad\quad\quad-\frac{\en{n+1}-\en{n}}{\Dt}-\nablaLdtwo\nuonefunc{\Un{n}}+\nablaLdtwo\nuonefunc{\un{n}}\\
            &\quad\quad\quad\quad\quad\quad\quad\quad\quad\quad\left.-\gamman{n+\frac{1}{2}}-\gammatiln{n+\frac{1}{2}}-(-1)^{\thetafunc{\Lhat}}\Lhat\en{n+\thetafunc{\Lhat}}}{\en{n+1}-\en{n}}\\
            &\leq\left(\normLdtwo{\nablaLdtwo\nuonefunc{\Um{\mm}+\en{n}}-\nablaLdtwo\nuonefunc{\Um{\mm}}}+\normLdtwo{\nablaLdtwo\nuonefunc{\Un{n}}-\nablaLdtwo\nuonefunc{\un{n}}}\right.\\
            &\quad\quad\left.+\normLdtwo{\gamman{n+\frac{1}{2}}+\gammatiln{n+\frac{1}{2}}-\nablaLdtwo\nusumfunc{\Um{\mm}}}+\abs{\Lhat}\normLdtwo{\en{n+\thetafunc{\Lhat}}}\right)\normLdtwo{\en{n+1}-\en{n}}\\
            &\quad-\frac{1}{\Dt}\normLdtwo{\en{n+1}-\en{n}}^{2}\\
            &\leq\left(2\LL\normLdtwo{\en{n}}+\normLdtwo{\gamman{n+\frac{1}{2}}+\gammatiln{n+\frac{1}{2}}-\nablaLdtwo\nusumfunc{\Um{\mm}}}\right.\\
            &\quad\quad\quad\quad\quad\quad\quad\quad\quad\quad\quad\quad\quad\quad\quad\quad\quad\quad\quad\quad\left.+\abs{\Lhat}\normLdtwo{\en{n+\thetafunc{\Lhat}}}\right)\normLdtwo{\en{n+1}-\en{n}}\\
            &\quad-\frac{1}{\Dt}\normLdtwo{\en{n+1}-\en{n}}^{2}\\
            &\leq\left(\frac{2\LL^{2}\Dt}{\kappan{1}}\normLdtwo{\en{n}}^{2}+\frac{\kappan{1}}{2\Dt}\normLdtwo{\en{n+1}-\en{n}}^{2}\right)\\
            &\quad+\left(\frac{\Dt}{2\left(2-\kappan{1}-\kappan{2}\right)}\normLdtwo{\gamman{n+\frac{1}{2}}+\gammatiln{n+\frac{1}{2}}-\nablaLdtwo\nusumfunc{\Um{\mm}}}^{2}\right.\\
            &\quad\quad\quad\quad\quad\quad\quad\quad\quad\quad\quad\quad\quad\quad\quad\quad\quad\quad\quad\quad\left.+\frac{2-\kappan{1}-\kappan{2}}{2\Dt}\normLdtwo{\en{n+1}-\en{n}}^{2}\right)\\
            &\quad+\left(\frac{\Lhat^{2}\Dt}{2\kappan{2}}\normLdtwo{\en{n+\thetafunc{\Lhat}}}^{2}+\frac{\kappan{2}}{2\Dt}\normLdtwo{\en{n+1}-\en{n}}^{2}\right)-\frac{1}{\Dt}\normLdtwo{\en{n+1}-\en{n}}^{2}\\
        \end{split}
    \]
    \[
        \begin{split}
            &\leq\frac{4\LL^{2}\Dt}{\kappan{1}\abs{\Lhat}}\chimfunc{\mm}{\en{n}}+\frac{\abs{\Lhat}\Dt}{\kappan{2}}\chimfunc{\mm}{\en{n+\thetafunc{\Lhat}}}\\
            &\quad+\frac{\Dt}{2\left(2-\kappan{1}-\kappan{2}\right)}\normLdtwo{\gamman{n+\frac{1}{2}}+\gammatiln{n+\frac{1}{2}}-\nablaLdtwo\nusumfunc{\Um{\mm}}}^{2}.
        \end{split}
    \]
    The last inequality follows from $\chimfunc{\mm}{\UU}\geq\frac{\abs{\Lhat}}{2}\normLdtwo{\UU}^{2}$. Hence, since $\Dt\leq\frac{\rr\kappan{2}}{\Lhat}<\frac{\kappan{2}}{\Lhat}$ holds whenever $\Lhat>0$, 
    \[
        \begin{split}
            &\quad\chimfunc{\mm}{\en{n+1}}\\
            \leq&\begin{dcases}
                \frac{\kappan{2}\left(\kappan{1}\Lhat+4\LL^{2}\Dt\right)}{\kappan{1}\Lhat\left(\kappan{2}-\Lhat\Dt\right)}\chimfunc{\mm}{\en{n}}\\
                \quad+\frac{\kappan{2}\Dt}{2\left(2-\kappan{1}-\kappan{2}\right)\left(\kappan{2}-\Lhat\Dt\right)}\normLdtwo{\gamman{n+\frac{1}{2}}+\gammatiln{n+\frac{1}{2}}-\nablaLdtwo\nusumfunc{\Um{\mm}}}^{2} & (\Lhat>0),\\
                \left(1+\frac{4\LL^{2}\Dt}{\kappan{1}\abs{\Lhat}}+\frac{\abs{\Lhat}\Dt}{\kappan{2}}\right)\chimfunc{\mm}{\en{n}}\\
                \quad+\frac{\Dt}{2\left(2-\kappan{1}-\kappan{2}\right)}\normLdtwo{\gamman{n+\frac{1}{2}}+\gammatiln{n+\frac{1}{2}}-\nablaLdtwo\nusumfunc{\Um{\mm}}}^{2} & (\Lhat<0).\\
            \end{dcases}
        \end{split}
    \]
    Therefore,
    \[
        \begin{split}
            &\quad\chimfunc{\mm}{\en{n}}\\
            &\leq\begin{dcases}
                \left(\frac{\kappan{2}\left(\kappan{1}\Lhat+4\LL^{2}\Dt\right)}{\kappan{1}\Lhat\left(\kappan{2}-\Lhat\Dt\right)}\right)^{n}\chimfunc{\mm}{\en{0}}\\
                \quad+\sum_{\nd=0}^{n-1}\frac{\kappan{2}\Dt}{2\left(2-\kappan{1}-\kappan{2}\right)\left(\kappan{2}-\Lhat\Dt\right)}\left(\frac{\kappan{2}\left(\kappan{1}\Lhat+4\LL^{2}\Dt\right)}{\kappan{1}\Lhat\left(\kappan{2}-\Lhat\Dt\right)}\right)^{\nd}\\
                \quad\quad\quad\quad\times\normLdtwo{\gamman{n-\nd-\frac{1}{2}}+\gammatiln{n-\nd-\frac{1}{2}}-\nablaLdtwo\nusumfunc{\Um{\mm}}}^{2} & (\Lhat>0)\\
                \left(1+\frac{4\LL^{2}\Dt}{\kappan{1}\abs{\Lhat}}+\frac{\abs{\Lhat}\Dt}{\kappan{2}}\right)^{n}\chimfunc{\mm}{\en{0}}\\
                \quad+\sum_{\nd=0}^{n-1}\frac{\Dt}{2\left(2-\kappan{1}-\kappan{2}\right)}\left(1+\frac{4\LL^{2}\Dt}{\kappan{1}\abs{\Lhat}}+\frac{\abs{\Lhat}\Dt}{\kappan{2}}\right)^{\nd}\\
                \quad\quad\quad\quad\times\normLdtwo{\gamman{n-\nd-\frac{1}{2}}+\gammatiln{n-\nd-\frac{1}{2}}-\nablaLdtwo\nusumfunc{\Um{\mm}}}^{2} & (\Lhat<0)
            \end{dcases}\\
        \end{split}
    \]
    \[
        \begin{split}
            &=\begin{dcases}
                \left(\frac{\kappan{2}\left(\kappan{1}\Lhat+4\LL^{2}\Dt\right)}{\kappan{1}\Lhat\left(\kappan{2}-\Lhat\Dt\right)}\right)^{n}\chimfunc{\mm}{\en{0}}\\
                \quad+\frac{\displaystyle\kappan{2}\Dt\cdot\max_{\nd\in\{0,\dots,n-1\}}\normLdtwo{\gamman{\nd+\frac{1}{2}}+\gammatiln{\nd+\frac{1}{2}}-\nablaLdtwo\nusumfunc{\Um{\mm}}}^{2}}{2\left(2-\kappan{1}-\kappan{2}\right)\left(\kappan{2}-\Lhat\Dt\right)}\\
                \quad\quad\quad\quad\quad\quad\quad\quad\quad\quad\quad\quad\quad\quad\quad\times\frac{1-\left(\frac{\kappan{2}\left(\kappan{1}\Lhat+4\LL^{2}\Dt\right)}{\kappan{1}\Lhat\left(\kappan{2}-\Lhat\Dt\right)}\right)^{n}}{1-\frac{\kappan{2}\left(\kappan{1}\Lhat+4\LL^{2}\Dt\right)}{\kappan{1}\Lhat\left(\kappan{2}-\Lhat\Dt\right)}} & (\Lhat>0)\\
                \left(1+\frac{4\LL^{2}\Dt}{\kappan{1}\abs{\Lhat}}+\frac{\abs{\Lhat}\Dt}{\kappan{2}}\right)^{n}\chimfunc{\mm}{\en{0}}\\
                \quad+\frac{\displaystyle\Dt\cdot\max_{\nd\in\{0,\dots,n-1\}}\normLdtwo{\gamman{\nd+\frac{1}{2}}+\gammatiln{\nd+\frac{1}{2}}-\nablaLdtwo\nusumfunc{\Um{\mm}}}^{2}}{2\left(2-\kappan{1}-\kappan{2}\right)}\\
                \quad\quad\quad\quad\quad\quad\quad\quad\quad\quad\quad\quad\quad\quad\quad\times\frac{1-\left(1+\frac{4\LL^{2}\Dt}{\kappan{1}\abs{\Lhat}}+\frac{\abs{\Lhat}\Dt}{\kappan{2}}\right)^{n}}{1-\left(1+\frac{4\LL^{2}\Dt}{\kappan{1}\abs{\Lhat}}+\frac{\abs{\Lhat}\Dt}{\kappan{2}}\right)} & (\Lhat<0)
            \end{dcases}\\
            &\leq\begin{dcases}
                \left(\frac{\kappan{2}\left(\kappan{1}\Lhat+4\LL^{2}\Dt\right)}{\kappan{1}\Lhat\left(\kappan{2}-\Lhat\Dt\right)}\right)^{n}\chimfunc{\mm}{\en{0}}\\
                \quad+\frac{\displaystyle\kappan{1}\kappan{2}\Lhat\max_{\nd\in\{0,\dots,n-1\}}\normLdtwo{\gamman{\nd+\frac{1}{2}}+\gammatiln{\nd+\frac{1}{2}}-\nablaLdtwo\nusumfunc{\Um{\mm}}}^{2}}{2\left(2-\kappan{1}-\kappan{2}\right)\left(\kappan{1}\Lhat^{2}+4\kappan{2}\LL^{2}\right)}\\
                \quad\quad\quad\quad\quad\quad\quad\quad\quad\quad\quad\quad\quad\quad\quad\times\left\{\left(\frac{\kappan{2}\left(\kappan{1}\Lhat+4\LL^{2}\Dt\right)}{\kappan{1}\Lhat\left(\kappan{2}-\Lhat\Dt\right)}\right)^{n}-1\right\} & (\Lhat>0),\\
                \left(1+\frac{4\LL^{2}\Dt}{\kappan{1}\abs{\Lhat}}+\frac{\abs{\Lhat}\Dt}{\kappan{2}}\right)^{n}\chimfunc{\mm}{\en{0}}\\
                \quad+\frac{\displaystyle\kappan{1}\kappan{2}\abs{\Lhat}\max_{\nd\in\{0,\dots,n-1\}}\normLdtwo{\gamman{\nd+\frac{1}{2}}+\gammatiln{\nd+\frac{1}{2}}-\nablaLdtwo\nusumfunc{\Um{\mm}}}^{2}}{2\left(2-\kappan{1}-\kappan{2}\right)\left(\kappan{1}\Lhat^{2}+4\kappan{2}\LL^{2}\right)}\\
                \quad\quad\quad\quad\quad\quad\quad\quad\quad\quad\quad\quad\quad\quad\quad\times\left\{\left(1+\frac{4\LL^{2}\Dt}{\kappan{1}\abs{\Lhat}}+\frac{\abs{\Lhat}\Dt}{\kappan{2}}\right)^{n}-1\right\} & (\Lhat<0).
            \end{dcases}
        \end{split}
    \]
    Here, 
    \[
        \begin{split}
            \frac{\kappan{2}\left(\kappan{1}\Lhat+4\LL^{2}\Dt\right)}{\kappan{1}\Lhat\left(\kappan{2}-\Lhat\Dt\right)}&=\left(1+\frac{4\LL^{2}\Dt}{\kappan{1}\Lhat}\right)\left(1+\frac{\frac{\Lhat\Dt}{\kappan{2}}}{1-\frac{\Lhat\Dt}{\kappan{2}}}\right)\\
            &\leq\exp(\left(\frac{4\LL^{2}}{\kappan{1}\Lhat}+\frac{\Lhat}{\kappan{2}-\Lhat\Dt}\right)\Dt),
        \end{split}
    \]
    and
    \[
        1+\frac{4\LL^{2}\Dt}{\kappan{1}\abs{\Lhat}}+\frac{\abs{\Lhat}\Dt}{\kappan{2}}\leq\exp(\left(\frac{4\LL^{2}}{\kappan{1}\abs{\Lhat}}+\frac{\abs{\Lhat}}{\kappan{2}}\right)\Dt)
    \]
    hold. Combining these relationships with $\chimfunc{\mm}{\UU}\geq\frac{\abs{\Lhat}}{2}\normLdtwo{\UU}^{2}$, using $\en{0}=0$, and taking the limit $\mm\to\infty$, we derive
    \[
        \begin{split}
            &\quad\normLdtwo{\en{n}}^{2}\\
            &\leq\begin{dcases}
                \frac{\kappan{1}\kappan{2}}{\left(2-\kappan{1}-\kappan{2}\right)\left(\kappan{1}\Lhat^{2}+4\kappan{2}\LL^{2}\right)}\max_{\nd\in\{0,\dots,n-1\}}\normLdtwo{\gamman{\nd+\frac{1}{2}}+\gammatiln{\nd+\frac{1}{2}}}^{2}\\
                \quad\quad\quad\quad\quad\quad\quad\quad\quad\quad\quad\quad\times\left\{\exp(\left(\frac{4\LL^{2}}{\kappan{1}\Lhat}+\frac{\Lhat}{\kappan{2}-\Lhat\Dt}\right)T)-1\right\} & (\Lhat>0),\\
                \frac{\kappan{1}\kappan{2}}{\left(2-\kappan{1}-\kappan{2}\right)\left(\kappan{1}\Lhat^{2}+4\kappan{2}\LL^{2}\right)}\max_{\nd\in\{0,\dots,n-1\}}\normLdtwo{\gamman{\nd+\frac{1}{2}}+\gammatiln{\nd+\frac{1}{2}}}^{2}\\
                \quad\quad\quad\quad\quad\quad\quad\quad\quad\quad\quad\quad\times\left\{\exp(\left(\frac{4\LL^{2}}{\kappan{1}\abs{\Lhat}}+\frac{\abs{\Lhat}}{\kappan{2}}\right)T)-1\right\} & (\Lhat<0),
            \end{dcases}
        \end{split}
    \]
    namely
    \[
        \begin{split}
            \normLdtwo{\en{n}}&\leq\sqrt{\frac{\kappan{1}\kappan{2}}{\left(2-\kappan{1}-\kappan{2}\right)\left(\kappan{1}\Lhat^{2}+4\kappan{2}\LL^{2}\right)}}\max_{\nd\in\{0,\dots,n-1\}}\normLdtwo{\gamman{\nd+\frac{1}{2}}+\gammatiln{\nd+\frac{1}{2}}}\\
            &\quad\times\sqrt{\exp(\left(\frac{4\LL^{2}}{\kappan{1}\abs{\Lhat}}+\frac{\abs{\Lhat}}{\kappan{2}\left(1-\rr\thetafunc{\Lhat}\right)}\right)T)-1},
        \end{split}
    \]
    since $\Dt\leq\frac{\rr\kappan{2}}{\Lhat}$ whenever $\Lhat>0$. Finally, $\gammatiln{\nd+\frac{1}{2}}$ can be expressed as a constant multiple of $\Dt$ by Taylor's theorem, with coefficient depending only on $\nutiltwo$, $\uu$, and $\pdv{\uu}{t}$. Hence,
    \[
        \begin{split}
            \normLdtwo{\en{n}}&\leq\sqrt{\frac{\kappan{1}\kappan{2}}{\left(2-\kappan{1}-\kappan{2}\right)\left(\kappan{1}\Lhat^{2}+4\kappan{2}\LL^{2}\right)}}\max_{\nd\in\{0,\dots,n-1\}}\left(\normLdtwo{\gamman{\nd+\frac{1}{2}}}+\CT\Dt\right)\\
            &\quad\times\sqrt{\exp(\left(\frac{4\LL^{2}}{\kappan{1}\abs{\Lhat}}+\frac{\abs{\Lhat}}{\kappan{2}\left(1-\rr\thetafunc{\Lhat}\right)}\right)T)-1}.
        \end{split}
    \]
\end{proof}
\section{A \Iwade \ for the Swift--Hohenberg Equation}
\label{sec:A_Dissipation-Preserving_Scheme_for_the_Swift--Hohenberg_Equation}
In this section, we propose a new numerical scheme for the Swift--Hohenberg equation; we prove unique solvability at each time step, establish an energy-dissipation law and uniform bounds, and derive an error estimate, building on Section~\ref{sec:Dissipation-Preserving_Scheme_Framework}. We also discuss the scheme's asymptotic behavior.
\subsection{Properties of the Swift--Hohenberg equation}
\label{subsec:Properties_of_the_Swift--Hohenberg_equation}
We begin by briefly recalling basic properties of the Swift--Hohenberg equation \eqref{equ:SH}. It satisfies an energy-dissipation law, which follows from a straightforward calculation:
\[
    \dv{\Htruefunc{\uu}}{t}=-\int_{\Omega}\left(\pdv{\uu}{t}\right)^{2}\dd{\bm{x}}\leq 0.
\]
Moreover, classical solutions are uniformly bounded in time; specifically, for all $t\geq 0$,
\begin{equation}
    \label{equ:u_bound}
    \max_{(x,y,z)\in\Omega}\left|u\right|\leq\sqrt{\frac{\CCd^{2}}{\Ceez}\left(\Htruefunc{\uu_{0}}+\frac{\zetaa^{2}}{4}|\Omega|\right)},
\end{equation}
where $\uu_{0}$ is the initial condition, $\zetaa>\frac{1}{\varepsilon}+\eta-1$ is an arbitrary constant, $\CCd$ is a constant, and $\Ceez>0$ is defined by
\[
    \Ceez\coloneqq\frac{\varepsilon+1-\eta+\zetaa-\sqrt{(\varepsilon-1+\eta-\zetaa)^{2}+4}}{6}.
\]
The proof is given in the Appendix.
\subsection{Proposed Numerical Scheme}
We now introduce the \Iwade \ for the Swift--Hohenberg equation.
\begin{dfn}
    \label{dfn:iwade_SH}
    Define the scheme for the Swift--Hohenberg equation as follows:
    \[
        \frac{\Un{n+1}-\Un{n}}{\Dt}=\begin{dcases}
            -\left[\Un{n}^{3}+\left\{(1-\eta)\Un{n+1}+\varepsilon\delfour\Un{n+1}\right\}+2\deltwo\Un{n}\right] & (\eta<1),\\
            -\left[\Un{n}^{3}+\varepsilon\delfour\Un{n+1}-\left\{(\eta-1)\Un{n}-2\deltwo\Un{n}\right\}\right] & (\eta\geq 1),
        \end{dcases}
    \]
    with the following boundary conditions:
    \[
        \begin{dcases}
            \deliU{0}=\deliU{\Nx}=0,\\
            \deljU{0}=\deljU{\Ny}=0,\\
            \delkU{0}=\delkU{\Nz}=0,\\
            \delidelijkU{0}=\delidelijkU{\Nx}=0,\\
            \deljdelijkU{0}=\deljdelijkU{\Ny}=0,\\
            \delkdelijkU{0}=\delkdelijkU{\Nz}=0.\\
        \end{dcases}
    \]
\end{dfn}
\begin{cor}
    \label{cor:dfn_phi}
    Let
    \[
        \begin{dcases}
            \phionefunc{\UU}\coloneqq\sumd{i=0}{\Nx}\sumd{j=0}{\Ny}\sumd{k=0}{\Nz}\frac{1}{4}\Uijk{i}{j}{k}^{4}\Dx\Dy\Dz,\\
            \phitwofunc{\UU}\coloneqq\sumd{i=0}{\Nx}\sumd{j=0}{\Ny}\sumd{k=0}{\Nz}\frac{1-\eta}{2}\Uijk{i}{j}{k}^{2}\Dx\Dy\Dz,\\
            \phithreefunc{\UU}\coloneqq\sumd{i=0}{\Nx}\sumd{j=0}{\Ny}\sumd{k=0}{\Nz}\frac{\left|\nabladp\Uijk{i}{j}{k}\right|^{2}+\left|\nabladm\Uijk{i}{j}{k}\right|^{2}}{2}\Dx\Dy\Dz,\\
            \phifourfunc{\UU}\coloneqq\sumd{i=0}{\Nx}\sumd{j=0}{\Ny}\sumd{k=0}{\Nz}\frac{\varepsilon}{2}\left(\deltwo\Uijk{i}{j}{k}\right)^{2}\Dx\Dy\Dz.\\
        \end{dcases}
    \]
    Then, the scheme in Definition~\ref{dfn:iwade_SH} can be rewritten as follows:
    \[
        \begin{split}
            &\frac{\Un{n+1}-\Un{n}}{\Dt}\\
            &\quad=\begin{dcases}
                -\left[\nablaLdtwo\phionefunc{\Un{n}}+\left\{\nablaLdtwo\phitwofunc{\Un{n+1}}+\nablaLdtwo\phifourfunc{\Un{n+1}}\right\}-\nablaLdtwo\phithreefunc{\Un{n}}\right]\\
                \quad\quad\quad\quad\quad\quad\quad\quad\quad\quad\quad\quad\quad\quad\quad\quad\quad\quad\quad\quad\quad\quad\quad\quad\quad\quad\quad\quad\quad\quad(\eta<1),\\
                -\left[\nablaLdtwo\phionefunc{\Un{n}}+\nablaLdtwo\phifourfunc{\Un{n+1}}-\left\{-\nablaLdtwo\phitwofunc{\Un{n}}+\nablaLdtwo\phithreefunc{\Un{n}}\right\}\right]\\
                \quad\quad\quad\quad\quad\quad\quad\quad\quad\quad\quad\quad\quad\quad\quad\quad\quad\quad\quad\quad\quad\quad\quad\quad\quad\quad\quad\quad\quad\quad(\eta\geq 1).
            \end{dcases}
        \end{split}
    \]
\end{cor}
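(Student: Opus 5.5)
The plan is to compute the four gradients $\nablaLdtwo\phi_{\ell}$ directly from the definition of the gradient, i.e.\ by evaluating the directional derivative $\lim_{\Lambdaa\to+0}\{\phi_{\ell}(\UU+\Lambdaa\hhhh)-\phi_{\ell}(\UU)\}/\Lambdaa$ and writing it as $\productLdtwo{\placeholder}{\hhhh}$. Substituting the resulting expressions into the scheme of Definition~\ref{dfn:iwade_SH} then gives the claim in both cases $\eta<1$ and $\eta\geq 1$. Throughout, I will work in the subspace of grid functions satisfying the discrete Neumann conditions of Definition~\ref{dfn:iwade_SH}, extended by even reflection across the boundary indices. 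Then the central differences $\deli,\delj,\delk$ vanish on the boundary for $\UU$, $\hhhh$ and $\deltwo\UU$, which is exactly the hypothesis of Lemma~\ref{lem:integration_by_parts}.

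The pointwise terms are immediate. For $\phione$ the directional derivative is $\sumd{i=0}{\Nx}\sumd{j=0}{\Ny}\sumd{k=0}{\Nz}\Uijk{i}{j}{k}^{3}\hhhh_{i,j,k}\Dx\Dy\Dz$, so $\nablaLdtwo\phionefunc{\UU}=\UU^{3}$. In the same way, $\nablaLdtwo\phitwofunc{\UU}=(1-\eta)\UU$, and hence $-\nablaLdtwo\phitwofunc{\UU}=(\eta-1)\UU$.

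For $\phithree$, the directional derivative is
\[
\sumd{i=0}{\Nx}\sumd{j=0}{\Ny}\sumd{k=0}{\Nz}\left(\nabladp\Uijk{i}{j}{k}\cdot\nabladp\hhhh_{i,j,k}+\nabladm\Uijk{i}{j}{k}\cdot\nabladm\hhhh_{i,j,k}\right)\Dx\Dy\Dz .
\]
By Lemma~\ref{lem:integration_by_parts} this equals $-2\productLdtwo{\deltwo\UU}{\hhhh}$, so $\nablaLdtwo\phithreefunc{\UU}=-2\deltwo\UU$. For $\phifour$, the directional derivative is $\varepsilon\productLdtwo{\deltwo\UU}{\deltwo\hhhh}$. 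I will apply Lemma~\ref{lem:integration_by_parts} twice. First, with $\fF=\deltwo\UU$ and $\gG=\hhhh$, to move $\deltwo$ off $\hhhh$ into the symmetric gradient form. Second, with $\fF=\hhhh$ and $\gG=\deltwo\UU$, to return to $\productLdtwo{\hhhh}{\deltwo(\deltwo\UU)}$. This gives $\nablaLdtwo\phifourfunc{\UU}=\varepsilon\delfour\UU$. The second-order boundary conditions $\delidelijkU{0}=0$, etc., are exactly what makes the hypothesis hold for $\fF=\deltwo\UU$.

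With these formulas, the $\eta<1$ case reads
\[
-\left[\UU^{(n)3}+(1-\eta)\Un{n+1}+\varepsilon\delfour\Un{n+1}+2\deltwo\Un{n}\right],
\]
and the $\eta\geq1$ case reads
\[
-\left[\UU^{(n)3}+\varepsilon\delfour\Un{n+1}-\left\{(\eta-1)\Un{n}-2\deltwo\Un{n}\right\}\right].
\]
Both coincide with Definition~\ref{dfn:iwade_SH}.

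The main obstacle I expect is bookkeeping at the boundary. The gradient is taken in $\Ldtwo$ with the trapezoidal weights of $\sumd{}{}$, and $\deltwo$ at boundary nodes uses ghost values. So I must fix the convention that ghost values are defined by even reflection, $\UU_{-1,j,k}=\UU_{1,j,k}$ and so on, and that $\deltwo\UU$ is reflected likewise. I also need to check that restricting the gradient to this constrained space is consistent, i.e.\ that the representers $\UU^{3}$, $\deltwo\UU$ and $\delfour\UU$ again satisfy the reflection symmetry. I would verify this symmetry once and then invoke the lemma.
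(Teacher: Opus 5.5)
Your proposal is correct and follows the paper's route: compute each directional derivative, read off $\UU^{3}$ and $(1-\eta)\UU$ directly, and use Lemma~\ref{lem:integration_by_parts} with the boundary conditions (once for $\phithree$, twice for $\phifour$) to get $-2\deltwo\UU$ and $\varepsilon\delfour\UU$, then substitute into the scheme. The boundary bookkeeping you flag is harmless, because the discrete Neumann conditions only determine the ghost values (by even reflection) and impose no constraint on elements of $\Ldtwo$. So no restriction to a proper subspace, and no separate consistency check on the representers, is actually needed.
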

\begin{proof}
    \[
        \begin{dcases}
            \lim_{\Lambdaa\to +0}\frac{\phionefunc{\UU+\Lambdaa\xii}-\phionefunc{\UU}}{\Lambdaa}=\productLdtwo{\UU^{3}}{\xii},\\
            \lim_{\Lambdaa\to +0}\frac{\phitwofunc{\UU+\Lambdaa\xii}-\phitwofunc{\UU}}{\Lambdaa}=\productLdtwo{(1-\eta)\UU}{\xii},\\
            \lim_{\Lambdaa\to +0}\frac{\phithreefunc{\UU+\Lambdaa\xii}-\phithreefunc{\UU}}{\Lambdaa}=\productLdtwo{-2\deltwo\UU}{\xii},\\
            \lim_{\Lambdaa\to +0}\frac{\phifourfunc{\UU+\Lambdaa\xii}-\phifourfunc{\UU}}{\Lambdaa}=\productLdtwo{\varepsilon\delfour\UU}{\xii},
        \end{dcases}
    \]
    which follow from Lemma~\ref{lem:integration_by_parts} and the boundary conditions. Hence,
    \[
        \begin{dcases}
            \nablaLdtwo\phionefunc{\UU}=\UU^{3},\\
            \nablaLdtwo\phitwofunc{\UU}=(1-\eta)\UU,\\
            \nablaLdtwo\phithreefunc{\UU}=-2\deltwo\UU,\\
            \nablaLdtwo\phifourfunc{\UU}=\varepsilon\delfour\UU.\\
        \end{dcases}
    \]
    This completes the proof.
\end{proof}
\subsection{Stability and Dissipation Properties}
We next establish stability and dissipation properties of the scheme in Definition~\ref{dfn:iwade_SH}. Our approach follows ideas presented in~\cite{Iwade_memo_2_senko_notSH_dissipative_1,Iwade_memo_1}. 
\par We first verify the smoothness and convexity of the terms in Definition~\ref{dfn:iwade_SH}. These properties are used to invoke Theorem~\ref{thm:iwade_unique} for unique solvability and to prepare for an application of Theorem~\ref{thm:iwade_general_mu} to derive an energy-dissipation law. Proving smoothness of $\phione$ directly is difficult; instead, in Lemma~\ref{lem:SH_L_convex_mu}, we introduce a modified functional $\phionetilde$ and establish its smoothness. We justify this replacement a posteriori via a uniform $\normLd{\infty}{\placeholder}$ bound.
\begin{lem}
    \label{lem:SH_L_convex_mu}
    Let $M\geq 0$, and define the function
    \[
        \psifunc{\Uijk{i}{j}{k}}\coloneqq\begin{dcases}
            \frac{3M^{2}}{2}\Uijk{i}{j}{k}^{2}+2M^{3}\Uijk{i}{j}{k}+\frac{3}{4}M^{4} & (\Uijk{i}{j}{k}<-M),\\
            \frac{1}{4}\Uijk{i}{j}{k}^{4} & (-M\leq\Uijk{i}{j}{k}\leq M),\\
            \frac{3M^{2}}{2}\Uijk{i}{j}{k}^{2}-2M^{3}\Uijk{i}{j}{k}+\frac{3}{4}M^{4} & (\Uijk{i}{j}{k}>M).
        \end{dcases}
    \]
    Using this, define
    \[
        \phionetildefunc{\UU}\coloneqq\sumd{i=0}{\Nx}\sumd{j=0}{\Ny}\sumd{k=0}{\Nz}\psifunc{\Uijk{i}{j}{k}}\Dx\Dy\Dz.
    \]
    Then, $\phionetilde$ is $3M^{2}$-smooth. Moreover, if $\eta<1$, then $\phitwo+\phifour$ is $(1-\eta)$-strongly convex and $\phithree$ is convex, and if $\eta\geq 1$, then $\phifour$ is convex and $-\phitwo+\phithree$ is $(\eta-1)$-strongly convex.
\end{lem}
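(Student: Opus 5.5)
The proof splits into two independent parts: the $3M^{2}$-smoothness of $\phionetilde$, and the (strong) convexity claims for the quadratic functionals.

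\textbf{Smoothness of $\phionetilde$.} We verify directly that $\psi$ is $C^{1}$ with
\[
\psi'(s)=\begin{dcases} 3M^{2}s+2M^{3} & (s<-M),\\ s^{3} & (-M\leq s\leq M),\\ 3M^{2}s-2M^{3} & (s>M).\end{dcases}
\]
The values and derivatives match at $s=\pm M$: for example, at $s=M$ both branches give $\psi(M)=M^{4}/4$ and $\psi'(M)=M^{3}$. Moreover $\psi'$ is piecewise $C^{1}$ with $0\leq\psi''(s)\leq 3M^{2}$, since $3s^{2}\leq 3M^{2}$ on $[-M,M]$ and $\psi''=3M^{2}$ outside. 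Hence $|\psi'(a)-\psi'(b)|\leq 3M^{2}|a-b|$ for all $a,b\in\mathbb{R}$, by the mean value theorem applied piecewise or by integrating $\psi''$. The directional derivative of $\phionetilde$ is then $\productLdtwo{\psi'(\UU)}{\xii}$, where $\psi'$ acts componentwise, so $\nablaLdtwo\phionetildefunc{\UU}=\psi'(\UU)$. The weights in $\sumd{}{}$ are identical on both sides, so squaring the componentwise bound and summing with these weights gives
\[
\normLdtwo{\psi'(\UU)-\psi'(\tilde{\UU})}\leq 3M^{2}\normLdtwo{\UU-\tilde{\UU}} .
\]
This is exactly $3M^{2}$-smoothness.

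\textbf{Convexity of the quadratic parts.} Each of $\phitwo,\phithree,\phifour$ is a quadratic form $\frac12\productLdtwo{A\UU}{\UU}$ with a self-adjoint $A$, namely $A=(1-\eta)I$, $-2\deltwo$, and $\varepsilon\delfour$ respectively. For such a form,
\[
\nu(\gG)-\nu(\fF)-\productLdtwo{A\fF}{\gG-\fF}=\tfrac12\productLdtwo{A(\gG-\fF)}{\gG-\fF}.
\]
So $\mu$-strong convexity reduces to $\productLdtwo{A h}{h}\geq\mu\normLdtwo{h}^{2}$. To get signs, we write the forms directly from their definitions rather than through $A$:
\begin{itemize}
\item $\phithree$ is a sum of squares of $\nabladp h$ and $\nabladm h$, so $\phithree(h)\geq 0$ and $\phithree$ is convex.
\item $\phifour(h)=\frac{\varepsilon}{2}\normLdtwo{\deltwo h}^{2}\geq 0$, so $\phifour$ is convex.
\item $\phitwo(h)=\frac{1-\eta}{2}\normLdtwo{h}^{2}$.
\end{itemize}
For $\eta<1$, $\phitwo+\phifour$ has quadratic part at least $\frac{1-\eta}{2}\normLdtwo{h}^{2}$, giving $(1-\eta)$-strong convexity. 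For $\eta\geq1$, $-\phitwo+\phithree$ has quadratic part $\frac{\eta-1}{2}\normLdtwo{h}^{2}+\phithree(h)\geq\frac{\eta-1}{2}\normLdtwo{h}^{2}$, giving $(\eta-1)$-strong convexity.

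We write the expansions with the defining sums rather than with $A$ to avoid needing self-adjointness. Since $\phithree$ and $\phifour$ are homogeneous quadratics, the polarization identity $q(\gG)-q(\fF)-Dq(\fF)[\gG-\fF]=q(\gG-\fF)$ holds for any quadratic form $q(h)=B(h,h)$ with symmetric bilinear $B$. The gradients were already identified in Corollary~\ref{cor:dfn_phi} via Lemma~\ref{lem:integration_by_parts} under the boundary conditions, so the directional derivative coincides with the inner product against the gradient.

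\textbf{Main obstacle.} The only delicate point is the boundary conditions. The functions $\UU$ in $\Ldtwo$ must be interpreted as satisfying the discrete Neumann conditions, that is, as living on the subspace where $\nablaLdtwo$ is well defined, or with the gradients interpreted as in Corollary~\ref{cor:dfn_phi}. Differences $\gG-\fF$ then stay in that subspace because the conditions are linear. Everything else is the elementary one-dimensional Lipschitz estimate for $\psi'$.
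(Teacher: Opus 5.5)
Your proposal is correct and is essentially the paper's proof. Smoothness comes from the same componentwise mean-value bound $|\psi''|\le 3M^{2}$, summed with the trapezoidal weights, and convexity comes from the same sum-of-squares expressions for the quadratic parts. The only variation is that you verify the first-order inequality directly via the polarization identity, which handles the combinations $\phitwo+\phifour$ and $-\phitwo+\phithree$ explicitly; the paper instead checks gradient monotonicity for $\phithree,\phifour$ (criterion 3 of Proposition~\ref{prop:convex_equivalent}) and leaves the combination with $\phitwo$ implicit. Also, the boundary conditions only fix the ghost values linearly, so $\phithree$ and $\phifour$ are quadratic forms on all of $\Ldtwo$ and no restriction to a subspace is needed.
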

\begin{proof}
    We first determine $\nablaLdtwo\phionetilde$. We have
    \[
        \lim_{\Lambdaa\to +0}\frac{\phionetildefunc{\UU+\Lambdaa\xii}-\phionetildefunc{\UU}}{\Lambdaa}=\sumd{i=0}{\Nx}\sumd{j=0}{\Ny}\sumd{k=0}{\Nz}\lim_{\Lambdaa\to +0}\frac{\psifunc{\Uijk{i}{j}{k}+\Lambdaa\xiijk{i}{j}{k}}-\psifunc{\Uijk{i}{j}{k}}}{\Lambdaa}\Dx\Dy\Dz.
    \]
    We rewrite this as
    \[
        \lim_{\Lambdaa\to +0}\frac{\psifunc{\Uijk{i}{j}{k}+\Lambdaa\xiijk{i}{j}{k}}-\psifunc{\Uijk{i}{j}{k}}}{\Lambdaa}=\psidfunc{\Uijk{i}{j}{k}}\xiijk{i}{j}{k},
    \]
    where
    \[
        \psidfunc{\Uijk{i}{j}{k}}=\begin{dcases}
            3M^{2}\Uijk{i}{j}{k}+2M^{3} & (\Uijk{i}{j}{k}<-M),\\
            \Uijk{i}{j}{k}^{3} & (-M\leq\Uijk{i}{j}{k}\leq M),\\
            3M^{2}\Uijk{i}{j}{k}-2M^{3} & (\Uijk{i}{j}{k}>M).
        \end{dcases}
    \]
    Therefore, since
    \[
        \lim_{\Lambdaa\to +0}\frac{\phionetildefunc{\UU+\Lambdaa\xii}-\phionetildefunc{\UU}}{\Lambdaa}=\sumd{i=0}{\Nx}\sumd{j=0}{\Ny}\sumd{k=0}{\Nz}\psidfunc{\Uijk{i}{j}{k}}\xiijk{i}{j}{k}\Dx\Dy\Dz=\productLdtwo{\psidfunc{\UU}}{\xii},
    \]
    where
    \[
        \left(\psidfunc{\UU}\right)_{i,j,k}\coloneqq\psidfunc{\Uijk{i}{j}{k}},
    \]
    it follows that
    \[
        \nablaLdtwo\phionetildefunc{\UU}=\psidfunc{\UU}.
    \]
    \par We now show that $\phionetilde$ is $3M^{2}$-smooth. We have
    \[
        \normLdtwo{\nablaLdtwo\phionetildefunc{\UU}-\nablaLdtwo\phionetildefunc{\Ubar}}^{2}=\sumd{i=0}{\Nx}\sumd{j=0}{\Ny}\sumd{k=0}{\Nz}\left(\psidfunc{\Uijk{i}{j}{k}}-\psidfunc{\Ubarijk{i}{j}{k}}\right)^{2}\Dx\Dy\Dz.
    \]
    Here,
    \[
        \left(\psidfunc{\Uijk{i}{j}{k}}-\psidfunc{\Ubarijk{i}{j}{k}}\right)^{2}\leq 9M^{4}\left(\Uijk{i}{j}{k}-\Ubarijk{i}{j}{k}\right)^{2}.
    \]
    In fact, if $\Uijk{i}{j}{k}=\Ubarijk{i}{j}{k}$, the claim is trivial; if $\Uijk{i}{j}{k}\neq\Ubarijk{i}{j}{k}$, it suffices to show
    \[
        \begin{split}
            &\left(\psidfunc{\Uijk{i}{j}{k}}-\psidfunc{\Ubarijk{i}{j}{k}}\right)^{2}\leq 9M^{4}\left(\Uijk{i}{j}{k}-\Ubarijk{i}{j}{k}\right)^{2}\\
            \iff&\left|\frac{\psidfunc{\Uijk{i}{j}{k}}-\psidfunc{\Ubarijk{i}{j}{k}}}{\Uijk{i}{j}{k}-\Ubarijk{i}{j}{k}}\right|\leq 3M^{2},
        \end{split}
    \]
    which follows from the mean value theorem and $\left|\psiddfunc{\Uijk{i}{j}{k}}\right|\leq 3M^{2}$, and this inequality can be deduced from
    \[
        \psiddfunc{\Uijk{i}{j}{k}}=\begin{dcases}
            3M^{2} & (\Uijk{i}{j}{k}<-M),\\
            3\Uijk{i}{j}{k}^{2} & (-M\leq\Uijk{i}{j}{k}\leq M),\\
            3M^{2} & (\Uijk{i}{j}{k}>M).
        \end{dcases}
    \]
    Hence,
    \[
        \begin{split}
            \normLdtwo{\nablaLdtwo\phionetildefunc{\UU}-\nablaLdtwo\phionetildefunc{\Ubar}}^{2}&\leq\sumd{i=0}{\Nx}\sumd{j=0}{\Ny}\sumd{k=0}{\Nz}9M^{4}\left(\Uijk{i}{j}{k}-\Ubarijk{i}{j}{k}\right)^{2}\Dx\Dy\Dz\\
            &=\left(3M^{2}\normLdtwo{\UU-\Ubar}\right)^{2}.
        \end{split}
    \]
    This concludes the proof of the $3M^{2}$-smoothness of $\phionetilde$.
    \par Moreover, the convexity of $\phithree$ follows from
    \[
        \begin{split}
            &\quad\productLdtwo{\nablaLdtwo\phithreefunc{\UU}-\nablaLdtwo\phithreefunc{\Ubar}}{\UU-\Ubar}\\
            &=\sumd{i=0}{\Nx}\sumd{j=0}{\Ny}\sumd{k=0}{\Nz}\left(\left|\nabladpfunc{\Uijk{i}{j}{k}-\Ubarijk{i}{j}{k}}\right|^{2}+\left|\nabladmfunc{\Uijk{i}{j}{k}-\Ubarijk{i}{j}{k}}\right|^{2}\right)\Dx\Dy\Dz\\
            &\geq 0,
        \end{split}
    \]
    and the convexity of $\phifour$ can be proved from
    \[
        \productLdtwo{\nablaLdtwo\phifourfunc{\UU}-\nablaLdtwo\phifourfunc{\Ubar}}{\UU-\Ubar}=\varepsilon\normLdtwo{\deltwo\UU-\deltwo\Ubar}^{2}\geq 0,
    \]
    where we used the boundary conditions and Lemma~\ref{lem:integration_by_parts}. Moreover, since
    \[
        \begin{split}
            &\quad\phitwofunc{\UU}-\phitwofunc{\Ubar}-\productLdtwo{\nablaLdtwo\phitwofunc{\Ubar}}{\UU-\Ubar}-\frac{1-\eta}{2}\normLdtwo{\UU-\Ubar}^{2}\\
            &=\sumd{i=0}{\Nx}\sumd{j=0}{\Ny}\sumd{k=0}{\Nz}\frac{1-\eta}{2}\left(\Uijk{i}{j}{k}^{2}-\Ubarijk{i}{j}{k}^{2}-2\Ubarijk{i}{j}{k}\left(\Uijk{i}{j}{k}-\Ubarijk{i}{j}{k}\right)\right.\\
            &\quad\quad\quad\quad\quad\quad\quad\quad\quad\quad\quad\quad\quad\quad\quad\quad\quad\quad\quad\quad\left.-\left(\Uijk{i}{j}{k}-\Ubarijk{i}{j}{k}\right)^{2}\right)\Dx\Dy\Dz\\
            &=0,
        \end{split}
    \]
    if $\eta<1$, then $\phitwo$ is $(1-\eta)$-strongly convex, and if $\eta\geq 1$, then $-\phitwo$ is $(\eta-1)$-strongly convex.
    \par The above discussion yields the desired conclusion.
\end{proof}
In Lemma~\ref{lem:SH_L_convex_mu}, we established the smoothness of $\phionetilde$ instead of $\phione$. Therefore, Theorem~\ref{thm:iwade_general_mu} cannot be applied verbatim to obtain the energy-dissipation law. Nevertheless, unique solvability of the proposed scheme follows as below.
\begin{thm}
    For any $\Dt >0$, the scheme in Definition~\ref{dfn:iwade_SH} admits a unique solution at each time step.
\end{thm}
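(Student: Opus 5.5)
The plan is to reduce the statement to Theorem~\ref{thm:iwade_unique}, using Remark~\ref{rem:iwade_unique}: unique solvability needs no smoothness of the explicit term. The scheme is linearly implicit, and $\Un{n}$ enters only through the known right-hand side. By Corollary~\ref{cor:dfn_phi}, the scheme in Definition~\ref{dfn:iwade_SH} has exactly the form of Definition~\ref{dfn:iwade}, with the following choices.

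For $\eta<1$ we take
\[
\nuone\coloneqq\phione,\qquad \nutwo\coloneqq\phitwo+\phifour,\qquad \nuthree\coloneqq\phithree .
\]
For $\eta\geq 1$ we take
\[
\nuone\coloneqq\phione,\qquad \nutwo\coloneqq\phifour,\qquad \nuthree\coloneqq-\phitwo+\phithree .
\]
Lemma~\ref{lem:SH_L_convex_mu} gives the needed convexity in each case.
\begin{itemize}
\item For $\eta<1$: $\nutwo$ is $(1-\eta)$-strongly convex, so $\mu_{2}=1-\eta>0$, and $\nuthree$ is convex, so $\mu_{3}=0$.
\item For $\eta\geq1$: $\nutwo=\phifour$ is convex, so $\mu_{2}=0$, and $\nuthree$ is $(\eta-1)$-strongly convex.
\end{itemize}
In both cases $\mu_{2}\geq0$, which is the only condition the proof of Theorem~\ref{thm:iwade_unique} uses on $\nutwo$.

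The one point needing care is that Definition~\ref{dfn:iwade} formally asks $\nuone$ to be $\LL$-smooth, and $\phione$ is not globally Lipschitz-gradient. I will therefore repeat the argument of Theorem~\ref{thm:iwade_unique} directly rather than cite it. Define
\[
\nufournfunc{n}{\UU}\coloneqq\nutwofunc{\UU}-\productLdtwo{\UU}{-\nablaLdtwo\phionefunc{\Un{n}}+\nablaLdtwo\nuthreefunc{\Un{n}}}+\frac{1}{2\Dt}\normLdtwo{\UU-\Un{n}}^{2}.
\]
The middle term is linear in $\UU$, since $\nablaLdtwo\phionefunc{\Un{n}}=(\Un{n})^{3}$ is a fixed element of $\Ldtwo$, and it exists by Corollary~\ref{cor:dfn_phi}. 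Hence $\nufourn{n}$ is $(\mu_{2}+1/\Dt)$-strongly convex. To see this, add the strong-convexity inequality for $\nutwo$ to the exact identity
\[
\tfrac{1}{2\Dt}\normLdtwo{\UU-\Un{n}}^{2}=\tfrac{1}{2\Dt}\normLdtwo{\Ubar-\Un{n}}^{2}+\productLdtwo{\tfrac{\Ubar-\Un{n}}{\Dt}}{\UU-\Ubar}+\tfrac{1}{2\Dt}\normLdtwo{\UU-\Ubar}^{2},
\]
noting that the linear term contributes no curvature.

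Proposition~\ref{prop:mu_min} then gives a unique zero of $\nablaLdtwo\nufourn{n}$ for every $\Dt>0$. As in Theorem~\ref{thm:iwade_unique}, this zero condition is equivalent to the scheme equation, so $\Un{n+1}$ exists and is unique.

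The main obstacle is only bookkeeping. First, the gradients must be computed under the discrete boundary conditions, which is supplied by Corollary~\ref{cor:dfn_phi} via Lemma~\ref{lem:integration_by_parts}. Second, one must remark that $\phione$'s lack of smoothness is harmless here, because $\phione$ is evaluated only at the known $\Un{n}$. Induction on $n$ then gives unique solvability at every step.
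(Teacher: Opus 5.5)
Your proof is correct and follows the paper's own route. The paper proves the statement by citing Theorem~\ref{thm:iwade_unique}, Remark~\ref{rem:iwade_unique} (no smoothness of $\nu_1$ is needed) and the convexity statements of Lemma~\ref{lem:SH_L_convex_mu}, under the same splitting from Corollary~\ref{cor:dfn_phi} that you use; the only difference is that you re-derive the $(\mu_2+1/\Delta t)$-strong convexity of $\rho^{(n)}$ rather than appeal to the remark, which makes explicit why evaluating $\phi_1$ only at the known $U^{(n)}$ is harmless.
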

\begin{proof}
    It follows from Theorem~\ref{thm:iwade_unique} together with Remark~\ref{rem:iwade_unique} and Lemma~\ref{lem:SH_L_convex_mu}.
\end{proof}
For the energy-dissipation law, note that $\phionetildefunc{\Un{n}}$ coincides with $\phionefunc{\Un{n}}$ whenever $\normLd{\infty}{\Un{n}}\leq M$, where $M$ is the parameter in Lemma~\ref{lem:SH_L_convex_mu}. Hence, it suffices to establish an a priori bound on $\normLd{\infty}{\Un{n}}$.
\begin{lem}
    \label{lem:Energy_lower_bound}
    Let
    \[
        \begin{dcases}
            \Hfunc{\UU}\coloneqq\phionefunc{\UU}+\phitwofunc{\UU}-\phithreefunc{\UU}+\phifourfunc{\UU},\\
            \Htilfunc{\UU}\coloneqq\phionetildefunc{\UU}+\phitwofunc{\UU}-\phithreefunc{\UU}+\phifourfunc{\UU}.
        \end{dcases}
    \]
    Then, for arbitrary $\zetaa>\frac{1}{\varepsilon}+\eta-1$,
    \[
        \Hfunc{\UU}\geq\frac{\Ceez}{\CC^{2}}\normLd{\infty}{\UU}^{2}-\frac{\zetaa^{2}}{4}\Lx\Ly\Lz.
    \]
    Here, $\CC$ is a constant which appears in Lemma~\ref{lem:Sobolev_discrete_Laplacian}, and $\Ceez$ is a positive constant
    \[
        \Ceez=\frac{\varepsilon+1-\eta+\zetaa-\sqrt{(\varepsilon-1+\eta-\zetaa)^{2}+4}}{6}.
    \]
    Moreover, if $\zetaa\leq M^{2}$, then for the same constants $\CC,\Ceez$, it follows that
    \[
        \Htilfunc{\UU}\geq\frac{\Ceez}{\CC^{2}}\normLd{\infty}{\UU}^{2}-\frac{\zetaa^{2}}{4}\Lx\Ly\Lz.
    \]
\end{lem}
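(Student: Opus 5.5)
The plan is to prove a pointwise lower bound for the quartic term and then control the remaining quadratic part by a two-variable quadratic-form estimate. The final step converts $\normLdtwo{\UU}$ and $\normLdtwo{\deltwo\UU}$ into $\normLd{\infty}{\UU}$ via Lemma~\ref{lem:Sobolev_discrete_Laplacian}. I expect the required form of that lemma (which appears later in the paper, and which I have not seen) to be
\[
\normLd{\infty}{\UU}^{2}\leq\CC^{2}\left(\normLdtwo{\UU}^{2}+\normLdtwo{\DD\UU}^{2}+\normLdtwo{\deltwo\UU}^{2}\right).
\]
I choose this form because it reproduces exactly the factor $6$ in $\Ceez$. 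If the lemma is stated differently, only the final constant in the last step changes.

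\textbf{Step 1 (quartic term).} From $(s^{2}-\zetaa)^{2}\geq 0$ we get $\frac14 s^{4}\geq\frac{\zetaa}{2}s^{2}-\frac{\zetaa^{2}}{4}$ for every real $s$. Summing with the trapezoidal weights, whose total mass is $\Lx\Ly\Lz$, gives
\[
\phionefunc{\UU}\geq\frac{\zetaa}{2}\normLdtwo{\UU}^{2}-\frac{\zetaa^{2}}{4}\Lx\Ly\Lz.
\]
For $\phionetilde$ the same pointwise inequality $\psifunc{s}\geq\frac{\zetaa}{2}s^{2}-\frac{\zetaa^{2}}{4}$ is needed whenever $\zetaa\leq M^{2}$. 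On $|s|\leq M$ it is the same estimate. For $s>M$, set $g(s)=\psifunc{s}-\frac{\zetaa}{2}s^{2}+\frac{\zetaa^{2}}{4}$. Then $g(M)=\frac{(M^{2}-\zetaa)^{2}}{4}\geq0$ and $g'(s)=(3M^{2}-\zetaa)s-2M^{3}\geq M(M^{2}-\zetaa)\geq 0$ for $s\geq M$, so $g\geq0$ there. The case $s<-M$ follows by symmetry, since $\psi$ is even.

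\textbf{Step 2 (quadratic form).} By Lemma~\ref{lem:integration_by_parts} and the boundary conditions, $\phithreefunc{\UU}=\normLdtwo{\DD\UU}^{2}=-\productLdtwo{\UU}{\deltwo\UU}\leq ab$, where $a=\normLdtwo{\UU}$ and $b=\normLdtwo{\deltwo\UU}$. Hence
\[
\Hfunc{\UU}+\frac{\zetaa^{2}}{4}\Lx\Ly\Lz\geq\frac{1-\eta+\zetaa}{2}a^{2}-ab+\frac{\varepsilon}{2}b^{2}.
\]
The symmetric matrix of this form has entries $\frac{1-\eta+\zetaa}{2}$, $-\frac12$, $\frac{\varepsilon}{2}$. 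Its smallest eigenvalue is
\[
\lambda=\frac{\varepsilon+1-\eta+\zetaa-\sqrt{(\varepsilon-1+\eta-\zetaa)^{2}+4}}{4}.
\]
This is positive exactly when $\varepsilon(1-\eta+\zetaa)>1$, that is, when $\zetaa>\frac1\varepsilon+\eta-1$. So the form is at least $\lambda(a^{2}+b^{2})$, and $\lambda=\frac32\Ceez$.

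\textbf{Step 3 (Sobolev).} Since $\normLdtwo{\DD\UU}^{2}\leq ab\leq\frac12(a^{2}+b^{2})$, we have $a^{2}+\normLdtwo{\DD\UU}^{2}+b^{2}\leq\frac32(a^{2}+b^{2})$. The assumed Sobolev-type lemma then gives $\normLd{\infty}{\UU}^{2}\leq\frac32\CC^{2}(a^{2}+b^{2})$. Combining with Step 2,
\[
\lambda(a^{2}+b^{2})\geq\frac{2\lambda}{3\CC^{2}}\normLd{\infty}{\UU}^{2}=\frac{\Ceez}{\CC^{2}}\normLd{\infty}{\UU}^{2},
\]
which is the claim for $\Hfunc{\UU}$. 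The bound for $\Htilfunc{\UU}$ follows identically, using Step 1's estimate for $\psi$.

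The main obstacle is matching the exact form of Lemma~\ref{lem:Sobolev_discrete_Laplacian} to the constant $6$. Steps 1 and 2 are robust regardless of that form.
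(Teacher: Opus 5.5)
Your proof is correct and follows the paper's argument. You use the same pointwise bound $\frac14 s^4\geq\frac{\zeta}{2}s^2-\frac{\zeta^2}{4}$ and its analogue for $\psi$ (monotonicity of $g$ in place of the paper's vertex location), the same summation-by-parts control of $\phi_3(U)=\normLdtwo{\DD\UU}^2$, and the same final step via $\normLdtwo{\DD\UU}^2\leq\frac12(a^2+b^2)$ and Lemma~\ref{lem:Sobolev_discrete_Laplacian}, whose form you guessed exactly; your minimum-eigenvalue computation is just the paper's Young inequality $\phi_3(U)\leq\frac{1}{2\omega}a^2+\frac{\omega}{2}b^2$ with $\omega$ chosen so that both coefficients equal $\frac32\Ceez$.
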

In the proof of Lemma~\ref{lem:Energy_lower_bound}, we make use of the following lemma, the proof of which is given in the Appendix. 
\begin{lem}
    \label{lem:Sobolev_discrete_Laplacian}
    There exists a constant $\CC$ such that
    \[
        \normLd{\infty}{\UU}\leq\CC\sqrt{\normLdtwo{\UU}^{2}+\normLdtwo{\DD\UU}^{2}+\normLdtwo{\deltwo\UU}^{2}}
    \]
    for all three-dimensional grid functions $\Uijk{i}{j}{k}$ subject to
    \[
        \begin{dcases}
            \deliUU{0}=\deliUU{\Nx}=0,\\
            \deljUU{0}=\deljUU{\Ny}=0,\\
            \delkUU{0}=\delkUU{\Nz}=0,\\
            \delidelijkUU{0}=\delidelijkUU{\Nx}=0,\\
            \deljdelijkUU{0}=\deljdelijkUU{\Ny}=0,\\
            \delkdelijkUU{0}=\delkdelijkUU{\Nz}=0.\\
        \end{dcases}
    \]
\end{lem}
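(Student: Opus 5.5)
The plan is to prove the inequality by diagonalizing $\deltwo$ with the discrete cosine transform and then applying Cauchy--Schwarz in frequency space. The decisive point is that the resulting lattice sum has the form $\sum_{\bm{k}\in\mathbb{Z}_{\ge0}^{3}}(1+|\bm{k}|^{2})^{-2}$, which converges in three dimensions. Hence the constant $\CC$ depends only on $\Lx,\Ly,\Lz$ and not on $\Nx,\Ny,\Nz$.

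\textbf{Step 1: reflection and diagonalization.} First, interpret the boundary conditions. The condition $\deliUU{0}=\deliUU{\Nx}=0$ says that the ghost values satisfy $\Uijk{-1}{j}{k}=\Uijk{1}{j}{k}$ and $\Uijk{\Nx+1}{j}{k}=\Uijk{\Nx-1}{j}{k}$, and similarly in $y$ and $z$. So $\UU$ extends evenly across each face. Every quantity appearing on the right-hand side, namely $\nabladp\UU$, $\nabladm\UU$ and $\deltwo\UU$ at boundary nodes, is then determined by this even extension. The conditions on $\deltwo\UU$ are consistent with it but are not needed for the inequality.

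Next, consider the DCT-I basis
\[
\varphi_{p,q,r}(i,j,k)=\cos\frac{p\pi i}{\Nx}\cos\frac{q\pi j}{\Ny}\cos\frac{r\pi k}{\Nz},\qquad 0\le p\le\Nx,\ 0\le q\le\Ny,\ 0\le r\le\Nz .
\]
These functions are even across the faces. A direct computation gives
\[
-\deltwo\varphi_{p,q,r}=\lambda_{p,q,r}\,\varphi_{p,q,r},\qquad
\lambda_{p,q,r}=\frac{4}{\Dx^{2}}\sin^{2}\frac{p\pi}{2\Nx}+\frac{4}{\Dy^{2}}\sin^{2}\frac{q\pi}{2\Ny}+\frac{4}{\Dz^{2}}\sin^{2}\frac{r\pi}{2\Nz},
\]
and this holds at boundary nodes as well, thanks to the reflection. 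The basis is orthogonal for $\productLdtwo{\cdot}{\cdot}$, because the trapezoidal weights in $\sumd{}{}$ are exactly those that make DCT-I orthogonal. Its squared norms are $w_{p,q,r}=|\Omega|/8$ times a factor $2$ for each index that equals $0$ or $\Nx$ (respectively $\Ny$, $\Nz$). So $|\Omega|/8\le w_{p,q,r}\le|\Omega|$ uniformly in the mesh.

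\textbf{Step 2: Parseval identities.} Write $\UU=\sum a_{p,q,r}\varphi_{p,q,r}$. Then $\normLdtwo{\UU}^{2}=\sum w|a|^{2}$ and $\normLdtwo{\deltwo\UU}^{2}=\sum w\lambda^{2}|a|^{2}$. By Lemma~\ref{lem:integration_by_parts}, whose hypotheses are exactly the first three boundary conditions, we also get $\normLdtwo{\DD\UU}^{2}=-\productLdtwo{\UU}{\deltwo\UU}=\sum w\lambda|a|^{2}$. Consequently
\[
\sum w(1+\lambda)^{2}|a|^{2}\le 2\left(\normLdtwo{\UU}^{2}+\normLdtwo{\DD\UU}^{2}+\normLdtwo{\deltwo\UU}^{2}\right).
\]

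\textbf{Step 3: Cauchy--Schwarz.} Since $|\varphi_{p,q,r}|\le1$, we have
\[
\normLd{\infty}{\UU}\le\sum|a|\le\Bigl(\sum w(1+\lambda)^{2}|a|^{2}\Bigr)^{1/2}\Bigl(\sum w^{-1}(1+\lambda)^{-2}\Bigr)^{1/2}.
\]
To bound the second factor uniformly in the mesh, use $\sin x\ge 2x/\pi$ on $[0,\pi/2]$ together with $\Lx=\Nx\Dx$. This gives
\[
\frac{4}{\Dx^{2}}\sin^{2}\frac{p\pi}{2\Nx}\ge\frac{4p^{2}}{\Lx^{2}},
\]
and hence $\lambda_{p,q,r}\ge c_{0}(p^{2}+q^{2}+r^{2})$ with $c_{0}=4\min(\Lx,\Ly,\Lz)^{-2}$. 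Combined with $w\ge|\Omega|/8$, the second factor is at most
\[
\frac{8}{|\Omega|}\sum_{\bm{k}\in\mathbb{Z}_{\ge0}^{3}}\bigl(1+c_{0}|\bm{k}|^{2}\bigr)^{-2}<\infty,
\]
by comparison with $\int_{\mathbb{R}^{3}}(1+c_{0}|x|^{2})^{-2}\,dx$. This yields $\CC$.

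\textbf{Main obstacle.} The step requiring the most care is Step 1. One has to verify that the discrete orthogonality relations for DCT-I with the half weights of $\sumd{}{}$ hold, and that the eigenrelation for $\deltwo$ is valid at the boundary nodes under the ghost-point convention implied by the $\delta^{\langle1\rangle}$ conditions. I would check both in one dimension, using the even extension to a $2\Nx$-periodic sequence so that the cosine sums become full discrete Fourier sums, and then pass to three dimensions by tensor products.
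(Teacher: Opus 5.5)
Your argument is correct, and it takes a genuinely different route from the paper.

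\textbf{The paper's route.} The paper splits the proof into two steps.
\begin{enumerate}
\item It asserts a discrete Sobolev inequality
\[
\normLd{\infty}{\UU}\le C''\bigl(\normLdtwo{\UU}^2+\normLdtwo{\DD\UU}^2+\normLdtwo{\DD^2\UU}^2\bigr)^{1/2},
\]
which involves all the mixed second differences. It justifies this only by citing Brezis together with ``a standard discretization argument.''
\item It then proves discrete elliptic regularity, $\normLdtwo{\DD^2\UU}\le\normLdtwo{\deltwo\UU}$, by the summation-by-parts computations of Lemmas~\ref{lem:norm_of_like_Laplacian} and~\ref{lem:norm_of_Laplacian}. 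These use all six boundary conditions, including the second-layer ghost relation $\UU_{\Nx+2,j,k}=\UU_{\Nx-2,j,k}$ that comes from the conditions on $\deltwo\UU$.
\end{enumerate}

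\textbf{Your route.} You diagonalize $\deltwo$ exactly with DCT-I. Parseval, Cauchy--Schwarz, and the summability of $(1+|\bm k|^2)^{-2}$ over $\mathbb{Z}^3$ then give the $L^\infty$ bound in one step.
\begin{itemize}
\item You need only the first three boundary conditions. You correctly observe that the remaining three constrain only ghost values that never enter $\normLdtwo{\UU}$, $\normLdtwo{\DD\UU}$ or $\normLdtwo{\deltwo\UU}$.
\item You obtain an explicit constant depending only on $\Lx,\Ly,\Lz$. This mesh-independence is exactly what makes the time-step restriction in Theorem~\ref{thm:scheme_sup_bound} mesh-independent. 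Your proof establishes it rigorously, whereas the paper defers it to a citation.
\end{itemize}

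\textbf{What the paper's route buys.} It mirrors the continuous proof (elliptic regularity plus Sobolev embedding) and controls every mixed second difference. It would also carry over to operators or grids without an explicit eigenbasis. Your argument, by contrast, is tied to uniform tensor grids with Neumann or periodic conditions.

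\textbf{One small slip.} From $\frac{4}{\Dx^2}\sin^2\frac{p\pi}{2\Nx}\ge\frac{4p^2}{\Lx^2}$ you get $\lambda_{p,q,r}\ge c_0(p^2+q^2+r^2)$ with $c_0=4\max(\Lx,\Ly,\Lz)^{-2}$, not $4\min(\Lx,\Ly,\Lz)^{-2}$. Since any positive $c_0$ suffices, nothing else changes.
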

\begin{proof}[The proof of Lemma~\ref{lem:Energy_lower_bound}]
    First, we have $\frac{1}{4}\Uijk{i}{j}{k}^{4}\geq\frac{\zetaa}{2}\Uijk{i}{j}{k}^{2}-\frac{\zetaa^{2}}{4}$, and if $\zetaa\leq M^{2}$, then $\psifunc{\Uijk{i}{j}{k}}\geq\frac{\zetaa}{2}\Uijk{i}{j}{k}^{2}-\frac{\zetaa^{2}}{4}$. Indeed,
    \[
        \frac{1}{4}\Uijk{i}{j}{k}^{4}-\left(\frac{\zetaa}{2}\Uijk{i}{j}{k}^{2}-\frac{\zetaa^{2}}{4}\right)=\left(\frac{1}{2}\Uijk{i}{j}{k}^{2}-\frac{\zetaa}{2}\right)^{2}\geq 0,
    \]
    and
    \[
        \begin{split}
            &\quad\psifunc{\Uijk{i}{j}{k}}-\left(\frac{\zetaa}{2}\Uijk{i}{j}{k}^{2}-\frac{\zetaa^{2}}{4}\right)\\
            &=\begin{dcases}
                \frac{3M^{2}-\zetaa}{2}\Uijk{i}{j}{k}^{2}+2M^{3}\Uijk{i}{j}{k}+\frac{3M^{4}+\zetaa^{2}}{4} & (\Uijk{i}{j}{k}<-M)\\
                \left(\frac{1}{2}\Uijk{i}{j}{k}^{2}-\frac{\zetaa}{2}\right)^{2} & (-M\leq\Uijk{i}{j}{k}\leq M)\\
                \frac{3M^{2}-\zetaa}{2}\Uijk{i}{j}{k}^{2}-2M^{3}\Uijk{i}{j}{k}+\frac{3M^{4}+\zetaa^{2}}{4} & (\Uijk{i}{j}{k}>M)
            \end{dcases}\\
            &\geq\begin{dcases}
                \frac{3M^{2}-\zetaa}{2}(-M)^{2}+2M^{3}(-M)+\frac{3M^{4}+\zetaa^{2}}{4} & (\Uijk{i}{j}{k}<-M)\\
                0 & (-M\leq\Uijk{i}{j}{k}\leq M)\\
                \frac{3M^{2}-\zetaa}{2}M^{2}-2M^{3}\cdot M+\frac{3M^{4}+\zetaa^{2}}{4} & (\Uijk{i}{j}{k}>M)
            \end{dcases}\\
            &\geq 0,
        \end{split}
    \]
    where we used $\frac{2M^{3}}{3M^{2}-\zetaa}\leq\frac{2M^{3}}{3M^{2}-M^{2}}=M$. Hence, we have $\phionefunc{\UU}\geq\frac{\zetaa}{2}\normLdtwo{\UU}^{2}-\frac{\zetaa^{2}}{4}\Lx\Ly\Lz$, and if $\zetaa\leq M^{2}$, then $\phionetildefunc{\UU}\geq\frac{\zetaa}{2}\normLdtwo{\UU}^{2}-\frac{\zetaa^{2}}{4}\Lx\Ly\Lz$.
    \par Moreover, by Lemma~\ref{lem:integration_by_parts} and the boundary conditions,
    \[
        \begin{split}
            \phithreefunc{\UU}&=-\sumd{i=0}{\Nx}\sumd{j=0}{\Ny}\sumd{k=0}{\Nz}\Uijk{i}{j}{k}\deltwo\Uijk{i}{j}{k}\Dx\Dy\Dz\\
            &\leq\sumd{i=0}{\Nx}\sumd{j=0}{\Ny}\sumd{k=0}{\Nz}\left(\frac{1}{2\omegaa}\Uijk{i}{j}{k}^{2}+\frac{\omegaa}{2}\left(\deltwo\Uijk{i}{j}{k}\right)^{2}\right)\Dx\Dy\Dz\\
            &=\frac{1}{2\omegaa}\normLdtwo{\UU}^{2}+\frac{\omegaa}{2}\normLdtwo{\deltwo\UU}^{2}
        \end{split}
    \]
    holds, where $\omegaa$ denotes an arbitrary positive constant.
    \par From the discussion above, it follows that
    \[
        \Hfunc{\UU}\geq\frac{1}{2}\left(\zetaa+1-\eta-\frac{1}{\omegaa}\right)\normLdtwo{\UU}^{2}+\frac{\varepsilon-\omegaa}{2}\normLdtwo{\deltwo\UU}^{2}-\frac{\zetaa^{2}}{4}\Lx\Ly\Lz.
    \]
    Here, let
    \[
        \omegaa=\frac{\sqrt{(\varepsilon-1+\eta-\zetaa)^{2}+4}+\varepsilon-1+\eta-\zetaa}{2}.
    \]
    Then, from
    \[
        \frac{1}{2}\left(\zetaa+1-\eta-\frac{1}{\omegaa}\right)=\frac{\varepsilon-\omegaa}{2}=\frac{3}{2}\Ceez,
    \]
    we obtain
    \[
        \Hfunc{\UU}\geq\frac{3}{2}\Ceez\left(\normLdtwo{\UU}^{2}+\normLdtwo{\deltwo\UU}^{2}\right)-\frac{\zetaa^{2}}{4}\Lx\Ly\Lz.
    \]
    Here, $\Ceez>0$. Indeed, since $\zetaa>\frac{1}{\varepsilon}+\eta-1$, it follows that $1-\eta+\zetaa>\frac{1}{\varepsilon}>0$ and $\varepsilon+1-\eta+\zetaa>0$. Hence,
    \[
        \begin{split}
            \Ceez>0\iff&\varepsilon+1-\eta+\zetaa>\sqrt{(\varepsilon-1+\eta-\zetaa)^{2}+4}\\
            \iff&1-\eta+\zetaa>\frac{1}{\varepsilon}
        \end{split}
    \]
    holds, and it implies that $\Ceez>0$. Using this fact and
    \[
        \normLdtwo{\UU}^{2}+\normLdtwo{\deltwo\UU}^{2}\geq\frac{2}{3}\left(\normLdtwo{\UU}^{2}+\phithreefunc{\UU}+\normLdtwo{\deltwo\UU}^{2}\right),
    \]
    which follows from the previously established $\phithreefunc{\UU}\leq\frac{1}{2\omegaa}\normLdtwo{\UU}^{2}+\frac{\omegaa}{2}\normLdtwo{\deltwo\UU}^{2}$ and the assumption $\omegaa =1$, we obtain
    \[
        \Hfunc{\UU}\geq\Ceez\left(\normLdtwo{\UU}^{2}+\phithreefunc{\UU}+\normLdtwo{\deltwo\UU}^{2}\right)-\frac{\zetaa^{2}}{4}\Lx\Ly\Lz.
    \]
    Therefore, by using Lemma~\ref{lem:Sobolev_discrete_Laplacian}, it follows that
    \[
        \Hfunc{\UU}\geq\frac{\Ceez}{\CC^{2}}\normLd{\infty}{\UU}^{2}-\frac{\zetaa^{2}}{4}\Lx\Ly\Lz.
    \]
    \par The above argument also holds when $\Hfunc{\UU}$ is replaced by $\Htilfunc{\UU}$, assuming $\zetaa\leq M^{2}$.
\end{proof}
From the discussion above, we now prove the energy-dissipation law by proving the boundedness of solutions.
\begin{thm}
    \label{thm:scheme_sup_bound}
    In the scheme of Definition~\ref{dfn:iwade_SH}, for arbitrary $\zetaa>\frac{1}{\varepsilon}+\eta-1$, define the constant $\Ceez$\footnote{It is also worth noting that $\Ceez$ is monotonically increasing with respect to $\zetaa$, and if $\zetaa$ tends to $\infty$, $\Ceez$ approaches $\frac{\varepsilon}{3}$, while if $\zetaa$ tends to 0, $\Ceez$ approaches 0.} which appeared in Lemma~\ref{lem:Energy_lower_bound}, and let
    \[
        M=\max\mleft\{\sqrt{\max\mleft\{0,\zetaa\mright\}},\sqrt{\frac{\CC^{2}}{\Ceez}\left(\Hfunc{\Un{0}}+\frac{\zetaa^{2}}{4}\Lx\Ly\Lz\right)}\mright\}.
    \]
    Then, if $3M^{2}\leq\left|1-\eta\right|$ or $\begin{dcases}
        3M^{2}>\left|1-\eta\right|\\
        \Dt\leq\frac{2}{3M^{2}-\left|1-\eta\right|}
    \end{dcases}$
    holds, $\Hfunc{\Un{n}}$ is non-increasing in $n$, and for any $n$ we have
    \[
        \normLd{\infty}{\Un{n}}\leq\sqrt{\frac{\CC^{2}}{\Ceez}\left(\Hfunc{\Un{0}}+\frac{\zetaa^{2}}{4}\Lx\Ly\Lz\right)}.
    \]
\end{thm}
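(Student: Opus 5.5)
The proof goes by induction on $n$, running the ``tilde'' scheme and the true scheme side by side. Fix $\zetaa$ and $M$ as in the statement, and set
\[
B\coloneqq\sqrt{\frac{\CC^{2}}{\Ceez}\left(\Hfunc{\Un{0}}+\frac{\zetaa^{2}}{4}\Lx\Ly\Lz\right)}\leq M.
\]
Since $M\geq\sqrt{\max\{0,\zetaa\}}$, we have $\zetaa\leq M^{2}$. Hence the second part of Lemma~\ref{lem:Energy_lower_bound} applies to $\tilde{H}$ as well as to $H$.

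\textbf{Induction hypothesis.} For the given $n$:
\begin{enumerate}
\item $\normLd{\infty}{\Un{m}}\leq B$ for all $m\leq n$;
\item $\Hfunc{\Un{m}}\leq\Hfunc{\Un{m-1}}$ for $1\leq m\leq n$.
\end{enumerate}

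\textbf{Base case.} The bound $\normLd{\infty}{\Un{0}}\leq B$ follows directly from Lemma~\ref{lem:Energy_lower_bound}: since $\frac{\Ceez}{\CC^{2}}\normLd{\infty}{\Un{0}}^{2}-\frac{\zetaa^{2}}{4}\Lx\Ly\Lz\leq\Hfunc{\Un{0}}$, rearranging gives the claim.

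\textbf{Step 1: the true step equals a tilde step.} Suppose the hypothesis holds up to $n$. Since $\normLd{\infty}{\Un{n}}\leq B\leq M$, we have $\psidfunc{\Un{n}}=\Un{n}^{3}$, i.e. $\nablaLdtwo\phionetildefunc{\Un{n}}=\nablaLdtwo\phionefunc{\Un{n}}$. By Corollary~\ref{cor:dfn_phi}, the step from $\Un{n}$ to $\Un{n+1}$ is therefore exactly one step of the \Iwade\ of Definition~\ref{dfn:iwade}, with the following data.
\begin{itemize}
\item $\nuone=\phionetilde$, which is $3M^{2}$-smooth.
\item For $\eta<1$: $\nutwo=\phitwo+\phifour$ with $\mu_{2}=1-\eta$, and $\nuthree=\phithree$ with $\mu_{3}=0$.
\item For $\eta\geq1$: $\nutwo=\phifour$ with $\mu_{2}=0$, and $\nuthree=-\phitwo+\phithree$ with $\mu_{3}=\eta-1$.
\end{itemize}
All smoothness and convexity facts come from Lemma~\ref{lem:SH_L_convex_mu}. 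In both cases $\mu_{2}+\mu_{3}=|1-\eta|$ and $\nusum=\tilde{H}$.

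\textbf{Step 2: one-step decrease of $\tilde{H}$.} The computation in the proof of Theorem~\ref{thm:iwade_general_mu} is a one-step estimate: it only uses the inequalities at the pair $(\Un{n},\Un{n+1})$. Under the stated time-step condition it gives
\[
\Htilfunc{\Un{n+1}}\leq\Htilfunc{\Un{n}}.
\]

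\textbf{Step 3: close the induction.} Since $\Un{n}$ lies in $[-M,M]$, we have $\Htilfunc{\Un{n}}=\Hfunc{\Un{n}}\leq\Hfunc{\Un{0}}$. The lower bound of Lemma~\ref{lem:Energy_lower_bound} for $\tilde{H}$ (valid because $\zetaa\leq M^{2}$) then gives
\[
\frac{\Ceez}{\CC^{2}}\normLd{\infty}{\Un{n+1}}^{2}-\frac{\zetaa^{2}}{4}\Lx\Ly\Lz\leq\Htilfunc{\Un{n+1}}\leq\Hfunc{\Un{0}},
\]
so $\normLd{\infty}{\Un{n+1}}\leq B\leq M$. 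Knowing now that $\Un{n+1}$ also lies in $[-M,M]$, we get $\Htilfunc{\Un{n+1}}=\Hfunc{\Un{n+1}}$, and therefore $\Hfunc{\Un{n+1}}\leq\Hfunc{\Un{n}}$. This completes the induction.

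\textbf{Main obstacle.} The delicate point is the apparent circularity: we need the $L^\infty$ bound on $\Un{n+1}$ in order to identify $\tilde{H}$ with $H$ there. The argument avoids this by never assuming anything about $\Un{n+1}$ beforehand. The tilde-energy decrease requires only that $\Un{n}$ is bounded, which is needed so the scheme's cubic term matches $\nablaLdtwo\phionetilde$. The lower bound for $\tilde{H}$ holds for all grid functions, given $\zetaa\leq M^{2}$; this is precisely why $M$ includes $\sqrt{\max\{0,\zetaa\}}$.

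A secondary check: Lemma~\ref{lem:Energy_lower_bound} relies on Lemma~\ref{lem:Sobolev_discrete_Laplacian}, whose boundary conditions are imposed in Definition~\ref{dfn:iwade_SH}. We must confirm these hold for every iterate, including $\Un{0}$, which follows from the boundary conditions built into the scheme.
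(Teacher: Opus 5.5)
Your proof is correct and follows essentially the paper's argument: the truncated functional $\tilde{\phi}_{1}$, the one-step decrease from Theorem~\ref{thm:iwade_general_mu}, the lower bound for $\tilde{H}$ from Lemma~\ref{lem:Energy_lower_bound} (valid because $\zeta\le M^{2}$), and an induction on $n$. The only difference is bookkeeping. The paper first builds the whole auxiliary sequence $\tilde{U}^{(n)}$, obtains monotonicity of $\tilde{H}$ along it at once, and then shows $U^{(n)}=\tilde{U}^{(n)}$ by induction; you instead apply the one-step estimate directly to the true iterates and carry the monotonicity of $H$ in your induction hypothesis.
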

\begin{proof}
    Let $\Utiln{0}=\Un{0}$, and define $\Utiln{n}$ recursively by
    \[
        \begin{split}
            &\frac{\Utiln{n+1}-\Utiln{n}}{\Dt}\\
            &\quad=\begin{dcases}
                -\left[\nablaLdtwo\phionetildefunc{\Utiln{n}}+\left\{\nablaLdtwo\phitwofunc{\Utiln{n+1}}+\nablaLdtwo\phifourfunc{\Utiln{n+1}}\right\}-\nablaLdtwo\phithreefunc{\Utiln{n}}\right]\\
                \quad\quad\quad\quad\quad\quad\quad\quad\quad\quad\quad\quad\quad\quad\quad\quad\quad\quad\quad\quad\quad\quad\quad\quad\quad\quad\quad\quad\quad\quad\quad(\eta<1),\\
                -\left[\nablaLdtwo\phionetildefunc{\Utiln{n}}+\nablaLdtwo\phifourfunc{\Utiln{n+1}}-\left\{-\nablaLdtwo\phitwofunc{\Utiln{n}}+\nablaLdtwo\phithreefunc{\Utiln{n}}\right\}\right]\\
                \quad\quad\quad\quad\quad\quad\quad\quad\quad\quad\quad\quad\quad\quad\quad\quad\quad\quad\quad\quad\quad\quad\quad\quad\quad\quad\quad\quad\quad\quad\quad(\eta\geq 1).
            \end{dcases}
        \end{split}
    \]
    Suppose that it satisfies the same boundary conditions as in Definition~\ref{dfn:iwade_SH}. Then, by Theorem~\ref{thm:iwade_unique}, there exists a unique sequence $\Utiln{n}$. Moreover, Theorem~\ref{thm:iwade_general_mu} and Lemma~\ref{lem:SH_L_convex_mu} imply that if $3M^{2}\leq\left|1-\eta\right|$ or $\begin{dcases}
        3M^{2}>\left|1-\eta\right|\\
        \Dt\leq\frac{2}{3M^{2}-\left|1-\eta\right|}
    \end{dcases}$
    holds, then $\Htilfunc{\Utiln{n}}$ is non-increasing in $n$.
    \par Because of Lemma~\ref{lem:Energy_lower_bound}, for arbitrary $\zetaa>\frac{1}{\varepsilon}+\eta-1$,
    \[
        \normLd{\infty}{\Utiln{0}}=\normLd{\infty}{\Un{0}}\leq\sqrt{\frac{\CC^{2}}{\Ceez}\left(\Hfunc{\Un{0}}+\frac{\zetaa^{2}}{4}\Lx\Ly\Lz\right)}.
    \]
    Here, if
    \[
        M=\max\mleft\{\sqrt{\max\mleft\{0,\zetaa\mright\}},\sqrt{\frac{\CC}{\Ceez}\left(\Hfunc{\Un{0}}+\frac{\zetaa^{2}}{4}\Lx\Ly\Lz\right)}\mright\},
    \]
    since $\normLd{\infty}{\Utiln{0}}=\normLd{\infty}{\Un{0}}\leq M$, $\Htilfunc{\Utiln{0}}=\Hfunc{\Utiln{0}}=\Hfunc{\Un{0}}$ holds.
    \par We will prove $\Un{n}=\Utiln{n}$ and $\normLd{\infty}{\Un{n}}\leq\sqrt{\frac{\CC^{2}}{\Ceez}\left(\Hfunc{\Un{0}}+\frac{\zetaa^{2}}{4}\Lx\Ly\Lz\right)}$ below by mathematical induction.
    \par The case $n=0$ is clear from the preceding discussion.
    \par We assume that the statement holds for $n-1$ and prove it for $n$. By the induction hypothesis, $\normLd{\infty}{\Utiln{n-1}}=\normLd{\infty}{\Un{n-1}}\leq M$ holds, and it follows that $\nablaLdtwo\phionefunc{\Un{n-1}}=\nablaLdtwo\phionetildefunc{\Utiln{n-1}}$. Hence, the update equations for $\Un{n}$ and $\Utiln{n}$ coincide; hence $\Un{n}=\Utiln{n}$. Moreover, Lemma~\ref{lem:Energy_lower_bound} yields
    \[
        \normLd{\infty}{\Un{n}}=\normLd{\infty}{\Utiln{n}}\leq\sqrt{\frac{\CC^{2}}{\Ceez}\left(\Htilfunc{\Utiln{n}}+\frac{\zetaa^{2}}{4}\Lx\Ly\Lz\right)}.
    \]
    This equation and $\Htilfunc{\Utiln{n}}\leq\Htilfunc{\Utiln{0}}=\Hfunc{\Un{0}}$ yield
    \[
        \normLd{\infty}{\Un{n}}=\normLd{\infty}{\Utiln{n}}\leq\sqrt{\frac{\CC^{2}}{\Ceez}\left(\Hfunc{\Un{0}}+\frac{\zetaa^{2}}{4}\Lx\Ly\Lz\right)}.
    \]
    \par From the above discussion, it is proved that for arbitrary $n$, $\Un{n}=\Utiln{n}$ and $\normLd{\infty}{\Un{n}}\leq\sqrt{\frac{\CC^{2}}{\Ceez}\left(\Hfunc{\Un{0}}+\frac{\zetaa^{2}}{4}\Lx\Ly\Lz\right)}$ hold. The fact that $\Hfunc{\Un{n}}$ is non-increasing in $n$ follows from
    \[
        \begin{dcases}
            \normLd{\infty}{\Un{n-1}}=\normLd{\infty}{\Utiln{n-1}}\leq M,\\
            \normLd{\infty}{\Un{n}}=\normLd{\infty}{\Utiln{n}}\leq M,
        \end{dcases}
    \]
    which implies
    \[
        \Hfunc{\Un{n}}=\Htilfunc{\Utiln{n}}\leq\Htilfunc{\Utiln{n-1}}=\Hfunc{\Un{n-1}}.
    \]
\end{proof}
\subsection{Error Analysis for the Proposed Scheme}
In this subsection, we establish an error estimate for the scheme in Definition~\ref{dfn:iwade_SH}, based on Theorem~\ref{thm:error_iwade}.
\begin{thm}
    Suppose that the exact solution $\uu$ of the Swift--Hohenberg equation satisfies $\uu\in C^{6}$. Assume further that the initial data are consistent, i.e., $\Uijkn{i}{j}{k}{0}=\uu_{0}\mleft(i\Dx,j\Dy,k\Dz\mright)$ for all grid indices $(i,j,k)$. In addition, for any $\zetaa>\frac{1}{\varepsilon}+\eta-1$, define the constant $\Ceez$ as in Lemma~\ref{lem:Energy_lower_bound}, let $M$ be as in Theorem~\ref{thm:scheme_sup_bound}, and set
    \[
        \Md\coloneqq\max\mleft\{M,\sqrt{\frac{\CCd^{2}}{\Ceez}\left(\Htruefunc{\uu_{0}}+\frac{\zetaa^{2}}{4}|\Omega|\right)}\mright\}.
    \]
    Furthermore, assume that $\Lhat\coloneqq 3\Md^{2}-\left|1-\eta\right|\neq 0$. \footnote{This assumption is not essential (cf. Theorem~\ref{thm:error_iwade}). }
    Then, if $\Lhat<0$ or $\begin{cases}
        \Lhat>0\\
        \Dt<\frac{2}{\Lhat}
    \end{cases}$, and if $\Dt$ also satisfies the condition of Theorem~\ref{thm:scheme_sup_bound}, the solution of the numerical scheme converges to the exact solution with rate $\order{\Dt+(\Dx)^{2}+(\Dy)^{2}+(\Dz)^{2}}$ in the meaning of the discrete $L^{2}$ norm.
\end{thm}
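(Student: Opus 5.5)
The plan is to reduce to Theorem~\ref{thm:error_iwade} (in its simplified form, Corollary~\ref{cor:error_iwade_special}, or with parameters $\rr,\kappan{1},\kappan{2}$ chosen so that $\rr\kappan{2}$ is close to $2$) applied to the modified scheme with the truncated functional $\phionetilde$. The truncation level is $\Md$ rather than $M$. Theorem~\ref{thm:scheme_sup_bound} gives $\normLd{\infty}{\Un{n}}\leq M\leq\Md$, and \eqref{equ:u_bound} gives $\max|\uu|\leq\Md$. Hence, with $\psi$ built from $\Md$, the quantity $\nablaLdtwo\phionetilde$ coincides with $\UU^{3}$ both at the numerical solution and at the sampled exact solution $\un{n}$.

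\textbf{Setting up the framework.} First I would apply the framework with $\nuone=\phionetilde$ (now $3\Md^{2}$-smooth by Lemma~\ref{lem:SH_L_convex_mu}), $\LL=3\Md^{2}$, and $\mu_{2}+\mu_{3}=|1-\eta|$. In the case $\eta<1$ take $\nutwo=\phitwo+\phifour$ and $\nuthree=\phithree$; in the case $\eta\geq1$ take $\nutwo=\phifour$ and $\nuthree=-\phitwo+\phithree$. This gives $\Lhat=3\Md^{2}-|1-\eta|$.

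\textbf{Checking the hypotheses of Theorem~\ref{thm:error_iwade}.} The gradients $\nablaLdtwo\nutwo$ and $\nablaLdtwo\nuthree$ are linear (multiples of $\UU$, $\deltwo\UU$, $\delfour\UU$), and so is the continuous $\nutiltwo$. The energy $\nusum=\Htil$ is bounded below by Lemma~\ref{lem:Energy_lower_bound}, since $\zetaa\leq\Md^{2}$. The regularity $\uu\in C^{6}$ covers $C^{2}$ in time, and it makes $\nutiltwofunc{\partial_t\uu}$ (which involves $\Delta^{2}\partial_t\uu$) well defined. The remaining point is that the exact-solution iterates satisfy the same recursion structure. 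Because the modified scheme with $\phionetilde$ coincides with the original scheme along the bounded iterates, the error $\en{n}$ is the same for both. This is also why the Lipschitz step $\normLdtwo{\nablaLdtwo\nuonefunc{\Un{n}}-\nablaLdtwo\nuonefunc{\un{n}}}\leq\LL\normLdtwo{\en{n}}$ is legitimate.

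\textbf{Time-step condition.} When $\Lhat>0$ and $\Dt<2/\Lhat$, I would pick $\kappan{2}$ close to $2$ and $\rr$ close to $1$ so that $\Dt\leq\rr\kappan{2}/\Lhat$, with $\kappan{1}$ small enough that $\kappan{1}+\kappan{2}<2$. The resulting constants then depend only on $\Dt\Lhat$, $\Md$, and $T$.

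\textbf{Consistency: the main obstacle.} The main work is bounding the consistency term $\gamman{\nd+\frac12}$ by $\order{(\Dx)^2+(\Dy)^2+(\Dz)^2}$ uniformly in $\nd$. I would write $\gamman{\nd+\frac12}$ as the difference between the discrete operators $\un{\nd}{}^{3}$, $(1-\eta)\un{\cdot}$, $\varepsilon\delfour\un{\nd+1}$, $2\deltwo\un{\nd}$ and their continuous counterparts sampled at the grid. The cubic and linear terms cancel exactly. For the interior points, Taylor expansion with $\uu\in C^{6}$ bounds $\deltwo\uu-\Delta\uu$ by $\order{h^2}$ (using $C^4$) and $\delfour\uu-\Delta^2\uu$ by $\order{h^2}$ (using $C^6$).

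The delicate part is the boundary and near-boundary nodes. There $\delfour$ uses ghost values defined through the discrete Neumann conditions $\deli\UU=0$ and $\deli\deltwo\UU=0$. I would extend $\uu$ evenly across each face. The continuous conditions $\partial_n\uu=0$ and $\partial_n\Delta\uu=0$ make the even extension $C^{3}$; I expect that, together with the symmetry of the stencil, this still yields second-order accuracy, treating $\un{n}$ so that it satisfies the discrete boundary conditions. If that fails, I would absorb an $\order{h^{2}}$ boundary defect through a discrete $\ell^2$ weighting argument.

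Inserting this bound and $\CT\Dt$ into Corollary~\ref{cor:error_iwade_special} yields the claimed rate $\order{\Dt+(\Dx)^{2}+(\Dy)^{2}+(\Dz)^{2}}$.
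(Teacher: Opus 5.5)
Your overall route is the paper's. You truncate the quartic at level $\Md$, identify the truncated iterates with $\Un{n}$ via Theorem~\ref{thm:scheme_sup_bound}, and use $\max|\uu|\le\Md$ to remove the cubic part of $\gamman{n+\frac12}$. You then invoke Theorem~\ref{thm:error_iwade} and finish by Taylor expansion of $\varepsilon(\delfour\un{n+1}-\Deltatwoun{n+1})+2(\deltwo\un{n}-\Deltaun{n})$. Your lower bound for $\nusum$ through Lemma~\ref{lem:Energy_lower_bound}, with $\zetaa\le M^{2}\le\Md^{2}$, is a valid substitute for the paper's completing-the-square computation.

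The gap is at the step you flagged yourself: the mechanism you propose there does not give second order. The discrete boundary conditions force $\uijkn{-1}{j}{k}{n}=\uijkn{1}{j}{k}{n}$ and $\uijkn{-2}{j}{k}{n}=\uijkn{2}{j}{k}{n}$ (likewise at the other faces), i.e.\ the even reflection. The conditions $\partial_n\uu=\partial_n\Delta\uu=0$ give only $\partial_n\uu=\partial_n^{3}\uu=0$. So the reflection is $C^{4}$, but its fifth normal derivative jumps by $2\partial_n^{5}\uu$, and stencil symmetry does not cancel this. At $i=0$ the $x$-part of $\delfour$ is
\[
\frac{2\uijkn{2}{j}{k}{n}-8\uijkn{1}{j}{k}{n}+6\uijkn{0}{j}{k}{n}}{(\Dx)^{4}}=\partial_x^{4}\uu+\frac{7}{15}\,\partial_x^{5}\uu\,\Dx+O\bigl((\Dx)^{2}\bigr),
\]
and there is a similar $O(\Dx)$ defect at $i=1$. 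With only the information you use, $\gamman{n+\frac12}$ is $O(h)$ on two node layers next to each face, hence $O(h^{3/2})$ in $\normLdtwo{\cdot}$. Theorem~\ref{thm:error_iwade} uses $\normLdtwo{\gamman{\nd+\frac12}}$ directly, so no weighting argument recovers $h^{2}$. Your fallback also misstates the defect as $O(h^{2})$; if it were, there would be nothing to absorb.

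The missing ingredient is a compatibility condition supplied by the equation itself. Apply $\partial_n$ to \eqref{equ:SH} on a face. There $\partial_n\uu_t$, $\partial_n(\uu^{3})=3\uu^{2}\partial_n\uu$, $\partial_n\uu$ and $\partial_n\Delta\uu$ all vanish, so $\partial_n\Delta^{2}\uu=0$. Now expand $\Delta^{2}$, using that $\partial_n\uu$ and $\partial_n^{3}\uu$ vanish identically on the face, hence so do their tangential derivatives. This yields $\partial_n^{5}\uu=0$. Consequently every derivative of order at most six with odd normal order vanishes on the face. The even reflection is therefore $C^{6}$ across faces, and also across edges and corners when you reflect in each direction. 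The interior Taylor estimate then applies at every node, with constants depending on derivatives of $\uu$ up to order six, giving $O((\Dx)^{2}+(\Dy)^{2}+(\Dz)^{2})$ uniformly.

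The paper disposes of this step with a one-line appeal to Taylor's theorem after extending $\uu$ by the discrete boundary conditions. The compatibility argument above is what makes that line correct.

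A minor point: Corollary~\ref{cor:error_iwade_special} requires $\Dt\le\frac{1}{3\Lhat}$. To cover the whole range $\Dt<\frac{2}{\Lhat}$, finish with Theorem~\ref{thm:error_iwade} and the parameters $\rr,\kappan{1},\kappan{2}$ you chose, as the paper does.
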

\begin{proof}
    Define $\phione,\phitwo,\phithree,\phifour$ as in Corollary~\ref{cor:dfn_phi}. Also, let
    \[
        \psihatfunc{\Uijk{i}{j}{k}}\coloneqq\begin{dcases}
            \frac{3\Md^{2}}{2}\Uijk{i}{j}{k}^{2}+2\Md^{3}\Uijk{i}{j}{k}+\frac{3}{4}\Md^{4} & (\Uijk{i}{j}{k}<-\Md),\\
            \frac{1}{4}\Uijk{i}{j}{k}^{4} & (-\Md\leq\Uijk{i}{j}{k}\leq \Md),\\
            \frac{3\Md^{2}}{2}\Uijk{i}{j}{k}^{2}-2\Md^{3}\Uijk{i}{j}{k}+\frac{3}{4}\Md^{4} & (\Uijk{i}{j}{k}>\Md),
        \end{dcases}
    \]
    and define
    \[
        \phionehatfunc{\UU}\coloneqq\sumd{i=0}{\Nx}\sumd{j=0}{\Ny}\sumd{k=0}{\Nz}\psihatfunc{\Uijk{i}{j}{k}}\Dx\Dy\Dz.
    \]
    Furthermore, let $\Uhatn{n}$ be given by $\Uhatn{0}=\Un{0}$ and
    \[
        \begin{split}
            &\frac{\Uhatn{n+1}-\Uhatn{n}}{\Dt}\\
            &\quad=\begin{dcases}
                -\left[\nablaLdtwo\phionehatfunc{\Uhatn{n}}+\left\{\nablaLdtwo\phitwofunc{\Uhatn{n+1}}+\nablaLdtwo\phifourfunc{\Uhatn{n+1}}\right\}-\nablaLdtwo\phithreefunc{\Uhatn{n}}\right]\\
                \quad\quad\quad\quad\quad\quad\quad\quad\quad\quad\quad\quad\quad\quad\quad\quad\quad\quad\quad\quad\quad\quad\quad\quad\quad\quad\quad\quad\quad\quad\quad(\eta<1),\\
                -\left[\nablaLdtwo\phionehatfunc{\Uhatn{n}}+\nablaLdtwo\phifourfunc{\Uhatn{n+1}}-\left\{-\nablaLdtwo\phitwofunc{\Uhatn{n}}+\nablaLdtwo\phithreefunc{\Uhatn{n}}\right\}\right]\\
                \quad\quad\quad\quad\quad\quad\quad\quad\quad\quad\quad\quad\quad\quad\quad\quad\quad\quad\quad\quad\quad\quad\quad\quad\quad\quad\quad\quad\quad\quad\quad(\eta\geq 1),
            \end{dcases}
        \end{split}
    \]
    with the same boundary conditions as in Definition~\ref{dfn:iwade_SH}. 
    \par Next, define
    \[
        \begin{split}
            \nuone&\coloneqq\phionehat,\\
            \nutwo&\coloneqq\begin{dcases}
                \phitwo+\phifour & (\eta<1),\\
                \phifour & (\eta\geq1),
            \end{dcases}\\
            \nuthree&\coloneqq\begin{dcases}
                \phithree & (\eta<1),\\
                -\phitwo+\phithree & (\eta\geq1),
            \end{dcases}\\
            \nutilonefunc{\uu}&\coloneqq\uu^{3},\\
            \nutiltwofunc{\uu}&\coloneqq\begin{dcases}
                (1-\eta)\uu+\varepsilon\Delta^{2}\uu & (\eta<1),\\
                \varepsilon\Delta^{2}\uu & (\eta\geq1),
            \end{dcases}\\
            \nutilthreefunc{\uu}&\coloneqq\begin{dcases}
                -2\Delta\uu & (\eta<1),\\
                (\eta-1)\uu-2\Delta\uu & (\eta\geq1).
            \end{dcases}
        \end{split}
    \]
    It is clear that $\nutwo,\nuthree,\nutiltwo$ are linear. Moreover, since $\frac{1}{\varepsilon}+\eta-1<\zetaa\leq\Md^{2}$, Lemma~\ref{lem:integration_by_parts} together with the boundary conditions implies
    \[
        \begin{split}
            \nusumfunc{\UU}&\coloneqq\nuonefunc{\UU}+\nutwofunc{\UU}-\nuthreefunc{\UU}\\
            &=\sumd{i=0}{\Nx}\sumd{j=0}{\Ny}\sumd{k=0}{\Nz}\left(\psihatfunc{\Uijk{i}{j}{k}}+\frac{1}{2}\left(1-\eta-\frac{1}{\varepsilon}\right)\Uijk{i}{j}{k}^{2}\right.\\
            &\quad\quad\quad\quad\quad\quad\quad\quad\left.+\left(\sqrt{\frac{\varepsilon}{2}}\deltwo\Uijk{i}{j}{k}+\sqrt{\frac{1}{2\varepsilon}}\Uijk{i}{j}{k}\right)^{2}\right)\Dx\Dy\Dz\\
            &\geq\sumd{i=0}{\Nx}\sumd{j=0}{\Ny}\sumd{k=0}{\Nz}\left(\psihatfunc{\Uijk{i}{j}{k}}-\frac{\Md^{2}}{2}\Uijk{i}{j}{k}^{2}\right)\Dx\Dy\Dz\\
            &\geq-\frac{1}{4}\Md^{4}\Lx\Ly\Lz.
        \end{split}
    \]
    The last inequality follows from
    \[
        \begin{split}
            \psihatfunc{\Uijk{i}{j}{k}}-\frac{\Md^{2}}{2}\Uijk{i}{j}{k}^{2}&=\begin{dcases}
                \Md^{2}\Uijk{i}{j}{k}^{2}+2\Md^{3}\Uijk{i}{j}{k}+\frac{3}{4}\Md^{4} & (\Uijk{i}{j}{k}<-\Md)\\
                \frac{1}{4}\Uijk{i}{j}{k}^{4}-\frac{\Md^{2}}{2}\Uijk{i}{j}{k}^{2} & (-\Md\leq\Uijk{i}{j}{k}\leq\Md)\\
                \Md^{2}\Uijk{i}{j}{k}^{2}-2\Md^{3}\Uijk{i}{j}{k}+\frac{3}{4}\Md^{4} & (\Uijk{i}{j}{k}>\Md)
            \end{dcases}\\
            &\geq-\frac{1}{4}\Md^{4}.
        \end{split}
    \]
    In addition, $\phionehat$ can be shown to be $3\Md^{2}$-smooth in the same way as in Lemma~\ref{lem:SH_L_convex_mu}. Since by assumption there exist some $\rr\in(0,1)$ and positive constants $\kappan{1},\kappan{2}$ satisfying $\kappan{1}+\kappan{2}<2$, and $\Lhat<0$ or $\begin{cases}
        \Lhat>0\\
        \Dt\leq\frac{\rr\kappan{2}}{\Lhat}
    \end{cases}$, Theorem~\ref{thm:error_iwade} together with Lemma~\ref{lem:SH_L_convex_mu} yields the following estimate:
    \[
        \begin{split}
            &\quad\normLdtwo{\Uhatn{n}-\un{n}}\\
            &\leq\sqrt{\frac{\kappan{1}\kappan{2}}{\left(2-\kappan{1}-\kappan{2}\right)\left(\kappan{1}\Lhat^{2}+36\kappan{2}\Md^{4}\right)}}\left(\CT\Dt+\max_{\nd\in\{0,\dots,n-1\}}\normLdtwo{\gamman{\nd+\frac{1}{2}}}\right)\\
            &\quad\quad\quad\quad\quad\quad\quad\quad\quad\quad\quad\quad\times\sqrt{\exp(\left(\frac{36\Md^{4}}{\kappan{1}\Lhat}+\frac{\Lhat}{\kappan{2}\left(1-\rr\thetafunc{\Lhat}\right)}\right)T)-1},
        \end{split}
    \]
    where 
    \[
        \begin{split}
            \gamman{n+\frac{1}{2}}&=\left\{\nablaLdtwo\nuonefunc{\un{n}}+\nablaLdtwo\nutwofunc{\un{n+1}}-\nablaLdtwo\nuthreefunc{\un{n}}\right\}\\
            &\quad-\left\{\iotaafunc{\nutilonefunc{\unn{n}}}+\iotaafunc{\nutiltwofunc{\unn{n+1}}}-\iotaafunc{\nutilthreefunc{\unn{n}}}\right\}\\
            &=\begin{dcases}
                \left\{\nablaLdtwo\phionehatfunc{\un{n}}+\left\{(1-\eta)\un{n+1}+\varepsilon\delfour\un{n+1}\right\}+2\deltwo\un{n}\right\}\\
                \quad\quad-\left\{\un{n}^{3}+(1-\eta)\un{n+1}+\varepsilon\Deltatwoun{n+1}+2\Deltaun{n}\right\} & (\eta<1)\\
                \left\{\nablaLdtwo\phionehatfunc{\un{n}}+\varepsilon\delfour\un{n+1}-\left\{(\eta-1)\un{n}-2\deltwo\un{n}\right\}\right\}\\
                \quad\quad-\left\{\un{n}^{3}+\varepsilon\Deltatwoun{n+1}-(\eta-1)\un{n}+2\Deltaun{n}\right\} & (\eta\geq 1)
            \end{dcases}\\
            &=\left(\nablaLdtwo\phionehatfunc{\un{n}}+\varepsilon\delfour\un{n+1}+2\deltwo\un{n}\right)-\left(\un{n}^{3}+\varepsilon\Deltatwoun{n+1}+2\Deltaun{n}\right).
        \end{split}
    \]
    Here, we extend the definition of $\uijkn{i}{j}{k}{n}$ to $(i\Dx,j\Dy,k\Dz)\notin\Omega$ so that it satisfies the same boundary conditions as in Definition~\ref{dfn:iwade_SH}, i.e.,
    \[
        \begin{dcases}
            \deliu{0}=\deliu{\Nx}=0,\\
            \delju{0}=\delju{\Ny}=0,\\
            \delku{0}=\delku{\Nz}=0,\\
            \delidelijku{0}=\delidelijku{\Nx}=0,\\
            \deljdelijku{0}=\deljdelijku{\Ny}=0,\\
            \delkdelijku{0}=\delkdelijku{\Nz}=0.\\
        \end{dcases}
    \]
    Since Theorem~\ref{thm:scheme_sup_bound} ensures that $\normLd{\infty}{\Un{n}}\leq M\leq\Md$, it follows that $\Un{n}^{3}=\nablaLdtwo\phionehatfunc{\Un{n}}$, namely $\Un{n}=\Uhatn{n}$. In particular, 
    \[
        \begin{split}
            &\quad\normLdtwo{\en{n}}\\
            &\leq\sqrt{\frac{\kappan{1}\kappan{2}}{\left(2-\kappan{1}-\kappan{2}\right)\left(\kappan{1}\Lhat^{2}+36\kappan{2}\Md^{4}\right)}}\left(\CT\Dt+\max_{\nd\in\{0,\dots,n-1\}}\normLdtwo{\gamman{\nd+\frac{1}{2}}}\right)\\
            &\quad\quad\quad\quad\quad\quad\quad\quad\quad\quad\quad\quad\times\sqrt{\exp(\left(\frac{36\Md^{4}}{\kappan{1}\Lhat}+\frac{\Lhat}{\kappan{2}\left(1-\rr\thetafunc{\Lhat}\right)}\right)T)-1}.
        \end{split}
    \]
    Moreover, since
    \[
        \max_{(x,y,z)\in\Omega}\left|u\right|\leq\sqrt{\frac{\CCd^{2}}{\Ceez}\left(\Htruefunc{\uu_{0}}+\frac{\zetaa^{2}}{4}|\Omega|\right)}\leq\Md,
    \]
    it follows that $\nablaLdtwo\phionehatfunc{\un{n}}=\un{n}^{3}$. Consequently,
    \[
        \gamman{n+\frac{1}{2}}=\varepsilon\left(\delfour\un{n+1}-\Deltatwoun{n+1}\right)+2\left(\deltwo\un{n}-\Deltaun{n}\right).
    \]
    By Taylor's theorem, this expression can be written as a linear combination of $(\Dx)^{2}$, $(\Dy)^{2}$, and $(\Dz)^{2}$, whose coefficients depend only on the exact solution and its derivatives up to sixth order. Consequently, there exists a constant $\CTdd$, depending only on the exact solution, such that
    \[
        \begin{split}
            &\quad\normLdtwo{\en{n}}\leq\sqrt{\frac{\kappan{1}\kappan{2}}{\left(2-\kappan{1}-\kappan{2}\right)\left(\kappan{1}\Lhat^{2}+36\kappan{2}\Md^{4}\right)}}\left(\CTdd\left(\Dt+(\Dx)^{2}+(\Dy)^{2}+(\Dz)^{2}\right)\right)\\
            &\quad\quad\quad\quad\quad\quad\quad\quad\quad\quad\quad\quad\times\sqrt{\exp(\left(\frac{36\Md^{4}}{\kappan{1}\Lhat}+\frac{\Lhat}{\kappan{2}\left(1-\rr\thetafunc{\Lhat}\right)}\right)T)-1}.
        \end{split}
    \]
    This completes the proof.
\end{proof}

\subsection{Asymptotic behavior}

Due to the dissipation law (Theorem~\ref{thm:scheme_sup_bound}), the asymptotic behavior of the proposed scheme can be justified by a Lyapunov-type theorem~\cite[Theorem~3.7]{SMSF2015} even when we employ adaptive step sizes. 

First, for the analysis of the asymptotic behavior, we examine the set $ \mathcal{E} $ of fixed points of the semi-discretized ordinary differential equation
\[ \frac{\mathrm{d}}{\mathrm{d}t} U = -U^3 - (1-\eta) U - \varepsilon \delfour U - 2\deltwo U. \]
Note that the zero vector $0$ is an element of $ \mathcal{E} $. 

Since the energy function can be rewritten as 
\[ \phi_1 (U) + \frac{1-\eta - \frac{1}{\varepsilon}}{2} \normLdtwo{U}^2 + \frac{\varepsilon}{2} \productLdtwo{U}{\left(  \deltwo + \frac{1}{\varepsilon} \right)^2 U }, \]
the energy function is strictly convex when $ 1- \eta - \frac{1}{\varepsilon} \ge 0 $. 
Therefore, in this case, $ \mathcal{E} = \{ 0 \} $ holds. 
In other cases, $ \mathcal{E} $ is not so simple.

\begin{thm}
    Suppose that the step sizes are chosen from a closed interval $ \left[ \Delta t_{\mathrm{l}}, \Delta t_{\mathrm{u}} \right] $, where $ \Delta t_{\mathrm{l}} $ and $\Delta t_{\mathrm{u}} $ are the prescribed lower and upper bounds satisfying 
    \[ 0 < \Delta t_{\mathrm{l}} \le \Delta t_{\mathrm{u}} < \begin{cases} \infty & \text{if } 3M^2 \le |1-\eta|, \\ \frac{2}{3M^2 - |1-\eta|} & \text{otherwise.} \end{cases} \]
    ($M$ is the constant defined in Theorem~\ref{thm:scheme_sup_bound}.)
    Then, the sequence $ \{ \Un{n} \}_{n=0}^{\infty} $ of numerical solutions of the proposed scheme possesses at least one accumulation point, and all accumulation points belong to $\mathcal{E}$. 
    In particular, if $ 1- \eta - \frac{1}{\varepsilon} \ge 0 $, then the sequence $ \{ \Un{n} \}_{n=0}^{\infty} $ converges to $0$. 
\end{thm}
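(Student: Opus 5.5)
The plan is a discrete LaSalle / Lyapunov-type argument driven by the dissipation inequality. Two ingredients are needed: uniform boundedness of the iterates, and summability of the squared increments. Both should carry over to variable step sizes.

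\textbf{Step 1: variable-step bound and dissipation.} Revisit the induction in the proof of Theorem~\ref{thm:scheme_sup_bound}. Each step of that proof uses only the single-step condition on the current $\Dt$, so it remains valid when $\Dt_n\in[\Delta t_{\mathrm{l}},\Delta t_{\mathrm{u}}]$ varies with $n$. Hence $\normLd{\infty}{\Un{n}}\le M$ for all $n$, and $\Un{n}=\Utiln{n}$. Moreover, the computation in the proof of Theorem~\ref{thm:iwade_general_mu}, with $\nuone=\phionetilde$ and $\LL=3M^2$, gives
\[
\Hfunc{\Un{n+1}}-\Hfunc{\Un{n}}\le-\Big(\frac{1}{\Dt_n}-\frac{3M^2-|1-\eta|}{2}\Big)\normLdtwo{\Un{n+1}-\Un{n}}^2 .
\]
Because $\Delta t_{\mathrm{u}}$ is \emph{strictly} below $2/(3M^2-|1-\eta|)$, the bracket is bounded below by a constant $c>0$ uniformly in $n$. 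In the case $3M^2\le|1-\eta|$ one can take $c=1/\Delta t_{\mathrm{u}}$.

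\textbf{Step 2: summability and existence of accumulation points.} $H$ is bounded below, by Lemma~\ref{lem:Energy_lower_bound} or by the bound $\nusum\ge-\tfrac14 M'^4|\Omega|$ derived in the error proof. Telescoping the inequality from Step 1 therefore gives $\sum_n\normLdtwo{\Un{n+1}-\Un{n}}^2<\infty$, so $\Un{n+1}-\Un{n}\to0$. The iterates lie in the compact set $\{\normLd{\infty}{U}\le M\}$ of the finite-dimensional space $\Ldtwo$. By Bolzano--Weierstrass, at least one accumulation point exists.

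\textbf{Step 3: accumulation points are fixed points.} Let $\Un{n_k}\to U^*$. Passing to a further subsequence, assume $\Dt_{n_k}\to\Dt^*\in[\Delta t_{\mathrm{l}},\Delta t_{\mathrm{u}}]$. By Step 2, $\Un{n_k+1}\to U^*$ as well. Rewrite the scheme as
\[
\frac{\Un{n+1}-\Un{n}}{\Dt_n}=-F(\Un{n},\Un{n+1}),
\]
where $F$ is continuous. The left-hand side tends to $0$, because $\Dt_n\ge\Delta t_{\mathrm{l}}>0$; this is exactly where the lower bound on the step size is used. Passing to the limit gives $F(U^*,U^*)=0$, that is,
\[
-U^{*3}-(1-\eta)U^*-\varepsilon\delfour U^*-2\deltwo U^*=0 ,
\]
so $U^*\in\mathcal{E}$. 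Equivalently, this step could be obtained by quoting \cite[Theorem~3.7]{SMSF2015} once the hypotheses established in Steps 1 and 2 are in hand.

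\textbf{Step 4: the convex case.} Suppose $1-\eta-\frac1\varepsilon\ge0$. Then $\mathcal{E}=\{0\}$, as observed before the theorem: the energy is strictly convex, so its only critical point is $0$. A bounded sequence in finite dimensions whose only accumulation point is $0$ converges to $0$; indeed, otherwise a subsequence would stay at distance at least $\delta>0$ from $0$, and by compactness it would have a different accumulation point.

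\textbf{Main obstacle.} The delicate point is Step 1. One must check that the induction in Theorem~\ref{thm:scheme_sup_bound} genuinely tolerates varying step sizes; it does, since $M$ depends only on $\Hfunc{\Un{0}}$. One must also secure a \emph{uniform} dissipation constant $c>0$, which relies on the strict upper inequality for $\Delta t_{\mathrm{u}}$. Without that strictness, the increments need not be summable.
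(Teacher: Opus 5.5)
Your argument is correct, but it takes a different route from the paper. You prove the LaSalle-type conclusion directly, whereas the paper delegates it to the abstract Lyapunov-type theorem of \cite{SMSF2015}.

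The paper only checks two hypotheses of that theorem. First, the sublevel set $\{U \mid \Hfunc{U}\le\Hfunc{\Un{0}}\}$ is compact, by Lemma~\ref{lem:Energy_lower_bound}. Second, if $\Hfunc{\Un{n+1}}=\Hfunc{\Un{n}}$ for an admissible step, then $\Un{n+1}=\Un{n}$, so $\Un{n}\in\mathcal{E}$. Existence of accumulation points, their location in $\mathcal{E}$, and the handling of adaptive steps all come from the cited theorem.

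You instead use the \emph{quantitative} form of the dissipation estimate. The uniform constant $c=1/\Delta t_{\mathrm{u}}-(3M^{2}-|1-\eta|)/2>0$ gives $\sum_{n}\normLdtwo{\Un{n+1}-\Un{n}}^{2}\le\bigl(\Hfunc{\Un{0}}-\inf_{U}\Hfunc{U}\bigr)/c$. You then pass to the limit in the scheme, using only continuity of the right-hand side in the state variables and $\Dt_{n}\ge\Delta t_{\mathrm{l}}$.

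What your route buys:
\begin{itemize}
\item It is self-contained.
\item It gives square-summable increments as a by-product.
\item It makes explicit that the induction in Theorem~\ref{thm:scheme_sup_bound} is purely step-by-step, so it holds for variable $\Dt_{n}$. The paper uses this tacitly.
\item It correctly treats the relation from Theorem~\ref{thm:iwade_general_mu} as an inequality. The paper's display writes it as an equality, which is not justified, though only the inequality is needed there.
\end{itemize}
The paper's route is shorter and reuses a general framework designed for adaptive time-stepping.

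One minor point: in your Step~3 you extract a convergent subsequence of $\Dt_{n_k}$, but this is unnecessary. The right-hand side $F(\Un{n},\Un{n+1})$ does not depend on $\Dt$, and the lower bound $\Delta t_{\mathrm{l}}$ alone makes the difference quotient vanish.
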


\begin{proof}
    It suffices to verify that the assumptions of \cite[Theorem 4.2]{SMSF2015} hold. 
    Since the set $ \{ U \mid \Hfunc{U} \le \Hfunc{\Un{0}} \} $ is compact due to Lemma~\ref{lem:Energy_lower_bound}, the only assumption that needs verification is that 
    $ \Hfunc{\Un{n+1}} = \Hfunc{\Un{n}} $ holds for some $ \Delta t \in \left[ \Delta t_{\mathrm{l}}, \Delta t_{\mathrm{u}} \right] $ only when $ \Un{n} \in \mathcal{E} $. 
    Using Theorem~\ref{thm:iwade_general_mu}, we see
    \[ \Hfunc{\Un{n+1}} - \Hfunc{\Un{n}} = - \left( \frac{1}{\Delta t} - \frac{3M^2 - |1-\eta|}{2} \right) \normLdtwo{ \Un{n+1} - \Un{n} }^2. \]
    The right-hand side vanishes only when $ \Un{n+1} = \Un{n} $, i.e., $ \Un{n} \in \mathcal{E} $. 
\end{proof}


\backmatter


\section*{Statements and Declarations}
\bmhead{Competing Interests} The authors declare they have no competing interests.

\begin{appendices}

\section{Proof of \eqref{equ:u_bound}}
Let $\omegaa$ be an arbitrary positive constant. Then,
\[
    \begin{split}
        \Htruefunc{\uu}&=\int_{\Omega}\left(\frac{1}{4}\uu^{4}+\frac{1-\eta}{2}\uu^{2}+\frac{\varepsilon}{2}(\Delta \uu)^{2}+\uu\Delta\uu\right)\dd{\bm{x}}\\
        &\geq\int_{\Omega}\left(\left(\frac{\zetaa}{2}\uu^{2}-\frac{\zetaa^{2}}{4}\right)+\frac{1-\eta}{2}\uu^{2}+\frac{\varepsilon}{2}(\Delta\uu)^{2}-\left(\frac{1}{2\omegaa}\uu^{2}+\frac{\omegaa}{2}(\Delta\uu)^{2}\right)\right)\dd{\bm{x}}\\
        &=\frac{1}{2}\left(\zetaa+1-\eta-\frac{1}{\omegaa}\right)\int_{\Omega}\uu^{2}\dd{\bm{x}}+\frac{\varepsilon-\omegaa}{2}\int_{\Omega}(\Delta\uu)^{2}\dd{\bm{x}}-\frac{\zetaa^{2}}{4}|\Omega|.
    \end{split}
\]
If we choose
\[
    \omegaa=\frac{\sqrt{(\varepsilon-1+\eta-\zetaa)^{2}+4}+\varepsilon-1+\eta-\zetaa}{2},
\]
then it follows that
\[
    \frac{1}{2}\left(\zetaa+1-\eta-\frac{1}{\omegaa}\right)=\frac{\varepsilon-\omegaa}{2}=\frac{3}{2}\Ceez.
\]
Hence,
\[
    \Htruefunc{\uu}\geq\frac{3}{2}\Ceez\int_{\Omega}\left(\uu^{2}+(\Delta\uu)^{2}\right)\dd{\bm{x}}-\frac{\zetaa^{2}}{4}|\Omega|.
\]
Here, we recall from the proof of Lemma~\ref{lem:Energy_lower_bound} that the constant $\Ceez$ is positive. Using this and noting that
\[
    \int_{\Omega}\left(\uu^{2}+(\Delta\uu)^{2}\right)\dd{\bm{x}}\geq\frac{2}{3}\int_{\Omega}\left(\uu^{2}+|\nabla\uu|^{2}+(\Delta\uu)^{2}\right)\dd{\bm{x}},
\]
which follows from
\[
    \int_{\Omega}|\nabla\uu|^{2}\dd{\bm{x}}=-\int_{\Omega}\uu\Delta\uu\dd{\bm{x}}\leq\frac{1}{2}\int_{\Omega}\left(\uu^{2}+(\Delta\uu)^{2}\right)\dd{\bm{x}},
\]
we deduce
\[
    \Htruefunc{\uu}\geq\Ceez\int_{\Omega}\left(\uu^{2}+|\nabla\uu|^{2}+(\Delta\uu)^{2}\right)\dd{\bm{x}}-\frac{\zetaa^{2}}{4}|\Omega|.
\]
Applying Corollary~9.13 in~\cite{Brezis} and Proposition 7.2 in~\cite{Yellow}, we obtain
\[
    \Htruefunc{\uu}\geq\frac{\Ceez}{\CCd^{2}}\left(\max_{(x,y,z)\in\Omega}\left|\uu\right|\right)^{2}-\frac{\zetaa^{2}}{4}|\Omega|
\]
for some constant $\CCd>0$. Hence,
\[
    \max_{(x,y,z)\in\Omega}\left|\uu\right|\leq\sqrt{\frac{\CCd^{2}}{\Ceez}\left(\Htruefunc{\uu}+\frac{\zetaa^{2}}{4}|\Omega|\right)}\leq\sqrt{\frac{\CCd^{2}}{\Ceez}\left(\Htruefunc{\uu_{0}}+\frac{\zetaa^{2}}{4}|\Omega|\right)}
\]
since $\dv{\Htruefunc{\uu}}{t}\leq 0$.

\section{Proof of Lemma~\ref{lem:Sobolev_discrete_Laplacian}}
We prove Lemma~\ref{lem:Sobolev_discrete_Laplacian}. We begin by fixing notation.
\par We define $\delii$ as the second-order difference operator in the $x$-direction, namely
\[
    \delii\fijk{i}{j}{k}\coloneqq\frac{\fijk{i+1}{j}{k}-2\fijk{i}{j}{k}+\fijk{i-1}{j}{k}}{(\Dx)^{2}},
\]
and similarly define $\deljj$ and $\delkk$ as the corresponding operators in the $y$- and $z$-directions, respectively. In addition, for a three-dimensional grid function $\fijk{i}{j}{k}$ and $\delta,\delta'\in\left\{\delpi,\delpj,\delpk,\delmi,\delmj,\delmk,\delii,\deljj,\delkk\right\}$, define $\normLdtwo{\delta\fF}$ and $\normLdtwo{\delta\delta'\fF}$ by
\[
    \begin{split}
        \normLdtwo{\delta\fF}&\coloneqq\left(\sumd{i=0}{\Nx}\sumd{j=0}{\Ny}\sumd{k=0}{\Nz}\left|\delta\fijk{i}{j}{k}\right|^{2}\Dx\Dy\Dz\right)^{\frac{1}{2}},\\
        \normLdtwo{\delta\delta'\fF}&\coloneqq\left(\sumd{i=0}{\Nx}\sumd{j=0}{\Ny}\sumd{k=0}{\Nz}\left|\delta\delta'\fijk{i}{j}{k}\right|^{2}\Dx\Dy\Dz\right)^{\frac{1}{2}}.
    \end{split}
\]
Moreover, define
\begin{equation}
    \label{equ:norm_DD}
    \begin{split}
        \normLdtwo{\DD^{2}\fF}&\coloneqq\left(\normLdtwo{\delii\fF}^{2}+\normLdtwo{\deljj\fF}^{2}+\normLdtwo{\delkk\fF}^{2}\right.\\
        &\quad\quad+\frac{\normLdtwo{\delpi\delpj\fF}^{2}+\normLdtwo{\delpi\delmj\fF}^{2}+\normLdtwo{\delmi\delpj\fF}^{2}+\normLdtwo{\delmi\delmj\fF}^{2}}{4}\\
        &\quad\quad+\frac{\normLdtwo{\delpj\delpk\fF}^{2}+\normLdtwo{\delpj\delmk\fF}^{2}+\normLdtwo{\delmj\delpk\fF}^{2}+\normLdtwo{\delmj\delmk\fF}^{2}}{4}\\
        &\quad\quad\left.+\frac{\normLdtwo{\delpi\delpk\fF}^{2}+\normLdtwo{\delpi\delmk\fF}^{2}+\normLdtwo{\delmi\delpk\fF}^{2}+\normLdtwo{\delmi\delmk\fF}^{2}}{4}\right)^{\frac{1}{2}}.
    \end{split}
\end{equation}
Define also
\[
    \begin{split}
        \normLdtwo{\nabladp\fF}&\coloneqq\left(\sumd{i=0}{\Nx}\sumd{j=0}{\Ny}\sumd{k=0}{\Nz}\left|\nabladp\fijk{i}{j}{k}\right|^{2}\Dx\Dy\Dz\right)^{\frac{1}{2}},\\
        \normLdtwo{\nabladm\fF}&\coloneqq\left(\sumd{i=0}{\Nx}\sumd{j=0}{\Ny}\sumd{k=0}{\Nz}\left|\nabladm\fijk{i}{j}{k}\right|^{2}\Dx\Dy\Dz\right)^{\frac{1}{2}}.\\
    \end{split}
\]
Furthermore, for any $\delta\in\left\{\delpi,\delpj,\delpk,\delmi,\delmj,\delmk\right\}$, define
\[
    \begin{split}
        \normLdtwo{\nabladp\delta\UU}&\coloneqq\left(\sumd{i=0}{\Nx}\sumd{j=0}{\Ny}\sumd{k=0}{\Nz}\abs{\nabladp\delta\Uijk{i}{j}{k}}^{2}\Dx\Dy\Dz\right)^{\frac{1}{2}},\\
        \normLdtwo{\nabladm\delta\UU}&\coloneqq\left(\sumd{i=0}{\Nx}\sumd{j=0}{\Ny}\sumd{k=0}{\Nz}\abs{\nabladm\delta\Uijk{i}{j}{k}}^{2}\Dx\Dy\Dz\right)^{\frac{1}{2}}.
    \end{split}
\]
\par Lemma~\ref{lem:Sobolev_discrete_Laplacian} follows from Lemma~\ref{lem:norm_of_Laplacian}, which in turn relies on Lemmas~\ref{lem:norm_of_like_Laplacian} and \ref{lem:sumd_i1_i}. The statements and proofs of Lemmas~\ref{lem:norm_of_like_Laplacian} and \ref{lem:norm_of_Laplacian} are adapted from Proposition~7.2 in~\cite{Yellow}.
\par We present the Neumann case; the periodic case follows with minor modifications.
\begin{lem}
    \label{lem:norm_of_like_Laplacian}
    Let $\Uijk{i}{j}{k}$ be a three-dimensional grid function satisfying the boundary conditions
    \begin{equation}
        \label{equ:neumann_BC_U}
        \begin{dcases}
            \deliUU{0}=\deliUU{\Nx}=0,\\
            \deljUU{0}=\deljUU{\Ny}=0,\\
            \delkUU{0}=\delkUU{\Nz}=0,\\
            \delidelijkUU{0}=\delidelijkUU{\Nx}=0,\\
            \deljdelijkUU{0}=\deljdelijkUU{\Ny}=0,\\
            \delkdelijkUU{0}=\delkdelijkUU{\Nz}=0.\\
        \end{dcases}
    \end{equation}
    Then,
    \[
        \begin{dcases}
            \productLdtwo{\deltwo\UU}{\delii\UU}=\frac{\normLdtwo{\nabladp\delpi\UU}^{2}+\normLdtwo{\nabladp\delmi\UU}^{2}+\normLdtwo{\nabladm\delpi\UU}^{2}+\normLdtwo{\nabladm\delmi\UU}^{2}}{4},\\
            \productLdtwo{\deltwo\UU}{\deljj\UU}=\frac{\normLdtwo{\nabladp\delpj\UU}^{2}+\normLdtwo{\nabladp\delmj\UU}^{2}+\normLdtwo{\nabladm\delpj\UU}^{2}+\normLdtwo{\nabladm\delmj\UU}^{2}}{4},\\
            \productLdtwo{\deltwo\UU}{\delkk\UU}=\frac{\normLdtwo{\nabladp\delpk\UU}^{2}+\normLdtwo{\nabladp\delmk\UU}^{2}+\normLdtwo{\nabladm\delpk\UU}^{2}+\normLdtwo{\nabladm\delmk\UU}^{2}}{4}.\\
        \end{dcases}
    \]
\end{lem}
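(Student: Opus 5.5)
We prove only the first identity; the other two follow by permuting the roles of the coordinate directions. Write $\deltwo=\delii+\deljj+\delkk$, using that $\delpi\delmi=\delmi\delpi=\delii$ and similarly in the other directions. Then
\[
\productLdtwo{\deltwo\UU}{\delii\UU}=\productLdtwo{\delii\UU}{\delii\UU}+\productLdtwo{\deljj\UU}{\delii\UU}+\productLdtwo{\delkk\UU}{\delii\UU}.
\]
The plan is to turn each of the three terms into a sum of squared mixed differences of the form $\normLdtwo{\delta\delta'\UU}^{2}$ by summation by parts. We then regroup these squares into the four quantities $\normLdtwo{\nabla_{\mathrm{d}}^{\pm}\delta_i^{\pm}\UU}^{2}$ appearing on the right-hand side.

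For the pure term, the right-hand side contributes the $x$-components $|\delpi\delpi\UU|^{2}$, $|\delpi\delmi\UU|^{2}$, $|\delmi\delpi\UU|^{2}$ and $|\delmi\delmi\UU|^{2}$, each with weight $\frac14$. The middle two are exactly $\normLdtwo{\delii\UU}^{2}$. So we must show
\[
\normLdtwo{\delii\UU}^{2}=\frac{\normLdtwo{\delpi\delpi\UU}^{2}+\normLdtwo{\delmi\delmi\UU}^{2}}{2}.
\]
This holds because $\delpi\delpi\UU_{i}=\delii\UU_{i+1}$ and $\delmi\delmi\UU_{i}=\delii\UU_{i-1}$. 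The two shifted trapezoidal sums therefore reproduce the unshifted one provided the boundary values match, i.e. provided $\delii\UU_{-1}=\delii\UU_{1}$ and $\delii\UU_{\Nx+1}=\delii\UU_{\Nx-1}$. These symmetries follow from the reflection conditions $\UU_{-1}=\UU_{1}$ and $\UU_{-2}=\UU_{2}$ implied by \eqref{equ:neumann_BC_U}. Indeed, $\deli\UU|_{0}=0$ gives the first reflection, and $\deli\deltwo\UU|_{0}=0$, combined with the first reflection applied in the transverse directions, gives the second. I would verify this extension argument carefully.

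For the cross terms, say $\productLdtwo{\deljj\UU}{\delii\UU}$, I would apply the one-dimensional version of Lemma~\ref{lem:integration_by_parts} in $j$ and then in $i$, moving one forward or backward difference at a time. This yields
\[
\productLdtwo{\deljj\UU}{\delii\UU}=\frac14\left(\normLdtwo{\delpi\delpj\UU}^{2}+\normLdtwo{\delpi\delmj\UU}^{2}+\normLdtwo{\delmi\delpj\UU}^{2}+\normLdtwo{\delmi\delmj\UU}^{2}\right).
\]
These are precisely the $y$-components of $\nabla_{\mathrm{d}}^{\pm}\delta_i^{\pm}\UU$, after using that $\delpj$ commutes with $\delpi$ and the other difference operators. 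The $z$ term is identical. Summing the three contributions gives the claim.

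The main obstacle is the boundary bookkeeping. The trapezoidal weights in $\sumd{}{}$ and the half-weights at corners must produce exactly the symmetric averages, with no leftover boundary terms. I would handle this uniformly by extending $\UU$ evenly across each face, which the conditions \eqref{equ:neumann_BC_U} permit. With this extension, every $\sumd{}{}$ sum equals half of a periodic sum over the doubled grid. On that doubled grid, summation by parts holds with no boundary terms, so each of the identities above reduces to an exact periodic computation.
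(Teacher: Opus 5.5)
Your proposal is correct, but it is organized differently from the paper's proof.

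\textbf{The paper's route.} The paper never splits $\deltwo$ by direction. It applies Lemma~\ref{lem:integration_by_parts} once, with $\delii\UU$ as one of the two grid functions, to rewrite $-\productLdtwo{\deltwo\UU}{\delii\UU}$ as $\sumd{i=0}{\Nx}\sumd{j=0}{\Ny}\sumd{k=0}{\Nz}\frac{1}{2}\left(\nabladp\UU\cdot\nabladp\delii\UU+\nabladm\UU\cdot\nabladm\delii\UU\right)\Dx\Dy\Dz$. It then writes this, together with the four norms on the right-hand side, as telescoping differences in $i$ of the vector quantities $\nabla_{\mathrm{d}}^{\pm}\UU$ and $\nabla_{\mathrm{d}}^{\pm}\delta_{i}^{\pm}\UU$. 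Lemma~\ref{lem:sumd_i1_i} collapses ``right-hand side minus left-hand side'' into boundary brackets at $i=0,\Nx$. Finally, it checks component by component that these brackets vanish, using $\Uijk{\Nx+1}{j}{k}=\Uijk{\Nx-1}{j}{k}$, $\Uijk{\Nx+2}{j}{k}=\Uijk{\Nx-2}{j}{k}$ and their analogues at $i=0$. This mirrors the continuous argument (one integration by parts of $\nabla u$ against $\nabla\partial_{x}^{2}u$) and treats all gradient components uniformly, at the cost of explicit boundary bookkeeping.

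\textbf{Your route.} You absorb all boundary handling into the hypotheses of the one-dimensional summation-by-parts lemma. These are verified once through the even reflection of $\UU$, and hence of $\delii\UU$ and $\delta_{j}^{\pm}\UU$. The pure term is handled by an index shift with no summation by parts at all. That shift identity is exactly the one the paper invokes later, in the proof of Lemma~\ref{lem:norm_of_Laplacian}.

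\textbf{What your route buys.} You get a finer, direction-by-direction identity for $\productLdtwo{\deljj\UU}{\delii\UU}$. Insert it into the expansion $\normLdtwo{\deltwo\UU}^{2}=\sum_{a}\normLdtwo{\delta_{a}^{\langle 2\rangle}\UU}^{2}+2\sum_{a<b}\productLdtwo{\delta_{a}^{\langle 2\rangle}\UU}{\delta_{b}^{\langle 2\rangle}\UU}$. This gives Lemma~\ref{lem:norm_of_Laplacian} immediately, without first passing through the present lemma and then undoing the shifted sums.

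\textbf{Two small points to tidy.}
\begin{itemize}
\item The reflections you need include the $j$- and $k$-reflections at the ghost columns $i=-1,\Nx+1$. You use these both to obtain $\Uijk{-2}{j}{k}=\Uijk{2}{j}{k}$ and to verify the hypotheses for $\delii\UU$. This requires reading the boundary conditions as also holding at ghost indices, which the paper assumes tacitly as well.
\item Your doubled-grid remark holds only for summands invariant under the reflections. For example, $|\delpi\delpi\UU|^{2}$ is mapped to $|\delmi\delmi\UU|^{2}$, so the two must be paired. The factor is one half per direction, hence $1/8$ in three dimensions. Your main argument does not need this device anyway.
\end{itemize}
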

\par In the proof of Lemma~\ref{lem:norm_of_like_Laplacian}, we use the following elementary identity.
\begin{lem}
    \label{lem:sumd_i1_i}
    For any $\bb\in\mathbb{Z}_{>\aaa}$, and any sequence $\fii{i}$,
    \[
        \sumd{i=\aaa}{\bb}\left(\fii{i+1}-\fii{i}\right)=\bigkakko{\frac{\fii{i+1}+\fii{i}}{2}}{i=\aaa}{\bb}
    \]
    holds, where
    \[
        \bigkakko{\fii{i}}{i=\aaa}{\bb}\coloneqq\fii{\bb}-\fii{\aaa}.
    \]
\end{lem}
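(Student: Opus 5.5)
The identity to prove is a discrete telescoping statement for the trapezoidal-weighted sum $\sumd{i=\aaa}{\bb}$. My plan is to expand the left-hand side using the definition of the weights and regroup, rather than going through a general summation-by-parts lemma.

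Write $g_i\coloneqq\fii{i+1}-\fii{i}$. By definition,
\[
    \sumd{i=\aaa}{\bb}g_i=\frac{g_{\aaa}}{2}+\sum_{i=\aaa+1}^{\bb-1}g_i+\frac{g_{\bb}}{2}.
\]
The interior sum telescopes to $\fii{\bb}-\fii{\aaa+1}$. The two boundary halves contribute $\frac{\fii{\aaa+1}-\fii{\aaa}}{2}+\frac{\fii{\bb+1}-\fii{\bb}}{2}$. Note that $\fii{\bb+1}$ appears here, which is why the sequence must be defined at index $\bb+1$. This is harmless, since in the application the grid functions are extended beyond the boundary, as in the error analysis.

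Adding these contributions gives
\[
    \fii{\bb}-\fii{\aaa+1}+\frac{\fii{\aaa+1}-\fii{\aaa}}{2}+\frac{\fii{\bb+1}-\fii{\bb}}{2}=\frac{\fii{\bb+1}+\fii{\bb}}{2}-\frac{\fii{\aaa+1}+\fii{\aaa}}{2},
\]
which is exactly $\bigkakko{\frac{\fii{i+1}+\fii{i}}{2}}{i=\aaa}{\bb}$.

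The only point needing care is the edge case $\bb=\aaa+1$, where the interior sum is empty. I would check it directly, with $\aaa=0$ and $\bb=1$ (recall that $\aaa$ denotes $0$ in the paper's notation):
\[
    \frac{\fii{1}-\fii{0}}{2}+\frac{\fii{2}-\fii{1}}{2}=\frac{\fii{2}-\fii{0}}{2}.
\]
This agrees with the claimed right-hand side, $\frac{\fii{2}+\fii{1}}{2}-\frac{\fii{1}+\fii{0}}{2}$.

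Alternatively, both cases can be handled at once by an index shift. One writes the sum as $\frac12\sum_{i=\aaa}^{\bb-1}g_i+\frac12\sum_{i=\aaa+1}^{\bb}g_i$ and telescopes each piece separately: the first gives $\frac{\fii{\bb}-\fii{\aaa}}{2}$ and the second gives $\frac{\fii{\bb+1}-\fii{\aaa+1}}{2}$. Their sum is again the claimed bracket. I would present this version, since it is uniform in $\bb\geq\aaa+1$. I expect no real obstacle; the step that needs the most attention is the bookkeeping of the half-weights at the endpoints.
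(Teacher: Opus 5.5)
Your proof is correct. The paper gives no proof of this identity and simply calls it elementary; your expansion of the half-weighted sum followed by telescoping, most cleanly via the split $\sumd{i=\aaa}{\bb}g_i=\frac12\sum_{i=\aaa}^{\bb-1}g_i+\frac12\sum_{i=\aaa+1}^{\bb}g_i$, is exactly the routine verification it leaves to the reader. Your first computation already covers $\bb=\aaa+1$, since there the empty interior sum equals $\fii{\bb}-\fii{\aaa+1}=0$, so the separate edge-case check is redundant but harmless.
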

\begin{proof}[Proof of Lemma~\ref{lem:norm_of_like_Laplacian}]
    By symmetry, it suffices to prove the first identity.
    \par By applying Lemma~\ref{lem:integration_by_parts} while taking the boundary conditions into account, we obtain
    \[
        \begin{split}
            &\quad\frac{\normLdtwo{\nabladp\delpi\UU}^{2}+\normLdtwo{\nabladp\delmi\UU}^{2}+\normLdtwo{\nabladm\delpi\UU}^{2}+\normLdtwo{\nabladm\delmi\UU}^{2}}{4}\\
            &\quad-\productLdtwo{\deltwo\UU}{\delii\UU}\\
            &=\frac{\normLdtwo{\nabladp\delpi\UU}^{2}+\normLdtwo{\nabladp\delmi\UU}^{2}+\normLdtwo{\nabladm\delpi\UU}^{2}+\normLdtwo{\nabladm\delmi\UU}^{2}}{4}\\
            &\quad+\sumd{i=0}{\Nx}\sumd{j=0}{\Ny}\sumd{k=0}{\Nz}\frac{\nabladp\Uijk{i}{j}{k}\cdot\nabladp\delii\Uijk{i}{j}{k}+\nabladm\Uijk{i}{j}{k}\cdot\nabladm\delii\Uijk{i}{j}{k}}{2}\Dx\Dy\Dz\\
            &=\sumd{i=0}{\Nx}\sumd{j=0}{\Ny}\sumd{k=0}{\Nz}\left(\frac{\nabladp\Uijk{i+1}{j}{k}-\nabladp\Uijk{i}{j}{k}}{4\Dx}\cdot\nabladp\delpi\Uijk{i}{j}{k}\right.\\
            &\quad\quad\quad\quad\quad\quad\quad\quad\quad+\frac{\nabladp\Uijk{i}{j}{k}-\nabladp\Uijk{i-1}{j}{k}}{4\Dx}\cdot\nabladp\delmi\Uijk{i}{j}{k}\\
            &\quad\quad\quad\quad\quad\quad\quad\quad\quad+\frac{\nabladm\Uijk{i+1}{j}{k}-\nabladm\Uijk{i}{j}{k}}{4\Dx}\cdot\nabladm\delpi\Uijk{i}{j}{k}\\
            &\quad\quad\quad\quad\quad\quad\quad\quad\quad\left.+\frac{\nabladm\Uijk{i}{j}{k}-\nabladm\Uijk{i-1}{j}{k}}{4\Dx}\cdot\nabladm\delmi\Uijk{i}{j}{k}\right)\Dx\Dy\Dz\\
            &\quad+\sumd{i=0}{\Nx}\sumd{j=0}{\Ny}\sumd{k=0}{\Nz}\left(\nabladp\Uijk{i}{j}{k}\cdot\frac{\nabladp\delpi\Uijk{i}{j}{k}-\nabladp\delpi\Uijk{i-1}{j}{k}}{4\Dx}\right.\\
            &\quad\quad\quad\quad\quad\quad\quad\quad\quad+\nabladp\Uijk{i}{j}{k}\cdot\frac{\nabladp\delmi\Uijk{i+1}{j}{k}-\nabladp\delmi\Uijk{i}{j}{k}}{4\Dx}\\
            &\quad\quad\quad\quad\quad\quad\quad\quad\quad+\nabladm\Uijk{i}{j}{k}\cdot\frac{\nabladm\delpi\Uijk{i}{j}{k}-\nabladm\delpi\Uijk{i-1}{j}{k}}{4\Dx}\\
            &\quad\quad\quad\quad\quad\quad\quad\quad\quad\left.+\nabladm\Uijk{i}{j}{k}\cdot\frac{\nabladm\delmi\Uijk{i+1}{j}{k}-\nabladm\delmi\Uijk{i}{j}{k}}{4\Dx}\right)\Dx\Dy\Dz\\
            &=\sumd{i=0}{\Nx}\sumd{j=0}{\Ny}\sumd{k=0}{\Nz}\left(\frac{\nabladp\Uijk{i+1}{j}{k}\cdot\nabladp\delpi\Uijk{i}{j}{k}-\nabladp\Uijk{i}{j}{k}\cdot\nabladp\delpi\Uijk{i-1}{j}{k}}{4\Dx}\right.\\
            &\quad\quad\quad\quad\quad\quad\quad\quad+\frac{\nabladp\Uijk{i}{j}{k}\cdot\nabladp\delmi\Uijk{i+1}{j}{k}-\nabladp\Uijk{i-1}{j}{k}\cdot\nabladp\delmi\Uijk{i}{j}{k}}{4\Dx}\\
            &\quad\quad\quad\quad\quad\quad\quad\quad+\frac{\nabladm\Uijk{i+1}{j}{k}\cdot\nabladm\delpi\Uijk{i}{j}{k}-\nabladm\Uijk{i}{j}{k}\cdot\nabladm\delpi\Uijk{i-1}{j}{k}}{4\Dx}\\
            &\quad\quad\quad\quad\quad\quad\quad\quad\left.+\frac{\nabladm\Uijk{i}{j}{k}\cdot\nabladm\delmi\Uijk{i+1}{j}{k}-\nabladm\Uijk{i-1}{j}{k}\cdot\nabladm\delmi\Uijk{i}{j}{k}}{4\Dx}\right)\Dx\Dy\Dz\\
        \end{split}
    \]
    \[
        \begin{split}
            &=\sumd{j=0}{\Ny}\sumd{k=0}{\Nz}\left[\frac{\nabladp\Uijk{i+1}{j}{k}\cdot\nabladp\delpi\Uijk{i}{j}{k}+\nabladp\Uijk{i}{j}{k}\cdot\nabladp\delpi\Uijk{i-1}{j}{k}}{8}\right.\\
            &\quad\quad\quad\quad\quad\quad+\frac{\nabladp\Uijk{i}{j}{k}\cdot\nabladp\delmi\Uijk{i+1}{j}{k}+\nabladp\Uijk{i-1}{j}{k}\cdot\nabladp\delmi\Uijk{i}{j}{k}}{8}\\
            &\quad\quad\quad\quad\quad\quad+\frac{\nabladm\Uijk{i+1}{j}{k}\cdot\nabladm\delpi\Uijk{i}{j}{k}+\nabladm\Uijk{i}{j}{k}\cdot\nabladm\delpi\Uijk{i-1}{j}{k}}{8}\\
            &\quad\quad\quad\quad\quad\quad\left.+\frac{\nabladm\Uijk{i}{j}{k}\cdot\nabladm\delmi\Uijk{i+1}{j}{k}+\nabladm\Uijk{i-1}{j}{k}\cdot\nabladm\delmi\Uijk{i}{j}{k}}{8}\right]_{i=0}^{\Nx}\Dy\Dz\\
            &=\sumd{j=0}{\Ny}\sumd{k=0}{\Nz}\left[\frac{\nabladpfunc{\Uijk{i+1}{j}{k}+\Uijk{i}{j}{k}}\cdot\nabladp\delpi\Uijk{i}{j}{k}+\nabladpfunc{\Uijk{i}{j}{k}+\Uijk{i-1}{j}{k}}\cdot\nabladp\delpi\Uijk{i-1}{j}{k}}{8}\right.\\
            &\quad\quad\left.+\frac{\nabladmfunc{\Uijk{i+1}{j}{k}+\Uijk{i}{j}{k}}\cdot\nabladm\delpi\Uijk{i}{j}{k}+\nabladmfunc{\Uijk{i}{j}{k}+\Uijk{i-1}{j}{k}}\cdot\nabladm\delpi\Uijk{i-1}{j}{k}}{8}\right]_{i=0}^{\Nx}\\
            &\quad\quad\quad\quad\quad\quad\quad\quad\quad\quad\quad\quad\quad\quad\quad\quad\quad\quad\quad\quad\quad\quad\quad\quad\quad\quad\quad\quad\quad\quad\quad\quad\quad\quad\quad\Dy\Dz.\\
        \end{split}
    \]
    In fact, this expression vanishes. Indeed, because of the boundary conditions, $\Uijk{\Nx+1}{j}{k}=\Uijk{\Nx-1}{j}{k}$ and $\Uijk{\Nx+2}{j}{k}=\Uijk{\Nx-2}{j}{k}$ hold. Hence, we have
    \[
        \begin{split}
            &\quad\frac{\nabladpfunc{\Uijk{\Nx+1}{j}{k}+\Uijk{\Nx}{j}{k}}\cdot\nabladp\delpi\Uijk{\Nx}{j}{k}+\nabladpfunc{\Uijk{\Nx}{j}{k}+\Uijk{\Nx-1}{j}{k}}\cdot\nabladp\delpi\Uijk{\Nx-1}{j}{k}}{8}\\
            &+\frac{\nabladmfunc{\Uijk{\Nx+1}{j}{k}+\Uijk{\Nx}{j}{k}}\cdot\nabladm\delpi\Uijk{\Nx}{j}{k}+\nabladmfunc{\Uijk{\Nx}{j}{k}+\Uijk{\Nx-1}{j}{k}}\cdot\nabladm\delpi\Uijk{\Nx-1}{j}{k}}{8}\\
            &=\frac{\delpifunc{\Uijk{\Nx+1}{j}{k}+\Uijk{\Nx}{j}{k}}\cdot\delpi\delpi\Uijk{\Nx}{j}{k}+\delpifunc{\Uijk{\Nx}{j}{k}+\Uijk{\Nx-1}{j}{k}}\cdot\delpi\delpi\Uijk{\Nx-1}{j}{k}}{8}\\
            &+\frac{\delpjfunc{\Uijk{\Nx+1}{j}{k}+\Uijk{\Nx}{j}{k}}\cdot\delpj\delpi\Uijk{\Nx}{j}{k}+\delpjfunc{\Uijk{\Nx}{j}{k}+\Uijk{\Nx-1}{j}{k}}\cdot\delpj\delpi\Uijk{\Nx-1}{j}{k}}{8}\\
            &+\frac{\delpkfunc{\Uijk{\Nx+1}{j}{k}+\Uijk{\Nx}{j}{k}}\cdot\delpk\delpi\Uijk{\Nx}{j}{k}+\delpkfunc{\Uijk{\Nx}{j}{k}+\Uijk{\Nx-1}{j}{k}}\cdot\delpk\delpi\Uijk{\Nx-1}{j}{k}}{8}\\
            &+\frac{\delmifunc{\Uijk{\Nx+1}{j}{k}+\Uijk{\Nx}{j}{k}}\cdot\delmi\delpi\Uijk{\Nx}{j}{k}+\delmifunc{\Uijk{\Nx}{j}{k}+\Uijk{\Nx-1}{j}{k}}\cdot\delmi\delpi\Uijk{\Nx-1}{j}{k}}{8}\\
            &+\frac{\delmjfunc{\Uijk{\Nx+1}{j}{k}+\Uijk{\Nx}{j}{k}}\cdot\delmj\delpi\Uijk{\Nx}{j}{k}+\delmjfunc{\Uijk{\Nx}{j}{k}+\Uijk{\Nx-1}{j}{k}}\cdot\delmj\delpi\Uijk{\Nx-1}{j}{k}}{8}\\
            &+\frac{\delmkfunc{\Uijk{\Nx+1}{j}{k}+\Uijk{\Nx}{j}{k}}\cdot\delmk\delpi\Uijk{\Nx}{j}{k}+\delmkfunc{\Uijk{\Nx}{j}{k}+\Uijk{\Nx-1}{j}{k}}\cdot\delmk\delpi\Uijk{\Nx-1}{j}{k}}{8}\\
        \end{split}
    \]
    \[
        \begin{split}
            &=\frac{-2\deli\Uijk{\Nx-1}{j}{k}\cdot\delii\Uijk{\Nx-1}{j}{k}}{8}\\
            &+\frac{-\delpjfunc{\Uijk{\Nx-1}{j}{k}+\Uijk{\Nx}{j}{k}}\cdot\delpj\delpi\Uijk{\Nx-1}{j}{k}+\delpjfunc{\Uijk{\Nx}{j}{k}+\Uijk{\Nx-1}{j}{k}}\cdot\delpj\delpi\Uijk{\Nx-1}{j}{k}}{8}\\
            &+\frac{-\delpkfunc{\Uijk{\Nx-1}{j}{k}+\Uijk{\Nx}{j}{k}}\cdot\delpk\delpi\Uijk{\Nx-1}{j}{k}+\delpkfunc{\Uijk{\Nx}{j}{k}+\Uijk{\Nx-1}{j}{k}}\cdot\delpk\delpi\Uijk{\Nx-1}{j}{k}}{8}\\
            &+\frac{2\deli\Uijk{\Nx-1}{j}{k}\cdot\delii\Uijk{\Nx-1}{j}{k}}{8}\\
            &+\frac{-\delmjfunc{\Uijk{\Nx-1}{j}{k}+\Uijk{\Nx}{j}{k}}\cdot\delmj\delpi\Uijk{\Nx-1}{j}{k}+\delmjfunc{\Uijk{\Nx}{j}{k}+\Uijk{\Nx-1}{j}{k}}\cdot\delmj\delpi\Uijk{\Nx-1}{j}{k}}{8}\\
            &+\frac{-\delmkfunc{\Uijk{\Nx-1}{j}{k}+\Uijk{\Nx}{j}{k}}\cdot\delmk\delpi\Uijk{\Nx-1}{j}{k}+\delmkfunc{\Uijk{\Nx}{j}{k}+\Uijk{\Nx-1}{j}{k}}\cdot\delmk\delpi\Uijk{\Nx-1}{j}{k}}{8}\\
            &=0
        \end{split}
    \]
    and similarly
    \[
        \begin{split}
            &\quad\frac{\nabladpfunc{\Uijk{1}{j}{k}+\Uijk{0}{j}{k}}\cdot\nabladp\delpi\Uijk{0}{j}{k}+\nabladpfunc{\Uijk{0}{j}{k}+\Uijk{-1}{j}{k}}\cdot\nabladp\delpi\Uijk{-1}{j}{k}}{8}\\
            &\quad+\frac{\nabladmfunc{\Uijk{1}{j}{k}+\Uijk{0}{j}{k}}\cdot\nabladm\delpi\Uijk{0}{j}{k}+\nabladmfunc{\Uijk{0}{j}{k}+\Uijk{-1}{j}{k}}\cdot\nabladm\delpi\Uijk{-1}{j}{k}}{8}=0.
        \end{split}
    \]
    This proves
    \[
        \productLdtwo{\deltwo\UU}{\delii\UU}=\frac{\normLdtwo{\nabladp\delpi\UU}^{2}+\normLdtwo{\nabladp\delmi\UU}^{2}+\normLdtwo{\nabladm\delpi\UU}^{2}+\normLdtwo{\nabladm\delmi\UU}^{2}}{4}.
    \]
\end{proof}
\begin{lem}
    \label{lem:norm_of_Laplacian}
    Let $\Uijk{i}{j}{k}$ be a three-dimensional grid function subject to the Neumann boundary conditions \eqref{equ:neumann_BC_U}. Then,
    \[
        \begin{split}
            \normLdtwo{\deltwo\UU}^{2}&=\normLdtwo{\delii\UU}^{2}+\normLdtwo{\deljj\UU}^{2}+\normLdtwo{\delkk\UU}^{2}\\
            &\quad+\frac{\normLdtwo{\delpi\delpj\UU}^{2}+\normLdtwo{\delpi\delmj\UU}^{2}+\normLdtwo{\delmi\delpj\UU}^{2}+\normLdtwo{\delmi\delmj\UU}^{2}}{2}\\
            &\quad+\frac{\normLdtwo{\delpj\delpk\UU}^{2}+\normLdtwo{\delpj\delmk\UU}^{2}+\normLdtwo{\delmj\delpk\UU}^{2}+\normLdtwo{\delmj\delmk\UU}^{2}}{2}\\
            &\quad+\frac{\normLdtwo{\delpi\delpk\UU}^{2}+\normLdtwo{\delpi\delmk\UU}^{2}+\normLdtwo{\delmi\delpk\UU}^{2}+\normLdtwo{\delmi\delmk\UU}^{2}}{2}.
        \end{split}
    \]
    In particular, the norms satisfy
    \[
        \normLdtwo{\DD^{2}\UU}\leq\normLdtwo{\deltwo\UU}\leq\sqrt{2}\normLdtwo{\DD^{2}\UU}.
    \]
\end{lem}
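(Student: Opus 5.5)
Expanding $\normLdtwo{\deltwo\UU}^{2}$ with $\deltwo\UU=\delii\UU+\deljj\UU+\delkk\UU$ (indeed $\delpi\delmi=\delii$) gives
\[
\normLdtwo{\deltwo\UU}^{2}=\productLdtwo{\deltwo\UU}{\delii\UU}+\productLdtwo{\deltwo\UU}{\deljj\UU}+\productLdtwo{\deltwo\UU}{\delkk\UU},
\]
so I will prove the identity by applying Lemma~\ref{lem:norm_of_like_Laplacian} to each of the three terms. The first term equals
\[
\frac{\normLdtwo{\nabladp\delpi\UU}^{2}+\normLdtwo{\nabladp\delmi\UU}^{2}+\normLdtwo{\nabladm\delpi\UU}^{2}+\normLdtwo{\nabladm\delmi\UU}^{2}}{4}.
\]
I will then expand each $\abs{\nabladp\delpi\UU}^{2}$ componentwise, obtaining $\abs{\delpi\delpi\UU}^{2}+\abs{\delpj\delpi\UU}^{2}+\abs{\delpk\delpi\UU}^{2}$, and do the same for the other three pieces.

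The pure $x$-terms are $\delpi\delpi\UU$, $\delpi\delmi\UU=\delii\UU$, $\delmi\delpi\UU=\delii\UU$ and $\delmi\delmi\UU$. Now $\delpi\delpi\Uijk{i}{j}{k}=\delii\Uijk{i+1}{j}{k}$ is a shift of $\delii\UU$, and similarly for $\delmi\delmi$. I therefore need to show that under the $\sumd{}{}$ trapezoidal weights
\[
\normLdtwo{\delpi\delpi\UU}^{2}=\normLdtwo{\delmi\delmi\UU}^{2}=\normLdtwo{\delii\UU}^{2}.
\]
This is the step I expect to be delicate. It uses the reflection relations $\Uijk{\Nx+1}{j}{k}=\Uijk{\Nx-1}{j}{k}$ and $\Uijk{\Nx+2}{j}{k}=\Uijk{\Nx-2}{j}{k}$ noted in the proof of Lemma~\ref{lem:norm_of_like_Laplacian}, together with their analogues at $i=0$. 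These make $\delii\UU$ even about the boundary indices, so shifting by one index preserves the trapezoidal sum: the half-weighted endpoint values pair up with their reflections. The shifted sum $\sumd{i=0}{\Nx}|\delii\Uijk{i+1}{j}{k}|^{2}$ runs over indices $1,\dots,\Nx+1$. By evenness the value at $\Nx+1$ equals the value at $\Nx-1$, and the missing endpoint $0$ term balances it. I will verify this carefully on a small case, since any imbalance would break the exact identity. Granting it, the $x$-contribution of the first term is $\frac{4}{4}\normLdtwo{\delii\UU}^{2}$.

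The mixed terms $\delpj\delpi$, $\delpj\delmi$, $\delmj\delpi$ and $\delmj\delmi$ (together with the $k$ analogues) each appear once with weight $\tfrac14$ in the first term. The same combinations, written with the operators in the opposite order (which commute), appear again in the $\deljj$ term. The total weight is therefore $\tfrac12$, which matches the statement. Summing the three directions gives the identity.

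Finally, comparing with the definition \eqref{equ:norm_DD} of $\normLdtwo{\DD^{2}\UU}$, the pure terms carry equal coefficients while the mixed terms carry $\tfrac12$ versus $\tfrac14$. Hence
\[
\normLdtwo{\DD^{2}\UU}^{2}\leq\normLdtwo{\deltwo\UU}^{2}\leq2\normLdtwo{\DD^{2}\UU}^{2},
\]
and taking square roots gives the claimed norm equivalence.
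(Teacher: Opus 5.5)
Your outline matches the paper's proof: split $\normLdtwo{\deltwo\UU}^{2}$ into three inner products, apply Lemma~\ref{lem:norm_of_like_Laplacian}, expand componentwise, collect the mixed terms to total weight $\frac12$, and compare with \eqref{equ:norm_DD}. The mixed-term bookkeeping and the final comparison are correct.

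The step you flagged as delicate, however, is false as stated. Under the Neumann reflections, $\normLdtwo{\delpi\delpi\UU}^{2}$ and $\normLdtwo{\delmi\delmi\UU}^{2}$ are in general \emph{not} equal to $\normLdtwo{\delii\UU}^{2}$. Fix $j,k$, set $a_{i}\coloneqq\abs{\delii\Uijk{i}{j}{k}}^{2}$, and use $a_{-1}=a_{1}$, $a_{\Nx+1}=a_{\Nx-1}$. A direct count gives
\[
\sumd{i=0}{\Nx}a_{i\pm1}-\sumd{i=0}{\Nx}a_{i}=\pm\left(\frac{a_{\Nx-1}+a_{\Nx}}{2}-\frac{a_{0}+a_{1}}{2}\right).
\]
So the forward shift drops $\frac{a_{0}+a_{1}}{2}$ at the left end and picks up $\frac{a_{\Nx-1}+a_{\Nx}}{2}$ at the right end. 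These come from opposite ends of the grid and do not cancel; your remark that ``the missing endpoint $0$ term balances'' the value at $\Nx+1$ conflates the two ends.

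A concrete case: let $\UU$ be independent of $j,k$, take $\Nx=2$, and set $(\Uijk{0}{j}{k},\Uijk{1}{j}{k},\Uijk{2}{j}{k})=(0,0,1)$. The ghost values are then $\Uijk{-1}{j}{k}=\Uijk{3}{j}{k}=\Uijk{4}{j}{k}=0$ and $\Uijk{-2}{j}{k}=1$. This gives $(a_{-1},\dots,a_{3})=(1,0,1,4,1)/(\Dx)^{4}$. The trapezoidal sums for $\delpi\delpi\UU$, $\delii\UU$, $\delmi\delmi\UU$ are $5$, $3$, $1$, up to the common factor $\Ly\Lz(\Dx)^{-3}$. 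The small-case check you planned would therefore refute the claim.

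The repair is immediate, and it is exactly what the paper does. You never need the shifted norms separately. In $\productLdtwo{\deltwo\UU}{\delii\UU}$ they occur only in the combination $\frac14\bigl(\normLdtwo{\delpi\delpi\UU}^{2}+2\normLdtwo{\delii\UU}^{2}+\normLdtwo{\delmi\delmi\UU}^{2}\bigr)$. The display above shows that the boundary defects of the forward and backward shifts are equal and opposite, so $\normLdtwo{\delpi\delpi\UU}^{2}+\normLdtwo{\delmi\delmi\UU}^{2}=2\normLdtwo{\delii\UU}^{2}$ (in the example, $5+1=2\cdot3$). This is the paper's identity
\[
\sumd{i=0}{\Nx}\frac{\abs{\delii\Uijk{i+1}{j}{k}}^{2}+2\abs{\delii\Uijk{i}{j}{k}}^{2}+\abs{\delii\Uijk{i-1}{j}{k}}^{2}}{4}=\sumd{i=0}{\Nx}\abs{\delii\Uijk{i}{j}{k}}^{2}.
\]
So, contrary to your concern, a nonzero imbalance in each individual shift does not break the exact identity. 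Replace the individual claim with this paired one and your argument is complete and coincides with the paper's.
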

\begin{proof}
    From Lemma~\ref{lem:norm_of_like_Laplacian}, it follows that
    \[
        \begin{split}
            &\quad\normLdtwo{\deltwo\UU}^{2}\\
            &=\frac{\normLdtwo{\nabladp\delpi\UU}^{2}+\normLdtwo{\nabladp\delmi\UU}^{2}+\normLdtwo{\nabladm\delpi\UU}^{2}+\normLdtwo{\nabladm\delmi\UU}^{2}}{4}\\
            &\quad+\frac{\normLdtwo{\nabladp\delpj\UU}^{2}+\normLdtwo{\nabladp\delmj\UU}^{2}+\normLdtwo{\nabladm\delpj\UU}^{2}+\normLdtwo{\nabladm\delmj\UU}^{2}}{4}\\
            &\quad+\frac{\normLdtwo{\nabladp\delpk\UU}^{2}+\normLdtwo{\nabladp\delmk\UU}^{2}+\normLdtwo{\nabladm\delpk\UU}^{2}+\normLdtwo{\nabladm\delmk\UU}^{2}}{4}\\
            &=\sumd{i=0}{\Nx}\sumd{j=0}{\Ny}\sumd{k=0}{\Nz}\frac{\abs{\delii\Uijk{i+1}{j}{k}}^{2}+2\abs{\delii\Uijk{i}{j}{k}}^{2}+\abs{\delii\Uijk{i-1}{j}{k}}^{2}}{4}\Dx\Dy\Dz\\
            &\quad+\sumd{i=0}{\Nx}\sumd{j=0}{\Ny}\sumd{k=0}{\Nz}\frac{\abs{\deljj\Uijk{i}{j+1}{k}}^{2}+2\abs{\deljj\Uijk{i}{j}{k}}^{2}+\abs{\deljj\Uijk{i}{j-1}{k}}^{2}}{4}\Dx\Dy\Dz\\
            &\quad+\sumd{i=0}{\Nx}\sumd{j=0}{\Ny}\sumd{k=0}{\Nz}\frac{\abs{\delkk\Uijk{i}{j}{k+1}}^{2}+2\abs{\delkk\Uijk{i}{j}{k}}^{2}+\abs{\delkk\Uijk{i}{j}{k-1}}^{2}}{4}\Dx\Dy\Dz\\
            &\quad+\sumd{i=0}{\Nx}\sumd{j=0}{\Ny}\sumd{k=0}{\Nz}\frac{\abs{\delpi\delpj\Uijk{i}{j}{k}}^{2}+\abs{\delpi\delmj\Uijk{i}{j}{k}}^{2}+\abs{\delmi\delpj\Uijk{i}{j}{k}}^{2}+\abs{\delmi\delmj\Uijk{i}{j}{k}}^{2}}{2}\Dx\Dy\Dz\\
            &\quad+\sumd{i=0}{\Nx}\sumd{j=0}{\Ny}\sumd{k=0}{\Nz}\frac{\abs{\delpj\delpk\Uijk{i}{j}{k}}^{2}+\abs{\delpj\delmk\Uijk{i}{j}{k}}^{2}+\abs{\delmj\delpk\Uijk{i}{j}{k}}^{2}+\abs{\delmj\delmk\Uijk{i}{j}{k}}^{2}}{2}\Dx\Dy\Dz\\
            &\quad+\sumd{i=0}{\Nx}\sumd{j=0}{\Ny}\sumd{k=0}{\Nz}\frac{\abs{\delpi\delpk\Uijk{i}{j}{k}}^{2}+\abs{\delpi\delmk\Uijk{i}{j}{k}}^{2}+\abs{\delmi\delpk\Uijk{i}{j}{k}}^{2}+\abs{\delmi\delmk\Uijk{i}{j}{k}}^{2}}{2}\Dx\Dy\Dz\\
            &=\normLdtwo{\delii\UU}^{2}+\normLdtwo{\deljj\UU}^{2}+\normLdtwo{\delkk\UU}^{2}\\
            &\quad+\frac{\normLdtwo{\delpi\delpj\UU}^{2}+\normLdtwo{\delpi\delmj\UU}^{2}+\normLdtwo{\delmi\delpj\UU}^{2}+\normLdtwo{\delmi\delmj\UU}^{2}}{2}\\
            &\quad+\frac{\normLdtwo{\delpj\delpk\UU}^{2}+\normLdtwo{\delpj\delmk\UU}^{2}+\normLdtwo{\delmj\delpk\UU}^{2}+\normLdtwo{\delmj\delmk\UU}^{2}}{2}\\
            &\quad+\frac{\normLdtwo{\delpi\delpk\UU}^{2}+\normLdtwo{\delpi\delmk\UU}^{2}+\normLdtwo{\delmi\delpk\UU}^{2}+\normLdtwo{\delmi\delmk\UU}^{2}}{2},
        \end{split}
    \]
    from which the result follows. Here, in the final step, we used the identity
    \[
        \sumd{i=0}{\Nx}\frac{\abs{\delii\Uijk{i+1}{j}{k}}^{2}+2\abs{\delii\Uijk{i}{j}{k}}^{2}+\abs{\delii\Uijk{i-1}{j}{k}}^{2}}{4}=\sumd{i=0}{\Nx}\abs{\delii\Uijk{i}{j}{k}}^{2},
    \]
    which is a consequence of $\delii\Uijk{-1}{j}{k}=\delii\Uijk{1}{j}{k}$ and $\delii\Uijk{\Nx+1}{j}{k}=\delii\Uijk{\Nx-1}{j}{k}$ obtained from the boundary conditions, and
    \[
        \begin{split}
            \sumd{j=0}{\Ny}\frac{\abs{\deljj\Uijk{i}{j+1}{k}}^{2}+2\abs{\deljj\Uijk{i}{j}{k}}^{2}+\abs{\deljj\Uijk{i}{j-1}{k}}^{2}}{4}&=\sumd{j=0}{\Ny}\abs{\deljj\Uijk{i}{j}{k}}^{2},\\
            \sumd{k=0}{\Nz}\frac{\abs{\delkk\Uijk{i}{j}{k+1}}^{2}+2\abs{\delkk\Uijk{i}{j}{k}}^{2}+\abs{\delkk\Uijk{i}{j}{k-1}}^{2}}{4}&=\sumd{k=0}{\Nz}\abs{\delkk\Uijk{i}{j}{k}}^{2},
        \end{split}
    \]
    which can be derived in the same way.
    \par Comparing with \eqref{equ:norm_DD} yields,
    \[
        \normLdtwo{\DD^{2}\UU}^{2}\leq\normLdtwo{\deltwo\UU}^{2}\leq 2\normLdtwo{\DD^{2}\UU}^{2},
    \]
    i.e.,
    \[
        \normLdtwo{\DD^{2}\UU}\leq\normLdtwo{\deltwo\UU}\leq\sqrt{2}\normLdtwo{\DD^{2}\UU}.
    \]
    This concludes the proof of the relationship between $\normLdtwo{\deltwo\UU}$ and $\normLdtwo{\DD^{2}\UU}$.
\end{proof}
\begin{proof}[Proof of Lemma~\ref{lem:Sobolev_discrete_Laplacian}]
    There exists a constant $\CC''$ such that for all three-dimensional grid functions $\fijk{i}{j}{k}$,
    \[
        \normLd{\infty}{\fijk{i}{j}{k}}\leq\CC''\sqrt{\normLdtwo{\fijk{i}{j}{k}}^{2}+\normLdtwo{\DD\fijk{i}{j}{k}}^{2}+\normLdtwo{\DD^{2}\fijk{i}{j}{k}}^{2}}
    \]
    holds; see, e.g., Corollary~9.13 in~\cite{Brezis} with a standard discretization argument. Hence, Lemma~\ref{lem:norm_of_Laplacian} yields the desired result.
\end{proof}

\end{appendices}

\bibliography{sn-bibliography}

@article{Iwade_memo_2_senko_notSH_dissipative_1,
  author		= "Wise, S. M. and Wang, C. and Lowengrub, J. S.",
  title			= "An energy-stable and convergent finite-difference scheme for the phase field crystal equation",
  journal		= "SIAM J. Numer. Anal.",
  volume		= "47",
  number		= "3",
  pages			= "2269--2288",
  year			= "2009",
  doi			= "10.1137/080738143"
}

@article{SH,
  author		= "Swift, J. and Hohenberg, P. C.",
  title			= "Hydrodynamic fluctuations at the convective instability",
  journal		= "Phys. Rev. A",
  volume		= "15",
  number		= "1",
  pages			= "319--328",
  year			= "1977",
  doi			= "10.1103/PhysRevA.15.319"
}

@article{SH_appli_1,
  author		= "Hariz, A. and Bahloul, L. and Cherbi, L. and Panajotov, K. and Clerc, M. and Ferr\'{e}, M. A. and Kostet, B. and Averlant, E. and Tlidi, M.",
  title			= "{Swift--Hohenberg} equation with third-order dispersion for optical fiber resonators",
  journal		= "Phys. Rev. A",
  volume		= "100",
  number		= "2",
  pages			= "023816",
  year			= "2019",
  doi			= "10.1103/PhysRevA.100.023816"
}

@article{SH_appli_2,
  author		= "Hutt, A. and Atay, F. M.",
  title			= "Analysis of nonlocal neural fields for both general and gamma-distributed connectivities",
  journal		= "Phys. D: Nonlinear Phenom.",
  volume		= "203",
  number		= "1--2",
  pages			= "30--54",
  year			= "2005",
  doi			= "10.1016/j.physd.2005.03.002"
}

@article{senko_1,
  author		= "Vi\~{n}als, J. and Hern\'{a}ndez-Garc\'{i}a, E. and {San Miguel}, M. and Toral, R.",
  title			= "Numerical study of the dynamical aspects of pattern selection in the stochastic {Swift--Hohenberg} equation in one dimension",
  journal		= "Phys. Rev. A",
  volume		= "44",
  number		= "2",
  pages			= "1123--1133",
  year			= "1991",
  doi			= "10.1103/PhysRevA.44.1123"
}

@article{senko_2,
  author		= "Christov, C. I. and Pontes, J.",
  title			= "Numerical scheme for {Swift--Hohenberg} equation with strict implementation of {Lyapunov} functional",
  journal		= "Math. Comput. Model.",
  volume		= "35",
  number		= "1--2",
  pages			= "87--99",
  year			= "2002",
  doi			= "10.1016/S0895-7177(01)00151-0"
}

@article{senko_2_2,
  author		= "Lee, H. G.",
  title			= "An energy stable method for the {Swift--Hohenberg} equation with quadratic--cubic nonlinearity",
  journal		= "Comput. Methods Appl. Mech. Eng.",
  volume		= "343",
  pages			= "40--51",
  year			= "2019",
  doi			= "10.1016/j.cma.2018.08.019"
}

@article{senko_2_3,
  author		= "Qi, L. and Hou, Y.",
  title			= "A Second Order Energy Stable {BDF} Numerical Scheme for the {Swift--Hohenberg} Equation",
  journal		= "J. Sci. Comput.",
  volume		= "88",
  pages			= "74",
  year			= "2021",
  doi			= "10.1007/s10915-021-01593-x"
}

@article{senko_3,
  author		= "G\'{o}mez, H. and Nogueira, X.",
  title			= "A new space-time discretization for the {Swift--Hohenberg} equation that strictly respects the {Lyapunov} functional",
  journal		= "Commun. Nonlinear Sci. Numer. Simul.",
  volume		= "17",
  number		= "12",
  pages			= "4930--4946",
  year			= "2012",
  doi			= "10.1016/j.cnsns.2012.05.018"
}

@article{senko_4,
  author		= "Zhao, X. and Yang, R. and Xue, Z. and Sun, H.",
  title			= "Energy-stable and {${L}^{2}$} norm convergent {BDF}3 scheme for the {Swift--Hohenberg} equation",
  journal		= "Numer. Methods Partial Differ. Equ.",
  volume		= "41",
  number		= "4",
  pages			= "{e}70021",
  year			= "2025",
  doi			= "10.1002/num.70021"
}

@article{senko_5,
  author		= "Yang, J. and Kim, J.",
  title			= "Numerical simulation and analysis of the {Swift--Hohenberg} equation by the stabilized {Lagrange} multiplier approach",
  journal		= "Comput. Appl. Math.",
  volume		= "41",
  number		= "1",
  pages			= "20",
  year			= "2022",
  doi			= "10.1007/s40314-021-01726-w"
}

@article{senko_FE,
  author		= "Qi, L. and Hou, Y.",
  title			= "An energy-stable second-order finite element method for the {Swift--Hohenberg} equation",
  journal		= "Comput. Appl. Math.",
  volume		= "42",
  pages			= "5",
  year			= "2023",
  doi			= "10.1007/s40314-022-02144-2"
}

@article{senko_notSH_FV_1,
  author		= "Eymard, R. and Gallou{\"{e}}t, T. and Herbin, R. and Linke, A.",
  title			= "Finite volume schemes for the biharmonic problem on general meshes",
  journal		= "Math. Comput.",
  volume		= "81",
  number		= "280",
  pages			= "2019--2048",
  year			= "2012",
  doi			= "10.1090/S0025-5718-2012-02608-1"
}

@article{senko_notSH_FV_2,
  author		= "Bailo, R. and Carrillo, J. A. and Kalliadasis, S. and Perez, S. P.",
  title			= "Unconditional Bound-Preserving and Energy-Dissipating Finite-Volume Schemes for the Cahn--Hilliard Equation",
  journal		= "Commun. Comput. Phys.",
  volume		= "34",
  number		= "3",
  pages			= "713--748",
  year			= "2023",
  doi			= "10.4208/cicp.OA-2023-0049"
}

@article{Furihata,
  author		= "Furihata, D.",
  title			= "A stable and conservative finite-difference scheme for the {Cahn--Hilliard} equation",
  journal		= "Numer. Math.",
  volume		= "87",
  pages			= "675--699",
  year			= "2001",
  doi			= "10.1007/PL00005429"
}

@article{Iwade_memo_1,
  author		= "Shen, J. and Yang, X.",
  title			= "Numerical approximations of {Allen--Cahn} and {Cahn--Hilliard} equations",
  journal		= "Discrete Contin. Dyn. Syst. -- Ser. A",
  volume		= "28",
  number		= "4",
  pages			= "1669--1691",
  year			= "2010",
  doi			= "10.3934/dcds.2010.28.1669"
}

@book{Yellow,
  author		= "Taylor, M. E.",
  title			= "Partial Differential Equations I: Basic Theory",
  address		= "New York",
  publisher		= "Springer",
  year			= "1996"
}

@book{Brezis,
  author		= "Brezis, H.",
  title			= "Functional Analysis, Sobolev Spaces and Partial Differential Equations",
  address		= "New York",
  publisher		= "Springer",
  year			= "2010",
  doi			= "10.1007/978-0-387-70914-7"
}

@article {SMSF2015,
    AUTHOR = {Sato, S. and Matsuo, T. and Suzuki, H. and Furihata, D.},
     TITLE = {A {L}yapunov-type theorem for dissipative numerical
              integrators with adaptive time-stepping},
   JOURNAL = {SIAM J. Numer. Anal.},
  FJOURNAL = {SIAM Journal on Numerical Analysis},
    VOLUME = {53},
      YEAR = {2015},
    NUMBER = {6},
     PAGES = {2505--2518},
      ISSN = {0036-1429,1095-7170},
   MRCLASS = {65P10 (65J08)},
  MRNUMBER = {3419887},
MRREVIEWER = {Luigi\ Brugnano},
       DOI = {10.1137/140996719},
       URL = {https://doi.org/10.1137/140996719},
}

@book{iwade_convex,
  author		= "Peypouquet, J.",
  title			= "Convex Optimization in Normed Spaces: Theory, Methods and Examples",
  address		= "Cham",
  publisher		= "Springer",
  year			= "2015",
  doi			= "10.1007/978-3-319-13710-0"
}

@book{bounded_gradient_sequence,
  author		= "Mawhin, J. and Willem, M.",
  title			= "Critical Point Theory and Hamiltonian Systems",
  address		= "New York",
  publisher		= "Springer",
  year			= "1989",
  doi			= "10.1007/978-1-4757-2061-7"
}

@article{CH_collision_2,
  author		= "Li, D. and Quan, C. and Tang, T.",
  title			= "Stability and convergence analysis for the implicit-explicit method to the {Cahn--Hilliard} equation",
  journal		= "Math. Comput.",
  volume		= "91",
  number		= "334",
  pages			= "785--809",
  year			= "2022",
  doi			= "10.1090/mcom/3704"
}

@article{CH_collision_1,
  author		= "Chen, L. Q. and Shen, J.",
  title			= "Applications of semi-implicit {F}ourier-spectral method to phase field equations.",
  journal		= "Comput. Phys. Commun.",
  volume		= "108",
  number		= "2--3",
  pages			= "147--158",
  year			= "1998",
  doi         = "10.1016/S0010-4655(97)00115-X"
}

@article{Barbara,
title = {On efficient semi-implicit auxiliary variable methods for the six-order {Swift--Hohenberg} model},
journal = {J. Comput. Appl. Math.},
volume = {419},
pages = {114730},
year = {2023},
doi = {10.1016/j.cam.2022.114730},
author = {Liu, Z. and Chen, C.},
}

@mastersthesis{Iwade,
  author       = {Iwade, D.},
  title        = {A Dissipative Numerical Method Based on Prox-{DCA}},
  school       = {Department of Mathematical Engineering and Information Physics, School of Engineering, The University of Tokyo},
  type         = {Bachelor's thesis},
  address      = {Tokyo, Japan},
  year         = {2025},
  note         = {In Japanese.}
}


\end{document}